\journal{}
\def\ps@pprintTitle{
\let\@oddhead\@empty\let\@evenhead\@empty\def\@oddfoot{}\def\@evenfoot{}}
\newtheorem{thm}{Theorem}[section]
\newtheorem{prop}[thm]{Proposition}
\newtheorem{lem}[thm]{Lemma}
\newtheorem{cor}[thm]{Corollary}
\theoremstyle{definition}
\newtheorem{dfn}[thm]{Definition}
\newtheorem{rem}[thm]{Remark}
\newcommand{\R}{\mathbb{R}}
\newcommand{\C}{\mathbb{C}}
\newcommand{\N}{\mathbb{N}}
\newcommand{\mean}[3]{\mathbb{E}^{#1}_{#2}\left[ #3 \right]}
\newcommand{\sle}{\mathrm{SLE}}
\newcommand{\skle}{\mathrm{SKLE}}
\newcommand{\bmd}{\mathrm{BMD}}
\newcommand{\disk}{\mathbb{D}}
\newcommand{\uhp}{\mathbb{H}}
\newcommand{\Slit}{\mathsf{Slit}}
\newcommand{\slit}{\mathbf{s}}
\DeclareMathOperator{\dist}{dist}
\DeclareMathOperator{\res}{Res}
\DeclareMathOperator{\hcap}{hcap}
\DeclareMathOperator{\diam}{diam}
\DeclareMathOperator{\ins}{ins}
\begin{document}

\begin{frontmatter}



\title{Chordal Komatu--Loewner equation \\
for a family of continuously growing hulls}


\author[kyoto]{Takuya Murayama}
\ead{murayama@math.kyoto-u.ac.jp}
\address[kyoto]{Department of Mathematics, Kyoto University, Kyoto 606-8502, Japan}

\begin{abstract}
In this paper, we discuss the chordal Komatu--Loewner equation
on standard slit domains in a manner applicable
not just to a simple curve but also a family of continuously growing hulls.
Especially a conformally invariant characterization of
the Komatu--Loewner evolution is obtained.
As an application, we prove a sort of conformal invariance, or locality,
of the stochastic Komatu--Loewner evolution
$\mathrm{SKLE}_{\sqrt{6}, -b_{\mathrm{BMD}}}$ in a fully general setting,
which solves an open problem posed by Chen, Fukushima and
Suzuki~[Stochastic Komatu--Loewner evolutions and SLEs,
Stoch.\ Proc.\ Appl.\ \textbf{127} (2017), 2068--2087].
\end{abstract}

\begin{keyword}
Komatu--Loewner equation \sep continuously growing hulls
\sep kernel convergence \sep stochastic Komatu--Loewner evolution
\sep locality

\MSC[2010] Primary 60J67 \sep Secondary 30C20, 60J70, 60H10
\end{keyword}

\end{frontmatter}


\section{Introduction}
\label{sec:intro}

The Komatu--Loewner equation is an extension of the celebrated
Loewner equation to multiply connected domains.
This equation describes the time-evolution of increasing subsets
of multiply connected domains, called growing hulls,
and was rigorously obtained in the previous studies~\cite{BF08, CFR16, CF18}
when the family of growing hulls consist of a trace of a simple curve.
In this paper, we shall give a systematic treatment of this equation
for a family of growing hulls which are not necessarily induced by a simple curve.
In order to describe mathematical details,
we begin to recall the Loewner theory briefly.
The reader can consult \cite{La05} for further detail.

We denote by $\uhp$ the upper half-plane $\{z \in \C; \Im z>0\}$.
Let $\gamma \colon [0, t_{\gamma}) \to \overline{\uhp}$ be a simple curve
with $\gamma(0) \in \partial \uhp$ and $\gamma(0, t_{\gamma}) \subset \uhp$.
For each $t \geq 0$, there exists a unique conformal map $g_t$
from $\uhp \setminus \gamma(0,t]$ onto $\uhp$
with the \emph{hydrodynamic normalization}
\[
g_t(z)=z+\frac{a_t}{z}+o(z^{-1}), \quad z \to \infty,
\]
for some constant $a_t>0$.
This is a version of Riemann's mapping theorem.
If we reparametrize $\gamma$ so that $a_t=2t$
(as mentioned later in Section~\ref{subsec:deduction}),
then we obtain the \emph{chordal Loewner equation}
\begin{equation} \label{eq:Loewner}
\frac{d}{dt}g_t(z)=\frac{2}{g_t(z)-\xi(t)}, \quad g_0(z)=z \in \uhp,
\end{equation}
where $\xi(t)=g_t(\gamma(t)):=\lim_{z \to \gamma(t)}g_t(z) \in \partial \uhp$.
We call $\xi$ the \emph{driving function} of $\{g_t\}$.

Since \eqref{eq:Loewner} is an ODE satisfying the local Lipschitz condition,
the solution $g_t(z)$ to \eqref{eq:Loewner} uniquely exists
up to its explosion time $t_z$.
If we set $F_t:=\{z \in \uhp; t_z \leq t\}$,
then $F_t$ must be the complement of the domain of definition of $g_t$,
that is, $F_t=\gamma(0, t]$.
Thus the information on the curve $\gamma$ is fully encoded into
the driving function $\xi(t)$ via the Loewner equation.
More generally, we can consider \eqref{eq:Loewner} driven by
any continuous function $\xi$.
Even in this case, the solution $g_t(z)$ defines a unique conformal map
$g_t \colon \uhp \setminus F_t \to \uhp$ with the hydrodynamic normalization,
though the resulting family $\{F_t\}$ is not necessarily a simple curve
but a family of bounded sets called growing hulls.
$\{F_t\}$, $\{g_t\}$ or the couple $(g_t, F_t)$ is called
the \emph{Loewner evolution driven by $\xi$}.
In the theory of conformal maps,
$\{g_t\}$ is usually called the \emph{Loewner chain}.

Schramm~\cite{Sc00} used the Loewner equation \eqref{eq:Loewner}
to define the \emph{stochastic Loewner evolution} (SLE).
For $\kappa>0$, $\sle_{\kappa}$ is the random Loewner evolution
driven by $\xi(t)=\sqrt{\kappa}B_t$,
where $B_t$ is the one-dimensional standard Brownian motion~(BM).
Schramm's original aim was to describe
the scaling limit of two-dimensional lattice models in statistical physics.
$\sle_{\kappa}$ was actually proven to be the scaling limit of some models
according to the value of $\kappa$.
For individual models, we refer the reader to \cite[Section~2.5]{Ka15}
and the references therein.
In addition, recent studies such as \cite{Fr10} reveal
the relation between the Loewner equation and integrable systems.
We therefore have much interest in the Loewner theory
from various points of view.

As seen, for example, from the usage of Riemann's mapping theorem above,
the simple connectivity of $\uhp$ is crucial to the Loewner theory.
Thus it is not straightforward to extend the Loewner equation
to multiply connected domains (or to Riemann surfaces).
This problem was originally proposed by Komatu~\cite{Ko50},
who obtained primary expression of corresponding equations
on special multiply connected domains.
After more than fifty years, Bauer and Friedrich~\cite{BF08} established
its definitive expression by means of the Green function and harmonic measures,
a standard way in complex analysis used by \cite{Ko50}.
Lawler~\cite{La06} then gave a probabilistic comprehension of the equation
in terms of the \emph{excursion reflected Brownian motion} (ERBM).
The idea provided in \cite{La06} was implemented by Drenning~\cite{Dr11}
later in some detail.
Motivated by \cite{BF08} and \cite{La06},
Chen, Fukushima and Rohde~\cite{CFR16} adopted the notion of
the \emph{Brownian motion with darning} (BMD)
to fill missing arguments in the existing proofs.

We now describe the framework where our domain has multiple connectivity.
Fix a positive integer $N$.
Let $C_j \subset \uhp$, $1 \leq j \leq N$, be mutually disjoint horizontal slits,
that is, segments parallel to the real axis.
We call $D:=\uhp \setminus \bigcup_{j=1}^N C_j$ a \emph{standard slit domain}.
Any $N$-connected domain is conformally equivalent to some standard slit domain.
The case of parallel slit plane, namely,
the whole plane $\C$ deleted by some parallel slits,
is typically treated in some textbooks, and in the present case 
the proof is almost the same as explained in \cite[Section~2.2]{BF08}.

Let $\gamma \colon [0, t_{\gamma}) \to \overline{D}$ be a simple curve
with $\gamma(0) \in \partial \uhp$ and $\gamma(0, t_{\gamma}) \subset D$.
For each $t \geq 0$, there exists a unique conformal map $g_t$
from $D \setminus \gamma(0,t]$ onto another standard slit domain $D_t$
with the hydrodynamic normalization.
After the same reparametrization of $a_t$,
$g_t(z)$ satisfies the \emph{chordal Komatu--Loewner equation}
(\cite[Theorem~3.1]{BF08}, \cite[Theorem~9.9]{CFR16})
\begin{equation} \label{eq:KL}
\frac{d}{dt}g_{t}(z) = -2\pi \Psi_{D_t}(g_{t}(z),\xi(t)), \quad g_{0}(z)=z \in D,
\end{equation}
where $\xi(t)=g_t(\gamma(t)) \in \partial \uhp$.
$\Psi_{D_t}(\cdot, \xi_0)$, $\xi_0 \in \R$, is the conformal map on $D_t$
defined in Section~\ref{subsec:BMD_conf}.

\begin{figure}
\centering
\includegraphics[width=12cm]{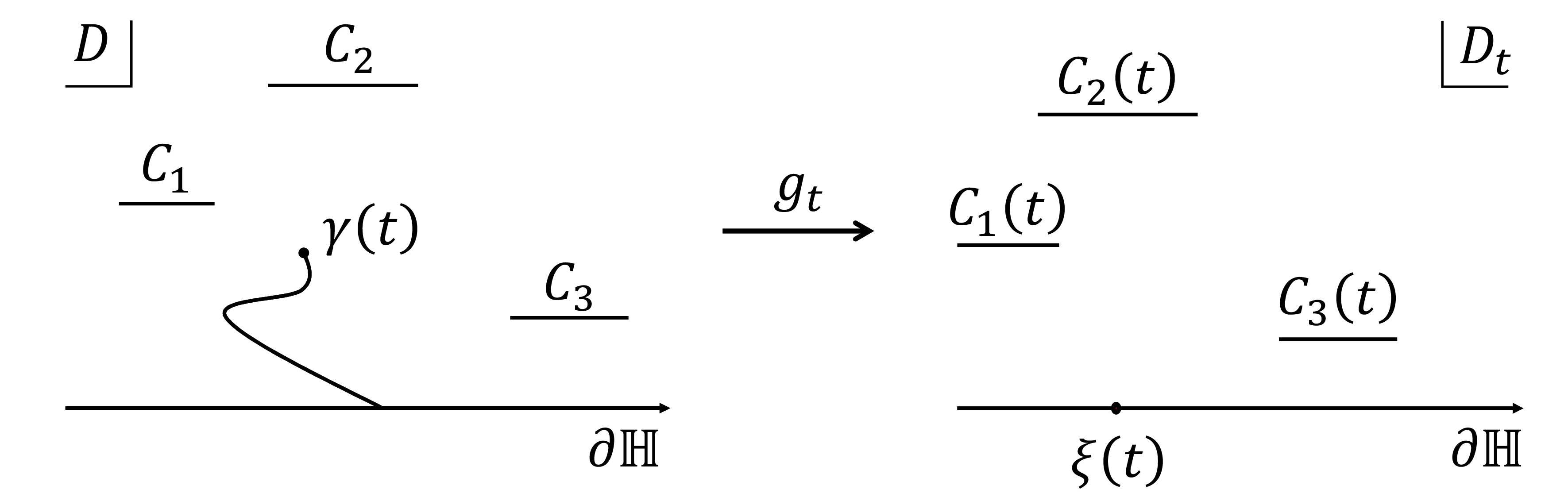}
\caption{Conformal map $g_t$}
\end{figure}

Here \eqref{eq:KL} differs from \eqref{eq:Loewner} in that
the image $D_t$ differs from $\uhp$ and varies as time passes.
Let $C_j(t)$ be the $j$-th slit of $D_t$ so that $C_j(0)=C_j$.
The left and right endpoints of $C_j(t)$ are denoted by $z_j(t)=x_j(t)+iy_j(t)$
and $z^r_j(t)=x^r_j(t)+iy_j(t)$, respectively.
These endpoints then satisfy the \emph{Komatu--Loewner equation for slits}
(\cite[Theorem~4.1]{BF08}, \cite[Theorem~2.3]{CF18})
\begin{equation} \label{eq:KLs}
\begin{split}
\frac{d}{dt}y_{j}(t) = -2\pi \Im \Psi_{D_t}(z_{j}(t),\xi(t)), \\
\frac{d}{dt}x_{j}(t) = -2\pi \Re \Psi_{D_t}(z_{j}(t),\xi(t)), \\
\frac{d}{dt}x^{r}_{j}(t) = -2\pi \Re \Psi_{D_t}(z^{r}_{j}(t),\xi(t)).
\end{split}
\end{equation}
Hence the motion of $D_t$ is described by \eqref{eq:KLs}
in terms of those of the slits $C_j(t)$.

Once we get \eqref{eq:KL} and \eqref{eq:KLs},
the initial value problem for them, as done for \eqref{eq:Loewner},
is a natural question.
Namely, for a given continuous function $\xi$,
we look for the solution to \eqref{eq:KL} and \eqref{eq:KLs}
and then obtain a family $\{F_t\}$ of growing hulls.
We shall explain the actual procedure in Section~\ref{subsec:initKL}.
As a result, \eqref{eq:KL} generates a family $\{g_t\}$
of conformal maps and $\{F_t\}$ of growing hulls.
They are called the \emph{Komatu--Loewner evolution
driven by $\xi$}.
Let us call $\{g_t\}$ the Komatu--Loewner chain as well in this paper.
In addition, Chen and Fukushima~\cite{CF18} defined
the \emph{stochastic Komatu--Loewner evolution}~(SKLE)
with the random driving function $\xi$ given by the system of SDEs
\eqref{eq:driv} and \eqref{eq:slit},
based on the discussion in \cite[Section~5]{BF08}.
Its relation to SLE was also examined by Chen, Fukushima and Suzuki~\cite{CFS17}.

In such a research on SKLE, the trouble often arises
concerning the ``transformation of the Komatu--Loewner chains.''
Here by the term ``transformation'' we mean the following situation:
Let $(g_t, F_t)$ be the Komatu--Loewner evolution in a standard slit domain $D$
and $\tilde{D}$ be another slit domain with $F_t \subset \tilde{D}$.
The degree of connectivity of $\tilde{D}$ can be different from that of $D$.
There is then a unique conformal map $\tilde{g}_t$ from $\tilde{D} \setminus F_t$
onto a slit domain with the hydrodynamic normalization
by Proposition~\ref{prop:canonical}.
We expect $(\tilde{g}_t, F_t)$ to be the Komatu--Loewner evolution in $\tilde{D}$,
that is, generated by the equation (modulo time-change).
This fact however needs proof since we have deduced the equation
only for a simple curve, not for a family of growing hulls.
From this standpoint, we can say that \cite{CFS17} established
exactly the transformation of chains with $\tilde{D}=\uhp$
by the hitting time analysis for BM.
This method is successful but not applicable to general $D$ and $\tilde{D}$,
and thus some problems mentioned in \cite[Section~5]{CFS17} remain open.

A major purpose of this paper is to settle down these circumstances.
To be more precise, we shall deduce the Komatu--Loewner equation
for a family of ``continuously'' growing hulls in Section~\ref{sec:KLeq}.
The continuity of growing hulls is introduced in Definition~\ref{dfn:cont}
via the kernel convergence of domains, which is a key concept in this paper.
In Section~\ref{subsec:kernel},
we provide a detailed description on the kernel convergence.
The continuity of hulls and the existence condition \eqref{eq:hull_limit}
of driving function prove to be a complete characteristic
of the Komatu--Loewner evolution in Theorem~\ref{thm:gKLeq}.
Our definition of the continuity is moreover independent of the domain
and conformally invariant, and thus
the chains can be transformed for any domains
(Proposition~\ref{prop:cont} and Theorem~\ref{thm:hcaptrans}).
This systematic treatment of the Komatu--Loewner equation
is our main result.
We further show that our result extends the previous results on
the \emph{locality} of chordal $\skle_{\sqrt{6}, -b_{\bmd}}$ in a full generality,
which solves an open problem in \cite[Section~5]{CFS17}.
Roughly speaking, the locality means that the distribution of
$\skle_{\sqrt{6}, -b_{\bmd}}$ is invariant modulo time-change under conformal maps.
The precise statement is given in Theorem~\ref{cor:locality}.

\section{Preliminaries}
\label{sec:prel}

First of all, let us confirm the usage of basic terms on domains and functions.
\begin{itemize}
\item $\hat{\C} := \C \cup \{\infty\}$ (the Riemann sphere).
\item $B(a, r) := \{ z \in \C; \lvert z-a \rvert < r \}$, $a \in \C$, $r>0$.
\item $\Delta(a, r) := \{ z \in \C; \lvert z-a \rvert > r \}$.
\item $\disk:=B(0, 1)$, $\disk^*:=\Delta(0,1)$.
\item $\Pi$ denotes the mirror reflection
with respect to the real axis $\partial \uhp$.
\item A non-empty set $F \subset \uhp$ is called
a \emph{(compact $\uhp$-)hull} if $F$ is bounded, $F=\uhp \cap \overline{F}$,
and $\uhp \setminus F$ is simply connected.
\end{itemize}
$\{\Delta(0, r) \cup \{\infty\}; r>0\}$ is a fundamental neighborhoods system
of $\infty$ in $\hat{\C}$.
Suppose that $D$ and $\tilde{D}$ are domains in $\hat{\C}$.
A continuous function $f \colon D \to \tilde{D}$ is said to be \emph{univalent}
if it is holomorphic (as a continuous map between two Riemann surfaces)
and injective on $D$.
If further $f$ is surjective, then it is called a \emph{conformal} map.
In other words,
$f$ is conformal if and only if it is a biholomorphic map from $D$ onto $\tilde{D}$.

\subsection{Brownian motion with darning and conformal maps on multiply connected domains}
\label{subsec:BMD_conf}

In this subsection, we summarize the properties of BMD
and some of their applications to the theory of conformal maps.
In particular, Proposition~\ref{prop:2ndorder} will be a key estimate
throughout Section~\ref{subsec:deduction}.

Fix a positive integer $N$ and a simply connected domain $E \subset \C$.
Let $A_j \subset E$, $1 \leq j \leq N$, be mutually disjoint compact continua
such that each $E \setminus A_j$ is connected.
Here, a continuum means a connected closed set consisting of more than one point.
The domain $D := E \setminus \bigcup_j A_j$ is then $N$-connected.
We ``darn'' each hole $A_j$ as follows:
Regarding each $A_j$ as one point $a^*_j$,
we define the quotient topological space $D^*$ by
$D^*:=D \cup \{a^*_1, \ldots, a^*_N\}$.
BMD $(Z^*_t, \mathbb{P}^*_z)$ is defined on $D^*$ by \cite[Definition~2.1]{CFR16}.
The harmonicity for BMD is then defined by \cite[(3.2)]{CFR16}.
The next proposition shows that the BMD-harmonicity
is a stronger condition than the usual harmonicity
for the absorbing Brownian motion~(ABM) on $D$:

\begin{prop}[{\cite{Ch12}} and {\cite[Section~3.3]{CFR16}}]
\label{prop:BMDharmonic}
The following are equivalent for a continuous function $u \colon D^* \to \R$:
\begin{enumerate}
\item \label{cond:BMDharmonic}
$u$ is BMD-harmonic on $D^*$.
\item \label{cond:conjugate}
There is a holomorphic function $f$ on $D$ whose real or imaginary part is $u$;
\end{enumerate}
In particular, a function $u$ on $D$ satisfying Condition~\eqref{cond:conjugate}
extends to a BMD-harmonic function on $D^*$
if it takes a constant limit value on each $\partial A_j$, $1 \leq j \leq N$.
\end{prop}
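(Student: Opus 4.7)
The plan is to reduce the stated equivalence to a classical fact in function theory on multiply connected domains, namely that a real harmonic function admits a single-valued harmonic conjugate if and only if all its periods around the holes vanish. The bridge between this fact and BMD-harmonicity is the characterization that a continuous function $u \colon D^* \to \R$ is BMD-harmonic if and only if it is harmonic on $D$ in the classical sense and has vanishing flux (period of the conjugate differential $*du$) around each hole $A_j$. I would first spend a paragraph extracting this characterization from the definition in \cite[(3.2)]{CFR16}: BMD acts as ABM on $D$, so BMD-harmonicity restricted to $D$ forces $u$ to be classically harmonic; the extra requirement that $u$ possesses the mean-value property at each darned point $a^*_j$ translates, via Green's identity applied to BMD-excursions entering and leaving a small annular neighborhood of $A_j$, into $\oint_{\partial A_j} \partial_n u\, ds = 0$. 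Continuity on $D^*$ forces $u$ to be constant on $\partial A_j$, which is built into the hypothesis.

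For the implication (i)$\Rightarrow$(ii) I would then argue as follows. Given $u$ BMD-harmonic, the flux of $u$ through each $\partial A_j$ vanishes, so on the $N$-connected domain $D$ the conjugate differential $*du$ is exact. Hence there exists a single-valued harmonic conjugate $v$ on $D$, and $f := u + i v$ is holomorphic on $D$ with $\Re f = u$. For (ii)$\Rightarrow$(i), I start with $f$ holomorphic on $D$, write $v := \Im f$ (or $\Re f$, depending on which case), and observe that since $f$ is single-valued on $D$ the periods of $dv = *du$ around each hole automatically vanish. Combined with the hypothesis that $u$ is continuous on $D^*$ (hence constant on each $\partial A_j$), the characterization above yields BMD-harmonicity.

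The ``in particular'' part is then a short consequence of (ii)$\Rightarrow$(i): given $u = \Re f$ with $f$ holomorphic on $D$ and with constant limit values $c_j$ on each $\partial A_j$, extend $u$ to $D^*$ by setting $u(a^*_j) := c_j$. The resulting extension is continuous on $D^*$ by the quotient topology, and the computation above shows the fluxes vanish, hence the extension is BMD-harmonic.

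The step I expect to be the most delicate is the analytic interpretation of the BMD mean-value property at the points $a^*_j$ as the vanishing of the flux $\oint_{\partial A_j} \partial_n u\, ds$. This is where one must translate the probabilistic definition of BMD (which identifies $A_j$ to a single point and prescribes an appropriate ``gluing'' of paths) into the complex-analytic language of conjugate differentials. Once that translation is secured, the rest of the argument is a straightforward application of the monodromy theorem on a finitely connected planar domain, and the ``in particular'' statement is essentially immediate.
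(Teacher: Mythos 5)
The paper does not prove this proposition; it is quoted verbatim from \cite{Ch12} and \cite[Section~3.3]{CFR16}, where the argument is exactly the one you outline: BMD-harmonicity is characterized as classical harmonicity on $D$ plus vanishing flux $\oint_{\partial A_j}\partial_n u\,ds=0$ around each hole, and the zero-period condition is precisely what makes the conjugate differential $*du$ exact on the finitely connected domain $D$, yielding a single-valued holomorphic $f$ (and conversely). Your reconstruction, including the treatment of the ``in particular'' clause, is correct and follows the same route as the cited references, so there is nothing to compare beyond noting that the one step you flag as delicate (the probabilistic-to-analytic translation of the mean-value property at $a_j^*$) is the content of \cite[Section~3.3]{CFR16} itself.
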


We define the Green function and Poisson kernel of BMD, like those for ABM.
Let $A_0$ be a hull with piecewise smooth boundary or an empty set,
$E=\uhp \setminus A_0$ and $D$ be as above.
We denote by $G^*_D$ the 0-order resolvent kernel,
or \emph{Green function} of $Z^*$.
Taking the normal derivative, we get the \emph{Poisson kernel} of $Z^*$
\[
K^*_D(z, \xi_0):= -\frac{1}{2}\frac{\partial}{\partial \mathbf{n}_{\xi_0}}G^*_D(z, \xi_0),
\]
where $\mathbf{n}_{\xi_0}$ is the outward unit normal at $\xi_0 \in \partial E$.
The kernels $G^*_D$ and $K^*_D$ can be expressed
by the classical Green function and harmonic measures.
See Sections~4 and 5 in \cite{CFR16} for their concrete expressions.
The following version of Poisson's integral formula holds for the kernel $K^*_D$:
Suppose that a BMD-harmonic function $u$ on $D^*$ vanishes at infinity,
extends continuously to $\partial D^*=\partial E$ and
has a compact support on $\partial E$.
Then by \cite[(5.5)]{CFR16} and the proof of \cite[Theorem~6.4]{CFR16},
$u$ satisfies
\begin{equation} \label{eq:Poi}
u(z)=\mean{*}{z}{u(Z^*_{\zeta^*-})}
=\int_{\partial E} u(\xi_0) K^*_D(z, \xi_0) \,\lvert d\xi_0 \rvert,
\end{equation}
where $\zeta^*$ is the lifetime of $Z^*$.
Note that the former equality holds by the maximum value principle
for BMD-harmonic functions on $D^*$
even if $\partial A_0$ is not smooth.

When $A_j$ is a horizontal slit $C_j$ for each $1 \leq j \leq N$,
we can further define the \emph{complex Poisson kernel} $\Psi_D$ of $Z^*$
by \cite[Lemma~6.1]{CFR16}.
Namely, there is a unique holomorphic function
$\Psi_D(z, \xi_0)$, $\xi_0 \in \partial E$, such that
$\Im \Psi_D(z, \xi_0) = K^*_D(z, \xi_0)$
and $\lim_{z \to \infty} \Psi_D(z, \xi_0) = 0$.
This $\Psi_D$ coincides with $\Psi$ in \cite[Section~2.2]{BF08}
by construction. If $D=\uhp$, namely, no slit is present in $D$,
then the BMD is reduced to the ABM on $\uhp$ and its complex Poisson kernel
$\Psi_{\uhp}$ becomes $-\frac{1}{\pi}\frac{1}{z-\xi_0}$
whose imaginary part is the usual Poisson kernel
$\frac{1}{\pi}\frac{y}{(x-\xi_0)^2+y^2}$ on $\uhp$.
We consider the difference between $\Psi_D$ and $\Psi_{\uhp}$
provided that $E=\uhp$.
In view of the proof of \cite[Lemma~5.6]{CF18}, the function
\begin{equation} \label{eq:PK}
\mathbf{H}_D(z, \xi_0):=\Psi_D(z, \xi_0)+\frac{1}{\pi}\frac{1}{z-\xi_0},
\quad z \in D, \xi_0 \in \partial \uhp,
\end{equation}
can be extended, for each $\xi_0 \in \partial \uhp$, to a holomorphic function
in $z \in D \cup \Pi D \cup \partial \uhp$
after making Schwarz's reflection across $\partial \uhp \setminus \{\xi_0\}$.
The extended function is denoted by $\mathbf{H}_D(z, \xi_0)$ again.
Accordingly, $\Psi_{D}(\cdot, \xi_0)$ extends to a conformal map
from $D \cup \Pi D \cup \partial \uhp \cup \{\infty\}$
onto $\tilde{D} \cup \Pi \tilde{D} \cup \partial \uhp \cup \{\infty\}$.

\begin{rem}
Concerning \eqref{eq:Poi} and the definition of $\Psi_D$,
Lemma~6.1 and Theorem~6.4 of \cite{CFR16} dealt with only the case
where $E=\uhp$.
However, we can easily check that the proof is still valid
for $E=\uhp \setminus A_0$.
Indeed, the BMD complex Poisson kernel for
$E=\uhp \setminus \{z; \lvert z \rvert \leq \varepsilon\}$
appeared in Appendix of \cite{CF18}.
\end{rem}

As an application of BMD to the theory of conformal maps,
\cite[Theorem~7.2]{CFR16} constructed the canonical conformal map
for a hull in $D$ in a probabilistic manner,
which was originally due to \cite[Corollary~3.1]{La06}.
We restate \cite[Theorem~7.2]{CFR16} in the next proposition.

\begin{prop}
\label{prop:canonical}
\begin{enumerate}

\item \label{prop:can_exist}
Let $E=\uhp$ and $D$ be as above.
Suppose that $F$ is a hull contained in $D$ or an empty set.
Then, there exists a unique pair of a standard slit domain $\tilde{D}$ and
conformal map $f_F \colon D \setminus F \to \tilde{D}$
with the hydrodynamic normalization
$\lim_{z \to \infty} (f_F(z)-z)=0$.

\item \label{prop:can_reflect}
The map $f_F$ in \eqref{prop:can_exist} can be extended to a univalent function on
$(D \setminus F) \cup \Pi(D \setminus F)
\cup (\partial \uhp \setminus \overline{F}) \cup \{\infty\}$.
This extended map is denoted by $f_F$ again
and has the following expansion around $\infty$:
\begin{equation} \label{eq:can_expand}
f_F(z)=z + \frac{c}{z} + o(z^{-1}),
\end{equation}
where $c$ is a constant which is positive if $F$ is non-empty.
\end{enumerate}
\end{prop}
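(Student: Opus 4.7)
For uniqueness in (i), I would set $\varphi := f_2 \circ f_1^{-1} \colon \tilde{D}_1 \to \tilde{D}_2$. Composing the hydrodynamic expansions at $\infty$ gives $\varphi(z) = z + O(z^{-1})$, so $\varphi$ fixes $\partial\uhp$ setwise and extends via Schwarz reflection across $\partial\uhp$ to a conformal map between the doubled domains $\tilde{D}_k \cup \Pi\tilde{D}_k \cup \partial\uhp$, $k=1,2$. Since $\varphi$ carries each horizontal slit of $\tilde{D}_1$ to a horizontal slit of $\tilde{D}_2$, the function $\Im(\varphi(z)-z)$ is bounded harmonic on the doubled domain, constant on every slit, and vanishes at $\infty$; equivalently, $\varphi'$ has real boundary values on every slit and on $\partial\uhp$. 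Iterated Schwarz reflection across the slits extends $\varphi'$ to a meromorphic function on $\hat{\C}$ that tends to $1$ at $\infty$, and Liouville plus the normalization $\varphi(z) - z \to 0$ then forces $\varphi \equiv \mathrm{id}$; this is the uniqueness part of the classical parallel slit mapping theorem.

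For existence in (i), I would follow the BMD-theoretic construction of \cite[Theorem~7.2]{CFR16}. Working in the $(N+1)$-connected domain $D \setminus F$, I darn each slit $C_j$ to a single point $c_j^*$ and consider the BMD $Z^*$ on $(D \setminus F)^*$, whose ordinary boundary is $(\partial\uhp \setminus \overline{F}) \cup (\partial F \cap \uhp)$. The imaginary part of the candidate $f_F - z$ should equal the BMD-harmonic function $h$ on $(D \setminus F)^*$ with boundary data $h = 0$ on $\partial\uhp \setminus \overline{F}$, $h = -\Im z$ on $\partial F \cap \uhp$, $h = \tilde{y}_j - y_j$ at $c_j^*$, and $h \to 0$ at $\infty$, where the $N$ unknown heights $\tilde{y}_j$ are to be determined. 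These heights are fixed by requiring that $h$ admit a single-valued harmonic conjugate on $D \setminus F$, i.e., that the periods of $*dh$ around the $N$ slits all vanish---an $N \times N$ linear system whose nondegeneracy is the content of the classical slit mapping theorem. Proposition~\ref{prop:BMDharmonic} then supplies a holomorphic $f_F - z$ on $D \setminus F$; the normalization at $\infty$ fixes the additive constant, and the argument principle shows $f_F$ is injective, so $\tilde{D} := f_F(D \setminus F)$ is the required standard slit domain.

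For (ii), since $f_F$ extends continuously to $\partial\uhp \setminus \overline{F}$ with real boundary values, Schwarz reflection yields a univalent holomorphic extension to $(D \setminus F) \cup \Pi(D \setminus F) \cup (\partial\uhp \setminus \overline{F}) \cup \{\infty\}$. The resulting symmetry $f_F(\bar{z}) = \overline{f_F(z)}$ forces the Laurent expansion at $\infty$ to have real coefficients, and the hydrodynamic normalization removes the constant and linear terms, producing \eqref{eq:can_expand} with $c \in \R$. Positivity of $c$ when $F \neq \emptyset$ follows by identifying $c$ with the half-plane capacity $\hcap(F)$: applying Green's identity to $\Im(f_F(z) - z)$ on $(D \setminus F) \cap B(0,R)$ and letting $R \to \infty$ expresses $c$ as a strictly positive boundary integral supported on $\partial F$. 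The main obstacle in the whole program is the nondegeneracy of the $N \times N$ period matrix in the existence step, which is the technical heart of the slit mapping theorem and is handled in detail in \cite{CFR16}.
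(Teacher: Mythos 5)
Your existence sketch and part (ii) are broadly in line with the paper: the paper simply cites \cite[Theorem~7.2]{CFR16} (equivalently \cite[Section~2.2]{BF08}) for existence, obtains the extension in (ii) by Schwarz reflection exactly as you do, and proves $c>0$ via the probabilistic formula \eqref{eq:hcap} from \cite[(A.20)]{CF18} (non-polarity of $F$ for the BMD); your Green's-identity route is the deterministic counterpart and is acceptable provided you note that the contributions from the slits $C_j$ vanish because $\Im(f_F(z)-z)$ is constant on each slit and has zero flux through it.

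The genuine gap is in your uniqueness argument. The step ``iterated Schwarz reflection across the slits extends $\varphi'$ to a meromorphic function on $\hat{\C}$'' does not work: reflecting across the horizontal line containing one slit maps the doubled domain onto a \emph{different} domain, the reflections in the distinct lines carrying the $2N$ slits generate an infinite group, and the reflected copies overlap both one another and the original domain, so the extensions obtained from different slits are inconsistent and no single-valued function on $\hat{\C}$ results. (One would also have to rule out square-root, i.e.\ non-polar, singularities at the slit endpoints before invoking Liouville.) This is precisely why the classical uniqueness proof takes a different route, and it is the route the paper follows (after Tsuji, Theorem~IX.23): assuming two normalized pairs $(\tilde{D},f_F)$ and $(\tilde{D}_*,f_*)$, one considers the \emph{difference} $g:=f_F-f_*$, which is holomorphic and bounded on $D\setminus F$ by the hydrodynamic normalization; the prime-end boundary correspondence (Carath\'eodory's theorem for finitely connected domains, which the paper sets up carefully) shows that $\partial g(D\setminus F)$ is contained in finitely many horizontal segments together with a subset of $\partial\uhp$, and no bounded nonempty open set can have such a boundary, so $g$ is constant and hence $\equiv 0$ by the normalization at $\infty$. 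Your composition $\varphi=f_2\circ f_1^{-1}$ could be salvaged by the same device --- $\varphi(z)-z$ is bounded with boundary values on finitely many horizontal lines --- but as written the reflection mechanism you rely on is invalid, and you also omit the boundary-correspondence input needed to know how $\varphi$ (or $g$) behaves at the boundary.
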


\begin{figure}
\centering
\includegraphics[width=12cm]{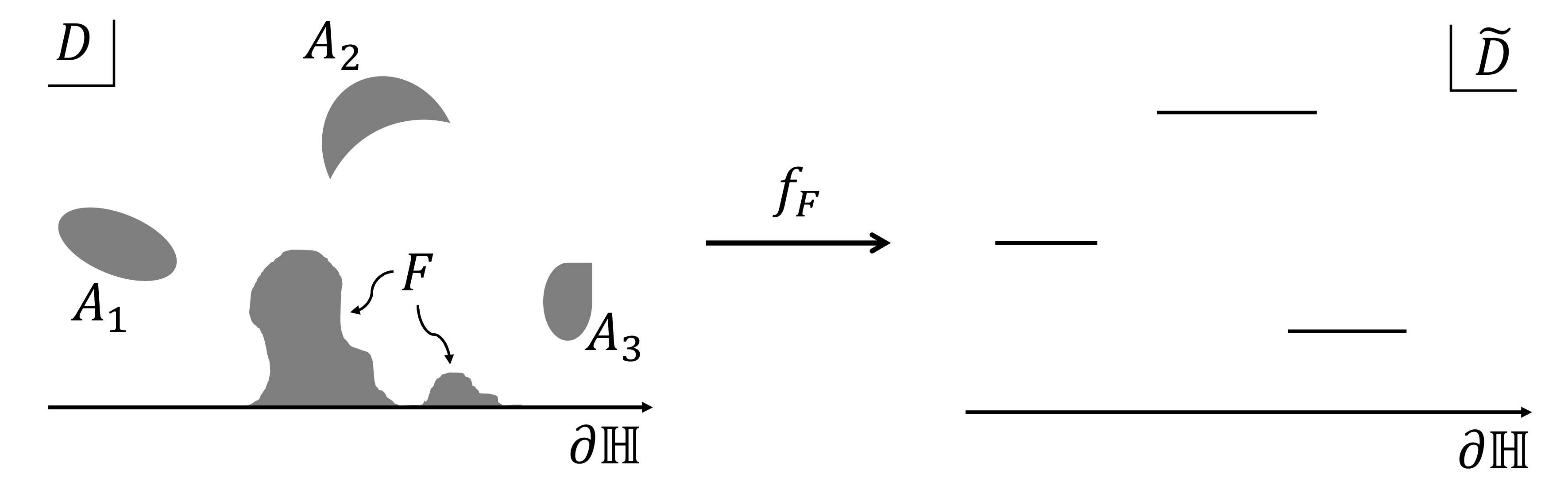}
\caption{Canonical map $f_F$}
\end{figure}

In Proposition~\ref{prop:canonical}, we refer to $f_F$
as the \emph{canonical map from $D \setminus F$ onto $\tilde{D}$}.
The constant $c$ in Proposition~\ref{prop:canonical}~\eqref{prop:can_reflect}
is called the \emph{half-plane capacity of $F$ relative to $D$}
and denoted by $\hcap^D(F)$.
Now a reader familar with the boundary behavior of conformal maps
can skip the following proof and Remark~\ref{rem:canonical},
which are somewhat lengthy due to the exposition on prime ends.

\begin{proof}[Proof of Proposition~\ref{prop:canonical}]
\eqref{prop:can_exist}
The existence of such a pair of standard slit domain $\tilde{D}$ and
conformal map $f_F$ is ensured by \cite[Theorem~7.2]{CFR16} or \cite[Section~2.2]{BF08}.
We prove the uniqueness by the same proof as in \cite[Theorem IX.23]{Ts59},
starting with the summary on the boundary correspondence
induced by conformal maps.

Let $D_0$ be a finitely connected domain.
\begin{itemize}
\item A simple curve $q$ in $\overline{D_0}$ is called a \emph{cross cut}
if both of its end points lie in a single component of $\partial D_0$,
and the other points of $q$ lie in $D_0$.
A cross cut $q$ obviously separates the domain $D_0$ into two components,
that is, $D_0 \setminus q$ consists of two connected components.
\item A sequence $\{q_n\}$ of cross cuts is called a \emph{null-chain}
if all $q_n$ are disjoint, there is a component of $D_0 \setminus q_n$
denoted by $\ins q_n$ such that $\ins q_{n+1} \subset \ins q_n$ for all $n$,
and $\diam q_n \to 0$ as $n \to \infty$.
\item Two null-chains $\{q_n\}$ and $\{q'_n\}$ is said to be \emph{equivalent}
if, for every $m$, there exists a number $n$ such that $\ins q'_n \subset \ins q_m$,
and the same condition with $q_n$ and $q'_n$ exchanged holds.
We call a equivalence class by this relation a \emph{prime end} of $D_0$.
\item $P(D_0)$ denotes the collection of all prime ends of $D_0$.
\end{itemize}
We endow a topology on $D_0 \cup P(D_0)$ as follows:
A subset $U \subset D_0 \cup P(D_0)$ is open if $U \cap D_0$ is open,
and for every prime end $p \in U \cap P(D_0)$, there exists
a null-chain $\{q_n\} \in p$ such that $\ins q_n \subset U \cap D_0$ for some $n$.
Then by definition, a sequence $\{z_m\}$ in $D_0$ converges to a prime end $p$
if and only if, for some null-chain $\{q_n\} \in p$ and each $n$, it holds that
$z_m \in \ins q_{n}$ for sufficiently large $m$.

For the standard slit domain $\tilde{D}$,
the collection of prime ends $P(\tilde{D})$ has a simple expression.
Let $\tilde{C}_j$, $j=1, \ldots, N$, be the slits of $\tilde{D}$
whose left and right end points are $\tilde{z}_j$ and $\tilde{z}^r_j$, respectively.
We use $\partial_p A$, $A \subset \C$, to denote the boundary of $A$
with respect to the path distance topology in $\C \setminus A$.
Then
$\partial_p \tilde{C}_j=\tilde{C}^+_j \cup \tilde{C}^-_j
\cup \{\tilde{z}_j, \tilde{z}^r_j\}$,
where $\tilde{C}^{\pm}_j$ are the upper and lower side of the open slit
$\tilde{C}^0_j:=\tilde{C}_j \setminus \{\tilde{z}_j, \tilde{z}^r_j\}$, respectively.
$P(\tilde{D})$ coincides with the boundary of $\tilde{D}$
in the path distance topology:
$P(\tilde{D})=\partial \uhp \cup \bigcup_j \partial_p \tilde{C}_j$.

It is well known as Carath\'eodory's theorem that, a conformal map
between two finitely connected domains $D_0$ and $D_1$
extends to a homeomorphism between $D_0 \cup P(D_0)$ and $D_1 \cup P(D_1)$.
(See \cite[Theorem~IX.1]{Ts59} or \cite[Theorem~14.3.4]{Co95}.)
Although $D_0$ and $D_1$ are originally supposed to be simply connected
in Carath\'eodory's theorem,
we can easily prove this fact even if the domains have finitely multiple connectivity,
for instance, via the proof of \cite[Theorem~15.3.4]{Co95}.
In our case, the conformal map $f_F \colon D \setminus F \to \tilde{D}$
induces a homeomorphism from
$(D \setminus F) \cup P(D \setminus F)$ onto
$\tilde{D} \cup \partial \uhp \cup \bigcup_j \partial_p \tilde{C}_j$.

Keeping this boundary correspondence in mind, we proceed to
the uniqueness of the pair $(\tilde{D}, f_F)$.
To the contrary, we assume that a pair of a standard slit domain $\tilde{D}_*$ and
conformal map $f_{\ast} \colon D \setminus F \to \tilde{D}_*$
distinct from the pair $(\tilde{D}, f_F)$ enjoys the same property.
The difference $g(z):=f_F(z)-f_*(z)$ is non-constant, holomorphic
on $D \setminus F$ and especially bounded due to the hydrodynamic normalization.
By the above correspondence, the boundary of the image $g(D)$
is written as
\[
\partial g(D)=\{ f_F(z) - f_*(z); z \in P(D \setminus F)\},
\]
which consists of finitely many parallel slits and a subset of $\partial \uhp$.
It is however impossible that such a form of boundary surrounds
a bounded domain $g(D)$, a contradiction.
Thus the uniqueness of the map $f_F$ follows.

\noindent
\eqref{prop:can_reflect}
It is obvious from definition that each point
in $\partial \uhp \setminus \overline{F}$
corresponds to a prime end in $P(D \setminus F)$.
Thus by the boundary correspondence we have
\[
\lim_{z \to \zeta_0, z \in D \setminus F} \Im f_F(z)=0,
\quad \zeta_0 \in \partial \uhp \setminus \overline{F}.
\]
The extension of $f_F$ across $\partial \uhp \setminus \overline{F}$
is now obtained from Schwarz's reflection principle.
The hydrodynamic normalization implies that
$f_F$ has the expansion \eqref{eq:can_expand}.
Finally by \cite[(A.20)]{CF18} we have
\begin{equation} \label{eq:hcap}
c=\frac{2R}{\pi}\int_0^{\pi}
\mean{*}{Re^{i\theta}}{\Im Z^*_{\sigma^*_F}; \sigma_F<\infty} \sin\theta \,d\theta
\end{equation}
for any $R >\sup\{\lvert z \rvert; z \in F \cup \bigcup_j A_j\}$.
Here $\sigma^*_F:=\inf\{t>0; Z^*_t \in F\}$, and $\mathbb{E}^*_z$ denotes
the expectation with respect to $Z^*$ starting at $z \in D^*$.
Although \cite[(A.20)]{CF18} was shown when $D$ is a standard slit domain,
it is also vaild for general $D$ as remarked immediately after \cite[(A.21)]{CF18}.
The expression~\eqref{eq:hcap} implies $c>0$ for a non-empty $F$
since $F$ is non-polar with respect to the ABM on $D$.
\end{proof}

\begin{rem} \label{rem:canonical}
If the closure of the hull $F$ is connected,
which is the case when $F$ is a trace of a simple curve,
then Proposition~\ref{prop:canonical} follows from Theorems~IX.22 and IX.23
of \cite{Ts59} as follows:
Let $D':=(D \setminus F) \cup \Pi (D \setminus F)
\cup (\partial \uhp \setminus \overline{F})$.
There exists a pair of a parallel slit plane $\tilde{D}'$ and a conformal map
$f \colon D' \to \tilde{D}'$ with the normalization
\[
f(z)=z + \frac{c}{z} + o(z^{-1}),\quad z \to \infty,
\]
for some $c \in \C$ with $\Re c>0$ by \cite[Theorem~IX.22]{Ts59}.
Clearly $\hat{f}(z):=\overline{f(\bar{z})}$ satisfies the same condition
with $c$ replaced by $\bar{c}$, and so $f=\hat{f}$ by \cite[Theorem~IX.23]{Ts59}
since $D'$ is of finite connectivity.
Thus $c=\bar{c}>0$, and
$f_F$ is obtained from the restriction of $f$ on $D \setminus F$.

The crutial point is the finite connectivity of $D'$,
which is not necessarily true when $\overline{F}$ is not connected.
Because the uniqueness theorem \cite[Theorem~IX.23]{Ts59} does not work
for the domain of infinite connectivity,
we cannot conclude that $f=\hat{f}$ in the above argument
unless the connectivity of $\overline{F}$ is assumed.
In relation with this remark, we would like to point out that,
in the proof of \cite[Theorem~7.2]{CFR16}, the image of the hull $F$
by the canonical map $\phi \colon \uhp \setminus F \to \uhp$
is stated to be a interval, which is not the case in general.
Needless to say, the proof itself is completely valid since
\cite[Theorem~11.2]{CFR16} used there does not depend on the degree
of connectivity of the domains at issue.
\end{rem}

In the rest of this subsection, $D$ is a standard slit domain
$\uhp \setminus \bigcup_{j=1}^N C_j$.
We denote by $C^{\pm}_j$ the upper and lower side of the slit
$C^0_j:=C_j \setminus \{z_j, z^r_j\}$, respectively,
where $z_j$ and $z^r_j$ are the left and right end points of $C_j$, respectively.
We set $\partial_p C_j := C^+_j \cup C^-_j \cup \{z_j, z^r_j\}$,
which is the boundary of $\uhp \setminus C_j$ in the path distance topology,
as in the proof of Proposition~\ref{prop:canonical}.

Given the canonical map $f_F$ for a hull $F \subset D$,
we can always extend it holomorphically to $\bigcup_j \partial_p C_j$
in the following sense as in \cite[Section~2]{CF18},
which will be used extensively throughout this paper:
Fix $1 \leq j \leq N$ and consider the open rectangles
\begin{gather*}
R_+:=\{z \in \C ;\; x_j < \Re z < x^r_j,\; y_j < \Im z <y_j + \delta\}, \\
R_-:=\{z \in \C ;\; x_j < \Re z < x^r_j,\; y_j - \delta < \Im z <y_j\},
\end{gather*}
and $R:=R_+ \cup C^0_j \cup R_-$,
where $\delta>0$ is taken so small that $R_+ \cup R_- \subset D \setminus F$.
Since $\Im f_F$ takes a constant value on the slit $C_j$
by the boundary correspondence, $f_F$ extends to a holomorphic function $f_F^+$
from $R_+$ to $R$ across $C^0_j$ by Schwarz's reflection.
The extension $f_F^-$ of $f_F|_{R_-}$ across $C^0_j$ is defined in the same way.
As for the extension of $f_F$ on the left end point $z_j$,
we take $\varepsilon>0$ so small that it is less than one-half of the length of $C_j$
and that $B(z_j, \varepsilon) \setminus C_j \subset D \setminus F$.
Then $\psi(z):=(z-z_j)^{1/2}$ maps $B(z_j, \varepsilon) \setminus C_j$
conformally onto $B(0, \sqrt{\varepsilon}) \cap \uhp$, and
$f^l_F:=f_F \circ \psi^{-1}$ extends holomorphically to $B(0, \sqrt{\varepsilon})$
by Schwarz's reflection.
We can also construct the holomorphic function $f^r_F$,
the extension of $f_F$ on the right end point $z^r_j$.
Note that, by the proof of \cite[Lemma~6.1]{CFR16},
the BMD complex Poisson kernel $\Psi_D(z, \xi_0)$
extends holomorphically to $\bigcup_j \partial_p C_j$ 
for each $\xi_0 \in \partial \uhp$ in the same manner.

The canonical map $f_F$ so extended has the following important estimate,
which was originally formulated in \cite[Proposition~6.12]{Dr11} in terms of ERBM:

\begin{prop} \label{prop:2ndorder}
Let $D$ be a standard slit domain.
Suppose that $\xi_0 \in \partial \uhp$ and that
$r_0>0$ satisfies $B(\xi_0, r_0) \cap \uhp \subset D$.
Then for any hull $F \subset D$ with $r:=\inf\{R>0; F \subset B(\xi_0,R)\} \leq r_0$,
it holds that, for all $z \in D \cup \bigcup_j \partial_p C_j$
with $\lvert z-\xi_0 \rvert>r$,
\begin{equation} \label{eq:2ndorder}
\left\lvert z-f_F(z)-\pi\hcap^D(F)\Psi_D(z,\xi_0) \right\rvert \leq C(z)r\hcap^D(F).
\end{equation}
Here $C(z)=C_{D,\xi_0,r_0}(z)>0$ is a locally bounded function depending
only on $D$, $\xi_0$ and $r_0$.
\end{prop}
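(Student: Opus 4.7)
Write $c := \hcap^D(F)$ and $h(z) := z - f_F(z) - \pi c\,\Psi_D(z,\xi_0)$. The plan is to view $\Im h$ as a bounded BMD-harmonic function on a suitable subset of $D^*$ and to extract the factor $r$ from the smallness $F\subset B(\xi_0,r)$ via a near/far decomposition of the underlying BMD expectation along a mesoscopic semicircle around $\xi_0$.

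For the regularity, by the Schwarz reflection recipes following Proposition~\ref{prop:canonical} and in the construction of $\Psi_D$ from \cite[Lemma~6.1]{CFR16}, both $f_F$ and $\Psi_D(\cdot,\xi_0)$ extend holomorphically across $\partial\uhp\setminus\overline{F\cup\{\xi_0\}}$ and across each $\partial_p C_j$; moreover $\Im f_F$ and $K^*_D(\cdot,\xi_0)=\Im\Psi_D(\cdot,\xi_0)$ are constant on each slit of $D$. By Proposition~\ref{prop:BMDharmonic}, $v := \Im h$ is therefore BMD-harmonic on $D^*\setminus(F\cup\{\xi_0\})$, vanishes on $\partial\uhp\setminus\overline{F\cup\{\xi_0\}}$ and at infinity. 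Expanding $z-f_F(z)=c/z+O(|z|^{-2})$ from~\eqref{eq:can_expand} and $\pi c\Psi_D(z,\xi_0)=c/z+O(|z|^{-2})$ from~\eqref{eq:PK} and the asymptotics of $\mathbf{H}_D$, one sees that the leading $1/z$-terms cancel, and a refinement using $F\subset B(\xi_0,r)$ gives $h(z)=O(rc/|z|^2)$ at infinity; this refined matching is the precise content of the capacity identity~\eqref{eq:hcap}.

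The key step is the near/far decomposition. Fix $r'\in(r,r_0)$ and set $\Gamma := \partial B(\xi_0,r')\cap\uhp$, $\tau := \inf\{t>0:Z^*_t\in\Gamma\}$. For $z \in D$ with $|z-\xi_0|>r'$, since $F\subset B(\xi_0,r')$, any BMD trajectory from $z$ hitting $F$ must first cross $\Gamma$, so by the strong Markov property
\[
v(z) = \mean{*}{z}{v(Z^*_\tau);\;\tau<\zeta^*}.
\]
The crucial estimate is $|v(w)|\le C_1\,r\,c$ uniformly for $w\in\Gamma$: both $\Im(w-f_F(w))$ and $\pi c K^*_D(w,\xi_0)$ are of size $O(c/r')$ there, while their difference is $O(rc/r'^2)$ since the $O(c)$-leading terms of their respective expansions about $\xi_0$ match, again by~\eqref{eq:hcap}. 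Integrating over the hitting distribution of $\Gamma$ (which is uniformly bounded for $z$ in a compact away from $\overline{F}\cup\{\xi_0\}$) gives $|v(z)|\le C(z)\,r\,c$ for $z$ outside $B(\xi_0,r')$; the case of $z$ in the annulus $\{r < |z-\xi_0| \le r'\}$ or on $\bigcup_j\partial_p C_j$ is handled by the same argument with a smaller choice of $r'$ or by a local version of the estimate on the reflected domain.

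Finally, to convert the bound on $|\Im h|$ into one on $|h|$, the reflection identity $h(\Pi z) = \overline{h(z)}$ implied by the Schwarz extensions, together with Cauchy's integral formula on a small contour around $z$ in the reflected domain $(D\cup\Pi D\cup\partial\uhp)\setminus(\overline{F}\cup\{\xi_0\})$, gives $|h(z)|\le C(z)\,r\,c$ with $C(z)$ depending only on $D$, $\xi_0$, $r_0$. The principal obstacle is the near-field estimate on $\Gamma$: naive pointwise bounds on $\partial F$ fail because $K^*_D(\cdot,\xi_0)$ is singular at $\xi_0$, and the desired $O(rc)$ bound (rather than the individual order $O(c/r')$ of each summand) on $\Gamma$ hinges on the cancellation between $\Im(w-f_F(w))$ and $\pi c K^*_D(w,\xi_0)$ encoded in the capacity identity~\eqref{eq:hcap}. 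The mesoscopic semicircle $\Gamma$, at uniform positive distance from both $F$ and $\xi_0$, is precisely where this cancellation becomes quantitatively tractable.
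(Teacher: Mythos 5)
Your overall architecture matches the paper's: set $h(z)=z-f_F(z)-\pi\hcap^D(F)\Psi_D(z,\xi_0)$, bound $v=\Im h$ by a strong Markov decomposition at a semicircle around $\xi_0$, then upgrade to a bound on $h$. But the step you yourself flag as crucial is a genuine gap. You assert that $|v(w)|\le C_1 r\,\hcap^D(F)$ on the mesoscopic semicircle $\Gamma=\partial B(\xi_0,r')\cap\uhp$ because ``the $O(c)$-leading terms of the expansions about $\xi_0$ match, again by~\eqref{eq:hcap}.'' This is circular: that bound \emph{is} the conclusion of the proposition (for the imaginary part) at the points of $\Gamma$, and \eqref{eq:hcap} cannot deliver it --- it is a macroscopic identity on a circle of radius $R$ enclosing $F$ \emph{and all the slits}, not a local expansion near $\xi_0$ with a quantified $O(r)$ error. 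Your suggestion to handle the annulus $r<|z-\xi_0|\le r'$ ``by the same argument with a smaller choice of $r'$'' only pushes the problem down in scale without ever resolving it. The actual source of the cancellation in the paper is different: one applies the strong Markov property at the hitting time of $\overline{B(\xi_0,r)}$ itself, writes $\Im(z-f_F(z))=r\int_0^\pi v_0(re^{i\theta})K^*_{D_r}(z,re^{i\theta})\,d\theta$, and then invokes two quantitative inputs from the Appendix of \cite{CF18}: the factorization $K^*_{D_r}(z,re^{i\theta})=2K^*_D(z,\xi_0)(1+O(r))\sin\theta$ of the hitting density of the small semicircle, and the refined capacity formula $\hcap^D(F)=\frac{2r}{\pi}(1+O(r))\int_0^\pi v_0(re^{i\theta})\sin\theta\,d\theta$. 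The product of these two $(1+O(r))$ factors is where the factor $r$ in \eqref{eq:2ndorder} comes from; without them your estimate has no engine.

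A second, more repairable gap is the final upgrade from $|\Im h|\le Crc$ to $|h|\le Crc$. The reflection identity plus ``Cauchy's integral formula on a small contour around $z$'' does not do this: Cauchy's formula needs $|h|$ on the contour, and a bound on $\Im h$ in a disk controls only the \emph{derivatives} of $\Re h$ there (via interior estimates for harmonic functions and Cauchy--Riemann), not its value. You need an anchor point. The paper supplies one by using $\Re h(z)\to 0$ as $z\to\infty$ to write $\Re h(iL)=-\int_L^\infty v_x(iy)\,dy$ with $|v_x(iy)|\lesssim rc/y^2$, and then bounds $h(z)-h(iL)=\int_{\Gamma_z}h'(w)\,dw$ along a path avoiding $\overline{B(\xi_0,r)}$, with $|h'|$ controlled by the derivative estimate applied to $v$. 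Your second paragraph does note the decay of $h$ at infinity, so you have the right ingredient in hand, but the argument as written in your last paragraph does not assemble it correctly.
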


Proposition~\ref{prop:2ndorder} is a generalization of \cite[Lemma~2.7]{LSW01}
and \cite[Proposition~3.46]{La05} for the upper half-plane
toward the standard slit domain.
Drenning~\cite{Dr11} used it to obtain the Komatu--Loewner equation
for a simple curve in the right derivative sense.
He then discussed the left differentiability by some probabilistic methods
based on the fact that the hull at issue was a simple curve.
In Section~\ref{sec:KLeq}, we also establish the right differentiability
by Proposition~\ref{prop:2ndorder} as he did,
but the subsequent argument is completely different.
We employ the kernel convergence condition instead of his methods
to examine the left differentiability for a family of ``continuously'' growing hulls.

In what follows, we give a complete proof
of Proposition~\ref{prop:2ndorder} by making use of BMD instead of ERBM.
We first quote some estimates on BMD from Appendix of \cite{CF18}.
Let $D$, $r_0$, $F$ and $r$ be as in the assumption
of Proposition~\ref{prop:2ndorder}.
By horizontal translation, we may and do assume $\xi_0=0$
without loss of generality.
Let $D_{\varepsilon}:=D \setminus \overline{B(0,\varepsilon)}$.
By \cite[Proposition~A.2]{CF18}, there is a function $c(z, \theta)$
uniformly bounded in $\lvert z \rvert>r_0$ and $0 \leq \theta \leq \pi$
such that
\begin{equation} \label{eq:BMDest1}
K^*_{D_{\varepsilon}}(z, \varepsilon e^{i\theta})
=2K^*_D(z,0)(1+c(z,\theta)\varepsilon)\sin\theta
\end{equation}
for $\lvert z \rvert>r_0$, $0<\varepsilon<r_0$ and $0 \leq \theta \leq \pi$.
Clearly \eqref{eq:BMDest1} still holds for $z \in \bigcup_j \partial_p C_j$
since $K^*_D(z, \xi_0)$ extends harmonically
as the imaginary part of $\Psi_D(z, \xi_0)$.
By \cite[(A.22) and (A.23)]{CF18},
\begin{equation} \label{eq:BMDest2}
\hcap^D(F)=\frac{2r}{\pi}(1+c'(z,\theta)r)\int_0^{\pi}
\mean{*}{re^{i\theta}}{\Im Z^*_{\sigma^*_F}; \sigma^*_F<\infty}\sin\theta \,d\theta,
\end{equation}
where $c'(z,\theta)$ is a uniformly bounded function in $z$ and $\theta$.

Though it is irrelevant to BMD and rather standard, we remark the following:

\begin{lem}[cf. {\cite[Exercise~2.17]{La05}}] \label{lem:hbound}
Let $n \in \N$ and $u$ be a bounded harmonic function on a domain $V$.
Then, every derivative of $u$ of order $n$ is bounded by
$c(n)\dist(z,\partial V)^{-n}\lVert u \rVert_{\infty}$ for some constant $c(n)$.
\end{lem}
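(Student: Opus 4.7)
The plan is to treat this as a standard consequence of the mean value property for harmonic functions, promoting the bound from first-order to higher-order derivatives by induction on $n$ while shrinking the radius of the ball on which each step is carried out. Fix $z \in V$ and set $R := \dist(z, \partial V)$, so that $u$ is harmonic and bounded on $B(z, R) \subset V$.

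The base case $n = 1$ is the only genuinely analytic input. Since any partial derivative $\partial_j u$ ($j \in \{1,2\}$) is itself harmonic on $B(z,R)$, the two-dimensional mean value identity combined with the divergence theorem yields
\begin{equation*}
\partial_j u(z)
= \frac{1}{\pi R^2}\int_{B(z,R)}\partial_j u(w)\,dA(w)
= \frac{1}{\pi R^2}\int_{\partial B(z,R)}u(\zeta)\,\nu_j(\zeta)\,ds(\zeta),
\end{equation*}
where $\nu$ denotes the outward unit normal to $\partial B(z,R)$. Hence $\lvert \partial_j u(z) \rvert \leq 2R^{-1}\lVert u \rVert_\infty$, and one may take $c(1) = 2$.

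For the inductive step, suppose the bound is known for all derivatives of order $n-1$, with constant $c(n-1)$. Writing $\partial^\alpha = \partial_j\partial^{\alpha'}$ with $\lvert\alpha'\rvert = n-1$, apply the inductive hypothesis with the same ambient domain $V$ to bound $\partial^{\alpha'}u$ uniformly on $B(z, (1-1/n)R)$, on which every point has distance at least $R/n$ to $\partial V$; this yields $\lvert \partial^{\alpha'}u(w) \rvert \leq c(n-1)(R/n)^{-(n-1)}\lVert u \rVert_\infty$ throughout that ball. Since $\partial^{\alpha'}u$ is itself harmonic, apply the base case to $\partial^{\alpha'}u$ on the strictly smaller ball $B(z, R/n)$ to conclude
\begin{equation*}
\lvert \partial^\alpha u(z) \rvert \leq \frac{2n}{R}\cdot c(n-1)\left(\frac{R}{n}\right)^{-(n-1)}\lVert u \rVert_\infty = \frac{c(n)}{R^n}\lVert u \rVert_\infty,
\end{equation*}
with $c(n) := 2\,n^n\,c(n-1)$, a constant depending only on $n$.

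No step of this plan presents a real obstacle, since the argument is essentially a mechanical iteration of the mean value property. The only point needing minor attention is the ball-shrinking schedule: at each inductive level, the smaller ball on which the base case is invoked must lie strictly inside the larger ball on which the uniform bound on the lower-order derivative has been secured, so one must verify that the choice of radius $R/n$ is compatible both with the distance-to-boundary estimate and with the inclusion $B(z, R/n) \subset B(z, (1-1/n)R)$, which holds for $n \geq 2$.
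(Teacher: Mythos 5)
Your argument is correct in substance. The paper itself offers no proof of this lemma---it is stated as a standard fact with a pointer to \cite[Exercise~2.17]{La05}---so there is no in-text argument to compare against; what you have written is precisely the standard gradient-estimate-plus-induction proof that the citation has in mind. The base case (mean value property for the harmonic function $\partial_j u$, converted to a boundary integral by the divergence theorem, giving $c(1)=2$) and the inductive step (uniform bound on the order-$(n-1)$ derivatives on $B(z,(1-1/n)R)$, then the base case applied to the harmonic function $\partial^{\alpha'}u$ on the smaller ball) are both sound, and your bookkeeping $c(n)=2n^n c(n-1)$ yields a constant depending only on $n$, which is all the lemma asks for. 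The one point you should tidy up is the base case as written: $u$ need not extend continuously to $\overline{B(z,R)}$ when $R=\dist(z,\partial V)$, so the divergence theorem cannot be applied directly on $B(z,R)$. Carry out the identity on $B(z,r)$ for $r<R$, obtaining $\lvert\partial_j u(z)\rvert\leq 2r^{-1}\lVert u\rVert_\infty$, and let $r\uparrow R$. With that adjustment the proof is complete.
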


\begin{proof}[Proof of Proposition~\ref{prop:2ndorder}]
Let
\begin{gather*}
h(z):=z-f_F(z)-\pi\hcap^D(F)\Psi_D(z,0), \\
v(z):=\Im h(z)=\Im(z-f_F(z))-\pi\hcap^D(F)K^*_D(z,0).
\end{gather*}
Just as in the proof of \cite[Theorem~A.1]{CF18}, we have
\[
\Im(z-f_F(z)) = \mean{*}{z}{\Im Z^*_{\sigma^*_F}; \sigma^*_F<\infty}.
\]
Denote the right hand side by $v_0(z)$.
From the strong Markov property of $Z^*$, \eqref{eq:Poi},
\eqref{eq:BMDest1} and \eqref{eq:BMDest2}, we obtain,
for $z \in D \cup \bigcup_j \partial_p C_j$ with $\lvert z \rvert > r$,
\begin{align*}
\Im(z-f_F(z)) &= \mean{*}{z}{v_0\left(Z^*_{\sigma^*_{\overline{B(0,r)}}}\right)}
= r\int_0^{\pi} v_0(re^{i\theta})K^*_{D_r}(z, re^{i\theta})\,d\theta \\
&= 2rK^*_D(z,0)\int_0^{\pi}v_0(re^{i\theta})\sin\theta\, d\theta(1+O(r)) \\
&= \pi\hcap^D(F)K^*_D(z,0)(1+O(r)).
\end{align*}
Hence for some $M_1$,
\begin{equation} \label{eq:imbound}
\lvert v(z) \rvert \leq rM_1\hcap^D(F)K^*_D(z,0),
\quad z \in D \cup \bigcup_j \partial_p C_j.
\end{equation}

We now fix $L>r_0$ such that $\bigcup_j C_j \subset B(0,L)$.
Let $\Gamma_z$ be a curve from $iL$ to $z$ in $D \setminus \overline{B(0,r)}$.
Then, $h(z)$ is given by
\begin{equation} \label{eq:lineint}
h(z)=h(iL)+\int_{\Gamma_z}h'(w)\,dw,
\quad z \in D \cup \bigcup_j \partial_p C_j.
\end{equation}
Further by \eqref{eq:imbound}, Lemma \ref{lem:hbound} and
the Cauchy--Riemann equation, we have
\begin{equation} \label{eq:upbound1}
\lvert h'(w) \rvert \leq \frac{M_2\sup_{z' \in \mathcal{N}_{\Gamma_z}} K^*_D(z',0)}
{\dist(\Gamma_z, \partial D)}r\hcap^D(F), \quad w \in \Gamma_z,
\end{equation}
for some constant $M_2$.
$\mathcal{N}_{\Gamma_z}$ is an appropriate neighborhood of $\Gamma_z$.
We describe how to choose it later.
Combining \eqref{eq:upbound1} with \eqref{eq:lineint} yields that
\begin{equation} \label{eq:upbound2}
\lvert h(z) \rvert \leq \lvert h(iL) \rvert
+ \frac{M_2\lvert \Gamma_z \rvert \sup_{z' \in \mathcal{N}_{\Gamma_z}} K^*_D(z',0)}
	{\dist(\Gamma_z, \partial D)}r\hcap^D(F),
\end{equation}
where $\lvert \Gamma_z \rvert$ denotes the length of $\Gamma_z$.

It remains to estimate $\lvert h(iL) \rvert$.
By (A.21) and (A.23) of \cite{CF18},
\[
K^*_D(z,0)=\frac{1}{\pi}\frac{\Im z}{\lvert z \rvert^2}
	+ O\left(\frac{1}{\lvert z \rvert^2}\right),
\]
so that
\begin{equation} \label{eq:Poibound}
K^*_D(z,0) \leq \frac{M_3}{\Im z}, \quad \lvert z \rvert >L.
\end{equation}
Since $v$ is harmonic on $B_y:=\{z \in \C; \lvert z-iy \rvert < y/2\} (\subset D)$
for $y>2L$, it follows from Lemma~\ref{lem:hbound}, \eqref{eq:imbound}
and \eqref{eq:Poibound} that
\[
\lvert v_x(iy) \rvert \leq \frac{2c(1)}{y}\sup_{z \in B_y}\lvert v(z) \rvert
\leq 4c(1)M_1M_3r\hcap^D(F) \cdot \frac{1}{y^2}, \quad y>2L.
\]
Now note that, for $u:=\Re h$, $\lim_{z \to \infty} u(z)=0$
by the properties of $f_F$ and $\Psi_D$.
Consequently by the Cauchy-Riemann equation we have
\begin{equation} \label{eq:rebound}
\lvert u(iL) \rvert \leq \int_L^{\infty}\lvert v_x(iy) \rvert\,dy \leq M_4r\hcap^D(F).
\end{equation}
Here $M_3$ and $M_4$ are constants.

We finally set
\begin{equation} \label{eq:2ndconst}
C(z)=C_{D, 0, r_0}(z)
:=\frac{M_2\lvert \Gamma_z \rvert \sup_{z' \in \mathcal{N}_{\Gamma_z}} K^*_D(z',0)}
{\dist(\Gamma_z, \partial D)}+M_4+M_1K^*_D(iL, 0).
\end{equation}
By choosing appropriate $\Gamma_z$ and $\mathcal{N}_{\Gamma_z}$,
we can take $C$ as a locally bounded function independent of $F$ and $r$.
Thus, \eqref{eq:upbound2}, \eqref{eq:rebound} and \eqref{eq:imbound}
lead us to the desired conclusion.
\end{proof}

Note that \eqref{eq:2ndorder} still holds
with $f_F$ replaced by the extended map $f^+_F$ or other extensions,
since one may define $C(z)$ by taking some appropriate reflection.

\subsection{Initial value problem for the Komatu--Loewner equation}
\label{subsec:initKL}

In this subsection, we describe how one obtains a family of growing hulls
from the initial value problem for the Komatu--Loewner equation.

Fix $N \in \N$ and
let $C_j \subset \uhp$, $1 \leq j \leq N$ be mutually disjoint horizontal slits.
We denote the left and right endpoints of the $j$-th slit $C_j$ by
$z_j=x_j+iy_j$ and $z^r_j = x^r_j+iy_j$, respectively.
Then, the $N$-tuple $(C_j; 1 \leq j \leq N)$ of the slits
are identified with an element
$\slit=(y_1, \ldots, y_N, x_1, \ldots, x_N, x^r_1, \ldots, x^r_N)$ in $\R^{3N}$.
We define the open subset $\Slit$ of $\R^{3N}$ consisting of all such elements by
\begin{align*}
\Slit:=&\{\slit=(y_1, \ldots, y_N, x_1, \ldots, x_N, x^r_1, \ldots, x^r_N) \in \R^{3N} \\
&; y_j > 0, x_j < x^r_j, \ \text{either $x_j < x^r_k$ or $x_k < x^r_j$
whenever $y_j=y_k$, $j \neq k$}\}.
\end{align*}
We denote by $C_j(\slit)$ (resp.\! $D(\slit)$) the $j$-th slit
(resp.\! the standard slit domain) corresponding to $\slit \in \Slit$.
$\Psi_{\slit}:=\Psi_{D(\slit)}$ is the BMD complex Poisson kernel of $D(\slit)$.

For $\xi_0 \in \R$ and $\slit \in \Slit$, we put
\[
b_l(\xi_0, \slit):=\left\{ \begin{split}
&-2\pi \Im\Psi_{\slit}(z_l, \xi_0), &\quad&1 \leq l \leq N, \\
&-2\pi \Re\Psi_{\slit}(z_{l-N}, \xi_0), &\quad&N+1 \leq l \leq 2N, \\
&-2\pi \Re\Psi_{\slit}(z^r_{l-2N}, \xi_0), &\quad&2N+1 \leq l \leq 3N,
\end{split} \right.
\]
where $z_j$ and $z^r_j$, $1 \leq j \leq N$, are the left and right endpoints
of the $j$-th slit $C_j(\slit)$, respectively.
The function $b_l$, $1 \leq l \leq 3N$, has an \emph{invariance
under horizontal translations}, that is,
\[
b_l(\xi_0, \slit) = b_l(0, \slit -\widehat{\xi_0}),
\]
where $\widehat{\xi_0}$ denotes the vector in $\R^{3N}$
whose first $N$ entries are zero and last $2N$ entries are $\xi_0$.
(\cite{CF18} called this property the homogeneity in $x$-direction.)
We can easily check this invariance since
$\Psi_{\slit}(z, \xi_0)=\Psi_{\slit-\widehat{\xi_0}}(z-\xi_0, 0)$.

The Komatu--Loewner equation for slits~\eqref{eq:KLs} is now written as
\begin{equation} \label{eq:KLs2}
\frac{d}{dt}\slit_l(t)=b_l(\xi(t), \slit(t)), \quad 1 \leq l \leq 3N,
\end{equation}
where $\slit_l(t)$ is the $l$-th entry of $\slit(t)$.
Since $b_l$ is locally Lipschitz on $\R \times \Slit$ for each $l$
by \cite[Lemma~4.1]{CF18},
\eqref{eq:KLs2} is solved up to its explosion time $\zeta$.
Here we note that Condition~(L) on a function $f \colon \Slit \to \R$
appearing in \cite[Lemma~4.1]{CF18}
is equivalent to each of the following conditions:
\begin{itemize}
\item the local Lipschitz continuity of $f(\slit)$ in $\slit \in \Slit$,
\item the local Lipschitz continuity of $f(\slit - \widehat{\xi_0})$
in $(\xi_0, \slit) \in \R \times \Slit$.
\end{itemize}
Therefore we simply say that $f$ is locally Lipschitz
if one of these conditions holds.

In this context, we introduce a few more notations.
For a function $f \colon \Slit \to \C$,
we denote $f(\slit - \widehat{\xi_0})$ by $f(\xi_0, \slit)$
and regard it as a function on $\R \times \Slit$
with the invariance under horizontal translations.
Conversely, for a function $\tilde{f} \colon \R \times \Slit \to \C$
with the invariance $\tilde{f}(\xi_0, \slit)=\tilde{f}(0, \slit-\widehat{\xi_0})$,
we denote $\tilde{f}(0, \slit)$ by $\tilde{f}(\slit)$
and regard it as a function on $\Slit$.

Returning to the initial value problem, we set $D_t:=D(\slit(t))$
for the solution $\slit(t)$, $0 \leq t < \zeta$, to \eqref{eq:KLs2}.
The Komatu--Loewner equation~\eqref{eq:KL} is written as
\begin{equation} \label{eq:KL2}
\frac{d}{dt}g_{t}(z) = -2\pi \Psi_{\slit(t)}(g_{t}(z),\xi(t)), \quad g_{0}(z)=z \in D(:=D(\slit)).
\end{equation}
\eqref{eq:KL2} has a unique solution $g_t(z)$ up to
$t_z = \zeta \wedge \sup\{t; \lvert g_t(z) - \xi(t) \rvert >0\}$
by Theorem~5.5~(i) of \cite{CF18}.
By Theorems~5.5, 5.8 and 5.12 of \cite{CF18},
$g_t$ is the canonical map from $D \setminus F_t$ onto $D_t$
where $F_t:=\{z \in D; t_z \leq t\}$, $t \in [0, \zeta)$,
$\{F_t\}$ is a family of growing (i.e.\ strictly increasing) hulls satisfying
\begin{equation} \label{eq:hull_limit}
\bigcap_{\delta>0}\overline{g_t(F_{t+\delta} \setminus F_t)} = \{\xi(t)\}
\end{equation}
for all $t < \zeta$, and further $\hcap^D(F_t)=2t$.
The family $\{F_t\}$, $\{g_t\}$ or $(g_t, F_t)$ here is called
the \emph{Komatu--Loewner evolution driven by $\xi$}.
In the present paper,
we also refer to $\{g_t\}$ as the \emph{Komatu--Loewner chain}.

In the same manner, we introduce
the \emph{stochastic Komatu--Loewner evolution} (SKLE)
as we defined SLE in Section~\ref{sec:intro}.
We say that a function $f \colon \Slit \to \R$ is
\emph{homogeneous with degree $a \in \R$} if, for any $c>0$,
\[
f(c\slit) = c^af(\slit),\quad \slit \in \Slit.
\]
Take two functions $\alpha(\slit)$ and $b(\slit)$ homogeneous
with degree $0$ and $-1$, respectively, and suppose that
both of them satisfy the local Lipschitz condition.
We consider the following SDEs:
\begin{gather}
\xi(t) = \xi + \int_0^t \alpha(\xi(s), \slit(s))\,dB_s
	+ \int_0^t b(\xi(s), \slit(s))\,ds, \label{eq:driv}\\
\slit_l(t)=\slit_l + \int_0^t b_l(\xi(s), \slit(s))\,ds,\quad 1 \leq l \leq 3N, \label{eq:slit}
\end{gather}
where $B_t$ is the one-dimensional standard BM.
The second equation \eqref{eq:slit} is the same as \eqref{eq:KLs2},
though we regard it as a part of the system of SDEs instead of a single ODE.
By the local Lipschitz condition, this system has a unique strong solution
up to its explosion time $\zeta$ (\cite[Theorem~4.2]{CF18}).
The above-mentioned properties also holds for this solution $(\xi(t), \slit(t))$.
We designate the resulting random evolution $\{F_t\}$ as $\skle_{\alpha, b}$.

\section{Convergence of a sequence of univalent functions}
\label{subsec:kernel}

In this section, a version of Carath\'eodory's kernel theorem is formulated,
which is later used to discuss the continuity of growing hulls.
Our discussion seems almost the same as in Chapter~V, Section~5 of \cite{Go69},
but we need some modifications, because Goluzin~\cite{Go69} treated domains
containing $\infty$ (in their interior) while we deal with domains in $\uhp$,
which does not contain $\infty$.
Therefore we provide a detailed description below for the sake of completeness.
The following two facts are fundamental to our argument:

\begin{prop}[Montel] \label{prop:Montel}
A family $\mathcal{H}$ of holomorphic functions on a domain $D \subset \C$
is equicontinuous uniformly on every compact subset of $D$
if it is locally bounded.
In this case $\mathcal{H}$ is a normal family on $D$, i.e.,
relatively compact in the topology of locally uniform convergence.
\end{prop}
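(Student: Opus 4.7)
The plan is to deduce equicontinuity from the Cauchy integral formula applied to derivatives, then combine this with the Arzel\`a--Ascoli theorem and a standard diagonal/exhaustion argument to obtain the normality conclusion.

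First I would fix a compact set $K \subset D$ and choose $r>0$ so small that the closed neighborhood $K_{2r}:=\{z \in \C;\ \dist(z, K) \leq 2r\}$ is still contained in $D$; compactness of $K$ together with $K \subset D$ makes this possible. Local boundedness of $\mathcal{H}$ then supplies a constant $M>0$ with $|f(w)| \leq M$ for every $f \in \mathcal{H}$ and every $w \in K_{2r}$. For $z \in K$ and $f \in \mathcal{H}$, the Cauchy integral formula on $\partial B(z, 2r) \subset D$ gives the derivative bound
\begin{equation*}
|f'(z)| = \left|\frac{1}{2\pi i}\int_{\partial B(z,2r)}\frac{f(w)}{(w-z)^2}\,dw\right| \leq \frac{M}{2r}.
\end{equation*}
For $z_1, z_2 \in K$ with $|z_1 - z_2| < r$, the segment joining them lies in $K_{2r}$, so integrating $f'$ along it yields $|f(z_1) - f(z_2)| \leq (M/2r)|z_1 - z_2|$. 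The Lipschitz constant $M/2r$ depends only on $K$ and not on $f \in \mathcal{H}$, which is precisely uniform equicontinuity on $K$.

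Second, I would upgrade equicontinuity plus uniform boundedness to normality. Exhaust $D$ by compact sets $K_n$ with $K_n \subset \mathrm{int}(K_{n+1})$ and $\bigcup_n K_n = D$. Given a sequence $\{f_k\} \subset \mathcal{H}$, the Arzel\`a--Ascoli theorem applied on $K_1$ yields a subsequence converging uniformly on $K_1$; iterating on $K_2, K_3, \ldots$ and passing to a diagonal subsequence produces one that converges uniformly on every $K_n$, hence locally uniformly on $D$. The Weierstrass convergence theorem then guarantees that the limit is holomorphic, so $\mathcal{H}$ is relatively compact in the topology of locally uniform convergence.

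The essential content is the derivative estimate from Cauchy's formula; once it is in hand, the rest (Arzel\`a--Ascoli plus diagonal extraction plus Weierstrass closure) is standard. I do not anticipate a genuinely delicate obstruction, the only care being to choose $r>0$ uniformly over $K$, which is immediate from compactness of $K$ and openness of $D$.
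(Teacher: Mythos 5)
Your proof is correct and is the standard argument for Montel's theorem; the paper states this proposition as a classical fact and gives no proof of its own, so there is nothing to compare against beyond noting that your route (Cauchy estimate $\Rightarrow$ uniform Lipschitz bound $\Rightarrow$ equicontinuity, then Arzel\`a--Ascoli with a diagonal extraction and the Weierstrass convergence theorem) is exactly the textbook one. One cosmetic slip: the bound $\lvert f'(z)\rvert \leq M/(2r)$ is derived only for $z \in K$, whereas the segment joining $z_1, z_2 \in K$ lies in $K_r$ rather than in $K$; for points $w$ of that segment you should integrate over $\partial B(w, r) \subset K_{2r}$, which gives the slightly worse but equally serviceable Lipschitz constant $M/r$. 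This does not affect the equicontinuity conclusion or anything downstream.
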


\begin{prop} \label{prop:univlim}
If a sequence $\{f_n\}$ of univalent functions on a domain $D$ converges
to a non-constant function $f$ uniformly on every compact subset of $D$,
then $f$ is also univalent on $D$.
\end{prop}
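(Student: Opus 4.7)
The plan is to derive the result from Hurwitz's theorem, which is the standard route for deducing preservation of univalence under locally uniform limits. Since $\{f_n\}$ converges locally uniformly to $f$ and each $f_n$ is holomorphic, by Weierstrass's theorem the limit $f$ is automatically holomorphic on $D$, so only the injectivity needs proof.

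I would argue by contradiction: suppose there exist distinct points $z_1, z_2 \in D$ with $f(z_1) = f(z_2) =: w_0$. Define the auxiliary sequence $h_n(z) := f_n(z) - f_n(z_2)$ on $D$. Each $h_n$ is univalent, so $z_2$ is the unique zero of $h_n$ in $D$. Moreover, $h_n$ converges locally uniformly on $D$ to $h(z) := f(z) - f(z_2)$, and $h$ is non-constant because $f$ is non-constant. Note $h(z_1) = 0$.

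Now choose $r > 0$ small enough that the closed disk $\overline{B(z_1, r)}$ is contained in $D$ and does not contain $z_2$. On this disk, each $h_n$ is a non-vanishing holomorphic function. By Hurwitz's theorem in its ``non-vanishing'' form, the locally uniform limit $h$ restricted to $B(z_1,r)$ must be either non-vanishing or identically zero. Since $h(z_1) = 0$, we are forced into $h \equiv 0$ on $B(z_1, r)$, and by the identity theorem $h \equiv 0$ on the connected domain $D$. This contradicts the assumption that $f$ is non-constant, completing the proof.

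There is no serious obstacle here; the only care needed is in the application of Hurwitz's theorem and in the choice of the disk so that the sole zero $z_2$ of each $h_n$ is excluded. For completeness one could instead invoke the argument-principle version of Hurwitz's theorem (counting zeros in $B(z_1, r)$ via contour integrals of $h_n'/h_n$), but the non-vanishing formulation above is the most economical.
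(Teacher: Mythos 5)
Your proof is correct. The paper states Proposition~\ref{prop:univlim} as a classical fact without proof (it is the standard corollary of Hurwitz's theorem found in the cited references, e.g.\ Goluzin or Conway), and your argument --- reducing to the non-vanishing form of Hurwitz's theorem applied to $h_n(z)=f_n(z)-f_n(z_2)$ on a small disk about $z_1$ excluding $z_2$ --- is exactly that standard proof, with the one delicate point (that $z_2$, the sole zero of each $h_n$, must be excluded from the disk) handled properly.
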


In addition, the following two classes of univalent functions are significant:
First, we define the set $S$ as the totality of univalent functions
$f \colon \disk \to \C$ satisfying $f(0)=0$ and $f'(0)=1$.
In other words, a univalent function $f \colon \disk \to \C$ belongs to $S$
if and only if $f(z)$ has the following power series expansion around the origin:
\begin{equation} \label{eq:classS}
f(z)=z+\sum_{n=2}^{\infty}a_n z^n.
\end{equation}
Next, we define the set $\Sigma$ as the totality of univalent functions
$f \colon \disk^* \to \C$ satisfying $f(\infty)=\infty$
and $\res(f, \infty)=1$.
In other words, a univalent function $f \colon \disk^* \to \C$
belongs to $\Sigma$ if and only if $f(z)$ has the following Laurent series expansion
around $\infty$:
\begin{equation} \label{eq:classSigma}
f(z)=z+b_0+\sum_{n=1}^{\infty}b_n z^{-n}.
\end{equation}

\begin{prop}[Area theorem, Gronwall] \label{prop:areathm}
Suppose $f \in \Sigma$ with Laurent series expansion \eqref{eq:classSigma}.
Then it holds that $\sum_{n=1}^{\infty}n\lvert b_n \rvert^2 \leq 1$.
\end{prop}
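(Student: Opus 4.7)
The plan is to deduce the inequality from the nonnegativity of a naturally appearing area. For each $r>1$, the restriction of $f$ to the circle $\{\lvert z\rvert=r\}$ is continuous and injective (by univalence of $f$ on $\disk^*$), so by compactness $\gamma_r:=f(\{\lvert z\rvert=r\})$ is a Jordan curve. Since $f(\infty)=\infty$ and $f$ sends $\{\lvert z\rvert>r\}$ homeomorphically onto a connected neighborhood of $\infty$ in $\hat{\C}$, the unbounded component of $\C\setminus\gamma_r$ coincides with $f(\{\lvert z\rvert>r\})$. Let $\Omega_r$ denote the bounded component and set $A(r):=\mathrm{Area}(\Omega_r)\geq 0$.

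Next I would compute $A(r)$ by Green's theorem in the form $\mathrm{Area}(\Omega_r)=\frac{1}{2i}\oint_{\partial\Omega_r}\bar{w}\,dw$ for the positively oriented boundary. Parametrising $\gamma_r$ by $w(\theta)=f(re^{i\theta})$ with $\theta\in[0,2\pi]$ yields precisely this orientation: $f$ is orientation-preserving, and $\{\lvert z\rvert>r\}$ lies to the right of $z=re^{i\theta}$ as $\theta$ increases, so $f(\{\lvert z\rvert>r\})$ lies to the right of $\gamma_r$ and $\Omega_r$ to the left. Therefore
\[
A(r)=\frac{r}{2}\int_0^{2\pi}\overline{f(re^{i\theta})}\,f'(re^{i\theta})\,e^{i\theta}\,d\theta.
\]

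I would then substitute the Laurent expansion \eqref{eq:classSigma} and its termwise derivative (both converge absolutely and uniformly on $\{\lvert z\rvert=r\}$ for $r>1$). The integrand becomes a product of two absolutely convergent trigonometric series in $\theta$, and since $\int_0^{2\pi}e^{ik\theta}\,d\theta=2\pi\delta_{k,0}$, only the zero-frequency terms survive. A direct bookkeeping of those terms gives
\[
A(r)=\pi r^2-\pi\sum_{n=1}^{\infty}n\lvert b_n\rvert^2 r^{-2n}.
\]
Combined with $A(r)\geq 0$ this yields $\sum_{n=1}^{\infty}n\lvert b_n\rvert^2 r^{-2n}\leq r^2$ for every $r>1$. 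Letting $r\downarrow 1$ and applying monotone convergence (each term increases since $r^{-2n}\uparrow 1$) produces $\sum_{n=1}^{\infty}n\lvert b_n\rvert^2\leq 1$.

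The main hurdle is essentially bookkeeping: fixing the correct orientation of $\gamma_r$ relative to $\Omega_r$ (the wrong sign would produce a meaningless inequality) and carefully isolating the zero Fourier mode of the product of the two Laurent series. Both issues are routine once one commits to the parametrisation above and exploits the uniform convergence of \eqref{eq:classSigma} on $\{\lvert z\rvert=r\}$ for $r>1$.
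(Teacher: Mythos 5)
Your argument is correct: it is the classical area-theorem proof (nonnegativity of the area enclosed by $f(\{\lvert z\rvert=r\})$, Green's theorem, extraction of the zero Fourier mode, then $r\downarrow 1$), and the orientation and limit steps are handled properly. The paper itself gives no proof but defers to Goluzin and Pommerenke, whose proofs are exactly this argument, so your proposal matches the intended one.
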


\begin{proof}
See for instance \cite[Theorem~II.4.1]{Go69}
or Inequality (5) in \cite[Section~1.2]{Po75}.
\end{proof}

\begin{prop}[Bieberbach] \label{prop:Bie}
Suppose $f \in S$ with power series expansion \eqref{eq:classS}.
Then it holds that $\lvert a_2 \rvert \leq 2$.
\end{prop}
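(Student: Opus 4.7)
The plan is to apply the area theorem (Proposition~3.3) to a suitable function in $\Sigma$ built from $f$. The construction exploits that if $f \in S$ then $f(w)/w$ is a non-vanishing holomorphic function on $\disk$ with value $1$ at the origin, so on the simply connected domain $\disk$ it admits a unique holomorphic square root $\varphi$ with $\varphi(0)=1$. I would then set
\[
h(z) := z\,\varphi(z^2), \qquad z \in \disk,
\]
which is an odd holomorphic function on $\disk$ satisfying $h(z)^2 = f(z^2)$ and having the power series expansion
\[
h(z) = z + \tfrac{a_2}{2}\,z^3 + O(z^5)
\]
obtained by substituting the expansion of $f$ into the defining identity.

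The first key step is to verify that $h$ is univalent on $\disk$. If $h(z_1)=h(z_2)$, then $f(z_1^2)=f(z_2^2)$, and since $f$ is univalent this forces $z_1^2=z_2^2$, hence $z_1=\pm z_2$. In the case $z_1=-z_2$, the oddness of $h$ gives $h(z_2)=-h(z_2)$, so $h(z_2)=0$; but $h(z)=0$ only at $z=0$ because the same is true of $f$, so $z_1=z_2=0$. This establishes $h \in S$ (odd).

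Next I would pass to $\disk^*$ by inversion. Define
\[
g(z) := \frac{1}{h(1/z)}, \qquad z \in \disk^*,
\]
which is univalent (composition of univalent maps) and has $g(\infty)=\infty$. Using the expansion of $h$,
\[
g(z) = \frac{1}{\tfrac{1}{z} + \tfrac{a_2}{2}\,\tfrac{1}{z^3} + O(z^{-5})}
= z\left(1 - \tfrac{a_2}{2}z^{-2} + O(z^{-4})\right)
= z - \tfrac{a_2}{2}\,z^{-1} + O(z^{-3}),
\]
so $g \in \Sigma$ with $b_0=0$ and $b_1 = -a_2/2$. Applying Proposition~\ref{prop:areathm} gives $|b_1|^2 \le \sum_{n\ge1} n|b_n|^2 \le 1$, whence $|a_2| \le 2$.

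The only place that requires care is confirming that $h$ is well-defined, holomorphic and univalent; once the odd square-root construction is set up, the Laurent expansion and the appeal to the area theorem are purely formal. I therefore expect the univalence argument for $h$ — in particular the handling of the case $z_1=-z_2$ via oddness and the zeros of $f$ — to be the only substantive point in the proof.
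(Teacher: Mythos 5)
Your proof is correct and is precisely the classical square-root transform argument (pass to the odd function $h$ with $h(z)^2=f(z^2)$, invert to get an element of $\Sigma$ with $b_1=-a_2/2$, and apply the area theorem), which is exactly the proof given in the references the paper cites for this statement. All the delicate points — non-vanishing of $f(w)/w$, the univalence of $h$ via the case $z_1=-z_2$, and the Laurent expansion of $g$ — are handled correctly.
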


\begin{proof}
See Chapter~II, Section~4 of \cite{Go69} or \cite[Theorem~1.5]{Po75}.
\end{proof}

\begin{lem}[{\cite[Theorem~II.4.3 and Lemma~V.2.2]{Go69}}] \label{lem:Sigma}
Suppose $f \in \Sigma$ with Laurent series expansion \eqref{eq:classSigma}.
Then $\C \setminus f(\disk^*) \subset \overline{B(b_0, 2)}$
and $\lvert f(z)-b_0 \rvert \leq 2\lvert z \rvert$ for $z \in \disk^*$.
\end{lem}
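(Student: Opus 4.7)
The plan is to derive both assertions from Bieberbach's coefficient estimate (Proposition~\ref{prop:Bie}) via the standard inversion/rescaling tricks, using the univalence of $f$ on $\disk^*$ in an essential way for the second assertion.

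For the first assertion, I would fix $w_0 \in \C \setminus f(\disk^*)$ and consider the shifted map $g(z) := f(z) - w_0$, which is univalent and nowhere vanishing on $\disk^*$. This makes the composition
\[
h(w) := \frac{1}{g(1/w)}, \quad w \in \disk,
\]
well-defined and univalent on $\disk$, with $h(0)=0$. Formally inverting the Laurent series \eqref{eq:classSigma} yields the Taylor expansion
\[
h(w) = w + (w_0-b_0)w^2 + O(w^3),
\]
so $h \in S$ with $a_2 = w_0 - b_0$. Proposition~\ref{prop:Bie} then gives $|w_0 - b_0|\le 2$.

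For the second assertion, I would rescale and reapply the first. For $R>1$, define $f_R(z) := f(Rz)/R$ on $\disk^*$; a quick inspection of \eqref{eq:classSigma} shows $f_R \in \Sigma$ with constant coefficient $b_0/R$, so the first assertion applied to $f_R$ and then rescaled by $R$ gives
\[
\C \setminus f(\{|z|>R\}) \subset \overline{B(b_0, 2R)}.
\]
Now fix $z_0 \in \disk^*$ and set $R_0 := |z_0|$. For any $R \in (R_0, \infty)$, the point $z_0$ lies outside $\{|z|>R\}$, so the univalence of $f$ forces $f(z_0) \notin f(\{|z|>R\})$; the above inclusion then yields $|f(z_0)-b_0| \le 2R$. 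Letting $R\downarrow R_0$ gives the desired bound $|f(z_0)-b_0| \le 2|z_0|$.

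The only calculation requiring attention is the inversion of the Laurent series to extract $a_2$ in the first step, which is mechanical, and the verification that the rescaled $f_R$ indeed belongs to $\Sigma$. No substantial obstacle is anticipated; the argument is essentially a combinatorial use of injectivity combined with the classical coefficient bound.
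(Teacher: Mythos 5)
Your proof is correct and follows essentially the same route as the paper: the inversion $h(w)=1/(f(1/w)-w_0)\in S$ combined with Bieberbach's bound for the first assertion, and a rescaling of $f$ to reduce the second assertion to the first. The only (harmless) difference is that the paper scales by the complex parameter $w=z_0$ itself and evaluates the rescaled map at the boundary point $1$, whereas you scale by a real $R>\lvert z_0\rvert$ and let $R\downarrow\lvert z_0\rvert$, which is if anything slightly more careful.
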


\begin{proof}
For any $c \in \C \setminus f(\disk^*)$, the function
\[
f_c(z):=\frac{1}{f(1/z)-c}=z+(c-b_0)z^2+\cdots
\]
belongs to $S$, and by Bieberbach's theorem \ref{prop:Bie} we have
$\lvert c-b_0 \rvert \leq 2$.
Hence the former part of the lemma follows.

To prove the latter, we consider the function
\[
F_w(z):=\frac{1}{w}f(wz)-\frac{b_0}{w}
=z + \frac{b_1}{w^2z} + \cdots
\]
for $w \in \disk^*$. Since $F_w \in \Sigma$,
we have $\partial F_w(\disk^*) \subset \overline{B(0, 2)}$
by the former part of the lemma.
In particular $\lvert F_w(1) \rvert \leq 2$, that is,
$\lvert f(w)-b_0 \rvert \leq 2\lvert w \rvert$.
\end{proof}

The following corollary easily follows from Lemma~\ref{lem:Sigma}:

\begin{cor} \label{cor:precptSigma}
Suppose that $D \subset \C$ is a domain containing $\Delta(0, r)$
for some $r>0$ and that $\mathcal{H}=\{f_{\lambda}; \lambda \in \Lambda\}$
is a family of univalent functions on $D$ with Laurent series expansion
\[
f_{\lambda}(z)=z+b^{(\lambda)}_0+\sum_{n=1}^{\infty}b^{(\lambda)}_n z^{-n}
\]
around $\infty$.
Then $\mathcal{H}$ is locally bounded on $D$
if and only if $\{b^{(\lambda)}_0; \lambda \in \Lambda\}$ is bounded.
\end{cor}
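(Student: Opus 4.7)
The plan is to prove the two implications separately, with Lemma~\ref{lem:Sigma} as the workhorse in both.

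For the ``only if'' direction I would assume $\mathcal{H}$ locally bounded and fix any $R>r$; the circle $\{\lvert z \rvert = R\}$ is a compact subset of $\Delta(0,r) \subset D$, on which $M_R:=\sup\{\lvert f_\lambda(z) \rvert : \lvert z \rvert = R,\,\lambda\in\Lambda\}$ is finite. Because the given Laurent series converges throughout $\Delta(0,r)$, the coefficient $b_0^{(\lambda)}$ can be extracted by the Laurent coefficient formula
\[
b_0^{(\lambda)} \;=\; \frac{1}{2\pi i}\oint_{\lvert z \rvert = R}\frac{f_\lambda(z)-z}{z}\,dz,
\]
whence $\lvert b_0^{(\lambda)} \rvert \leq M_R + R$, which is the desired uniform bound.

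For the ``if'' direction, put $M:=\sup_\lambda \lvert b_0^{(\lambda)} \rvert$ and rescale: for each $\lambda$ define $F_\lambda(w):=f_\lambda(rw)/r$ on $\disk^*$. A direct substitution in the Laurent expansion gives
\[
F_\lambda(w) \;=\; w + \frac{b_0^{(\lambda)}}{r} + \sum_{n=1}^{\infty}\frac{b_n^{(\lambda)}}{r^{n+1}}\,w^{-n},
\]
so $F_\lambda \in \Sigma$ with constant term $b_0^{(\lambda)}/r$. Applying Lemma~\ref{lem:Sigma} to $F_\lambda$ and translating back yields simultaneously
\[
\lvert f_\lambda(z)-b_0^{(\lambda)} \rvert \leq 2\lvert z \rvert \text{ for } z \in \Delta(0,r), \qquad \C \setminus f_\lambda(\Delta(0,r)) \subset \overline{B(b_0^{(\lambda)},2r)}.
\]
For any $z \in D \setminus \Delta(0,r)$, injectivity of $f_\lambda$ on $D$ forces $f_\lambda(z) \notin f_\lambda(\Delta(0,r))$, so the second inclusion gives $\lvert f_\lambda(z)-b_0^{(\lambda)} \rvert \leq 2r$. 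Combining the two cases, $\lvert f_\lambda(z) \rvert \leq M + 2\max(\lvert z \rvert, r)$ for every $z \in D$ and every $\lambda$, which is exactly local boundedness.

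The delicate point will be the converse direction: Lemma~\ref{lem:Sigma} controls $f_\lambda$ only on $\Delta(0,r)$, whereas local boundedness must be established on all of $D$, which can extend arbitrarily far beyond $\Delta(0,r)$. The key observation is that the ``missing values'' assertion of Lemma~\ref{lem:Sigma}, combined with the global univalence of $f_\lambda$ on $D$, propagates the bound from $\Delta(0,r)$ to the complementary part $D \setminus \Delta(0,r)$ by confining each $f_\lambda(z)$ there to a disk of radius $2r$ around $b_0^{(\lambda)}$; this injectivity-transport trick is the heart of the argument.
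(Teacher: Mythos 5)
Your proof is correct and follows exactly the route the paper intends: the paper gives no written proof, stating only that the corollary ``easily follows from Lemma~\ref{lem:Sigma},'' and your rescaled application of that lemma (together with the Cauchy coefficient formula for the converse and the injectivity argument to cover $D\setminus\Delta(0,r)$) is the natural way to fill in the details. No gaps.
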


We now turn to the definition of kernel,
a key notion throughout our discussion in Section~\ref{sec:KLeq}.
To clarify the role of each hypothesis
in the kernel theorem~\ref{prop:kerthmH} below,
we mention our hypotheses in a fashion slightly more abstract
than we need in this paper.
Let $\{D_n; n \in \N\}$ be a sequence of domains in $\uhp$.
We assume that
\begin{enumerate}
\item[(K.1)] there exists a constant $L>0$ such that
$\Delta(0, L) \cap \uhp \subset D_n$ for all $n$.
\end{enumerate}

\begin{dfn} \label{def:kernel}
Under Assumption (K.1), the \emph{kernel} of $\{D_n\}$ is defined
as the largest unbounded domain $D$ such that
each compact subset $K \subset D$ is included by $\bigcap_{n \geq n_K}D_n$
for some $n_K \in \N$.
If every subsequence of $\{D_n\}$ has the same kernel,
then we say that \emph{$\{D_n\}$ converges to $D$
in the sense of kernel convergence} and denote it simply by $D_n \to D$.
\end{dfn}

In other words, the kernel $D$ is an unbounded connected component of the set
of all points $z$ such that $B(z, r_z) \subset D_n$, $n \geq n_z$, for some $r_z>0$
and $n_z \in \N$. By Assumption (K.1),
$D$ always exists, is unique and contains $\Delta(0, L) \cap \uhp$.

Let $D$ be the kernel of $\{D_n\}$ and $f$ and $f_n$, $n \in \N$, be functions on $D$ and $D_n$, respectively.
If $\{f_n\}$ converges to $f$ uniformly on each compact subset $K$ of $D$,
then we say as usual that \emph{$\{f_n\}$ converges to $f$
uniformly on compacta} and denote it by $f_n \to f$ u.c.\ on $D$.
This convergence makes sense since $K$ is included by $D_n$
for sufficiently large $n$.
In what follows, we assume that each $f_n \colon D_n \to \C$ is univalent
and enjoys the following two conditions:
\begin{enumerate}
\item[(K.2)]
$\lim_{z \to \infty}(f_n(z)-z)=0$;
\item[(K.3)]
$\lim_{z \to \xi_0, z \in D_n} \Im f_n(z)=0$
for all $\xi_0 \in \partial \uhp \cap \Delta(0, L)$;
\end{enumerate}
where $L$ is the constant in Assumption (K.1).
Note that, as is easily seen, Propositions~\ref{prop:Montel} and \ref{prop:univlim}
and Corollary~\ref{cor:precptSigma} hold even for the moving domains $D_n$.
By Schwarz's reflection, Assumption (K.3) means that $f_n$ can be extended
to a univalent function on the domain $D_n \cup \Delta(0, L)$.
We denote the extended map by $f_n$ again.
Then by Assumption (K.2),
$f_n$ has the Laurent expansion
$f_n(z)=z+a_n/z+o(z^{-1})$ around $\infty$ for some constant $a_n$,
and $\{f_n\}$ is a normal family on $D$ by Corollary~\ref{cor:precptSigma}
and Montel's theorem~\ref{prop:Montel}.

Under (K.1)--(K.3) and some additional assumptions,
we prove a version of the kernel theorem,
which relates the u.c.\ convergence of $\{f_n\}$
to the kernel convergence of $\{f_n(D_n)\}$,
mainly following the proof of \cite[Theorem~V.5.1]{Go69}.

\begin{thm}[Kernel theorem] \label{prop:kerthmH}
Suppose that $\{D_n\}$ and $\{f_n\}$ satisfy Assumptions {\rm (K.1)--(K.3)}
and that there exist mutually disjoint subsets $A_0$, $A_1$, ..., $A_N$, $N \in \N$,
of $\uhp$ with the following conditions:
\begin{itemize}
\item $A_0$ is a hull or an empty set;
\item Each $A_j$, $1 \leq j \leq N$, is a compact, connected set
with $\uhp \setminus A_j$ connected;
\item $D_n \to D:=\uhp \setminus \bigcup_{j=0}^N A_j$.
\end{itemize}
Let $\tilde{D}_n:=f_n(D_n)$.
Then the following are equivalent:
\begin{enumerate}
\item \label{cond:ucconv} There exists a univalent function $f$ on $D$
such that $f_n \to f$ u.c.\ on $D$;
\item \label{cond:kerconv} There exists a domain $\tilde{D}$
such that $\tilde{D}_n \to \tilde{D}$.
\end{enumerate}
If one of these conditions happens, then $\tilde{D}=f(D)$, and
$f_n^{-1} \to f^{-1}$ u.c. on $\tilde{D}$.
\end{thm}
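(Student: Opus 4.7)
The plan is to adapt the proof of Carath\'eodory's classical kernel theorem (\cite[Ch.~V, \S5]{Go69}) to the half-plane/hydrodynamic setting. Under (K.1)--(K.3), Schwarz reflection across $\partial\uhp\cap\Delta(0,L)$ extends each $f_n$ to a univalent map on $D_n\cup\Delta(0,L)\cup\Pi D_n$ with Laurent expansion $f_n(z)=z+a_n/z+o(z^{-1})$ at $\infty$. Corollary~\ref{cor:precptSigma} applied on a fixed set $\Delta(0,L')\subset\Delta(0,L)\cap D_n$ together with Proposition~\ref{prop:Montel} makes $\{f_n\}$ a normal family on the reflected limit domain, and the area theorem (Proposition~\ref{prop:areathm}) gives a uniform bound $|f_n(z)-z|\le C/|z|$ for $|z|$ large, independent of $n$.

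For (i)$\Rightarrow$(ii), set $\tilde D:=f(D)$. Given compact $\tilde K\subset\tilde D$, for each $\tilde w=f(w)\in\tilde K$ pick a disk $\overline{B(w,\rho)}\subset D$ and let $m:=\min_{|\zeta-w|=\rho}|f(\zeta)-\tilde w|>0$; uniform convergence on $\overline{B(w,\rho)}$ and Rouch\'e's theorem yield exactly one zero of $f_n-\tilde w$ in $B(w,\rho)$ for large $n$, hence $B(\tilde w,\varepsilon)\subset f_n(B(w,\rho))\subset\tilde D_n$, and compactness finishes the inclusion $\tilde D\subseteq\liminf\tilde D_n$. For the matching reverse inclusion and the fact that every subsequence has kernel $\tilde D$, suppose $\tilde z_0$ lies in the kernel of some subsequence so that $B(\tilde z_0,r)\subset\tilde D_{n_k}$ eventually; then $g_{n_k}:=f_{n_k}^{-1}|_{B(\tilde z_0,r)}$ maps into $\uhp$, and $\{g_{n_k}(\tilde z_0)\}$ is bounded (otherwise $f_{n_k}(g_{n_k}(\tilde z_0))\to\infty$ by the uniform Laurent bound, contradicting $f_{n_k}(g_{n_k}(\tilde z_0))=\tilde z_0$). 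Hence $\{g_{n_k}\}$ is normal; along a further subsequence $g_{n_k}\to g$ u.c. on $B(\tilde z_0,r)$, the kernel hypothesis forces $g(B(\tilde z_0,r))\subset D$ (see the obstacle below), and passing to the limit in $f_{n_k}\circ g_{n_k}=\mathrm{id}$ gives $f\circ g=\mathrm{id}$. This yields $\tilde z_0\in\tilde D$ and simultaneously $f_n^{-1}\to f^{-1}$ u.c. on $\tilde D$.

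For (ii)$\Rightarrow$(i), normality of $\{f_n\}$ lets every subsequence be refined to a u.c.-convergent one $f_{n_k}\to f$, and $f$ is non-constant (hence univalent by Proposition~\ref{prop:univlim}) because $f(z)=z+O(1/z)$ near $\infty$ by the uniform Laurent bound. Uniqueness of the limit is settled by the already-proved implication: two subsequential limits $f,f'$ both satisfy $f(D)=f'(D)=\tilde D$, so $h:=f'^{-1}\circ f$ is an automorphism of $D$ with $h(z)-z\to 0$ at $\infty$; after Schwarz reflection across $\partial\uhp\setminus\bigcup_j\overline{A_j}$ one obtains an automorphism of a finitely connected planar domain with the same asymptotic normalization, which rigidity of such automorphism groups forces to be the identity. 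Therefore the full sequence $f_n\to f$ u.c. on $D$.

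The main obstacle is the upper-bound step in (i)$\Rightarrow$(ii), namely showing that the subsequential limit $g$ actually takes values in $D$ rather than merely in $\overline{\uhp}$. If $g$ took a value in $\bigcup_j A_j$ or on $\partial\uhp\cap\overline{A_0}$, the open-mapping theorem would make $g$ assume a whole neighborhood of such a point, which by the hypothesis $D_n\to D$ (applied to small balls around that point, which are eventually disjoint from $D_n$ if the point is a non-degenerate interior point of some $A_j$) would contradict $g_{n_k}(\tilde z)\in D_{n_k}$; the degenerate cases (hitting $\partial A_j$ or $\partial\uhp$) are ruled out by combining the boundary normalization (K.3) with the maximum principle applied to $\Im g$. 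A secondary technical point, needed for the normality of $\{g_{n_k}\}$, is converting the pointwise bound on $g_{n_k}(\tilde z_0)$ into local boundedness on $B(\tilde z_0,r)$; this is a standard Koebe-type distortion estimate for univalent functions, but in the half-plane setting one passes through Schwarz reflection and the area theorem again rather than invoking Koebe on the disk directly.
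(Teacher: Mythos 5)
Your overall architecture matches the paper's (normal families via Corollary~\ref{cor:precptSigma} and Montel, Rouch\'e for $f(D)\subset\tilde D$, inverse maps for the reverse inclusion, and a two-subsequential-limits contradiction for (ii)$\Rightarrow$(i)), but there is a genuine gap in the uniqueness step of (ii)$\Rightarrow$(i). You propose to extend the automorphism $h={f'}^{-1}\circ f$ of $D$ by Schwarz reflection across $\partial\uhp\setminus\bigcup_j\overline{A_j}$ and invoke ``rigidity.'' Two problems. First, to reflect $h$ you would need $\Im h\to 0$ along all of $\partial\uhp\cap\partial D$, but (K.3) only gives the boundary normalization of the $f_n$ (hence of the limits $f,f'$) on $\partial\uhp\cap\Delta(0,L)$; the paper explicitly notes, immediately after its proof, that the subsequential limits extend only to $\Delta(0,L)\cup D$ and that this is precisely why Goluzin's reflection/$\Sigma$-class argument cannot be mimicked here. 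A priori the prime-end homeomorphism induced by $h$ could move part of $\partial\uhp$ onto $\partial_p A_0$ (they lie on the same boundary component), so the reflection is not justified. Second, even granting the reflection, ``rigidity of automorphism groups'' is not enough as stated: finitely connected domains have nontrivial automorphism groups, and what is really needed is the uniqueness theorem for normalized slit mappings. The paper's route avoids both issues: it composes $h$ with the canonical map $g\colon D\to\hat D$ onto a standard slit domain, observes that $g\circ h$ is again a hydrodynamically normalized canonical map on $D$, and concludes $h=\mathrm{id}$ from the uniqueness in Proposition~\ref{prop:canonical}; this is also where the hypotheses on $A_0,\dots,A_N$ enter, together with a removable-singularity reduction when some $A_j$ are singletons, which your argument omits.

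A secondary point: in (i)$\Rightarrow$(ii) you correctly flag the obstacle of showing that the subsequential limit $g$ of the inverses maps into $D$, but your proposed patch (open mapping plus a maximum-principle argument) remains incomplete for points of $\partial A_j$ and of $\partial\uhp$. The paper closes this without any boundary case analysis, by symmetry: since $f_n^{-1}\colon\tilde D_n\to D_n$ and $D_n\to D$, the Rouch\'e inclusion you have already proved, applied to $\{f_n^{-1}\}$ in place of $\{f_n\}$, gives $g(\tilde D)\subset D$ directly, whence $D=g(f(D))\subset g(\tilde D)\subset D$ and $f(D)=\tilde D$. You should replace your pointwise boundary analysis by that symmetric step.
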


\begin{lem} \label{lem:K1forIm}
Under Assumptions {\rm (K.1)--(K.3)}, the sequence $\{\tilde{D}_n\}$
in Theorem~\ref{prop:kerthmH} enjoys Condition {\rm (K.1)}
with the constant $L$ in {\rm (K.1)} replaced by $2L$.
\end{lem}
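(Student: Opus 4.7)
The goal is to verify that $\Delta(0, 2L) \cap \uhp \subset \tilde{D}_n = f_n(D_n)$ for every $n$. The plan is to rescale the Schwarz-extended $f_n$ into an element of the class $\Sigma$ with vanishing constant term, apply Lemma~\ref{lem:Sigma} to bound the omitted set of its image from above by $\overline{B(0,2)}$, and then exploit the reflection symmetry to ensure that the preimage of any point of $\Delta(0,2L)\cap\uhp$ actually falls inside $D_n$ rather than in the reflected region $\Pi D_n$.

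First I will fix the Laurent expansion of $f_n$ at $\infty$. Assumption (K.3) with Schwarz's principle gives a univalent extension of $f_n$ to $D_n \cup \Delta(0, L)$, so in particular $f_n$ is holomorphic on the punctured neighbourhood $\{|z|>L\}$ of $\infty$. Setting $\phi(w):=f_n(1/w)$ on $0<|w|<1/L$, Assumption (K.2) yields $\phi(w)-1/w \to 0$ as $w\to 0$, so Riemann's removable-singularity theorem gives $\phi(w)=1/w+a_n w + O(w^2)$; equivalently $f_n(z)=z+a_n z^{-1}+o(z^{-1})$ near $\infty$, with no constant term. I then introduce the rescaling $g_n(w):=f_n(Lw)/L$ for $w\in\disk^*$, which is well defined and univalent because $\{|z|>L\}\subset D_n\cup\Delta(0,L)$. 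The expansion above becomes $g_n(w)=w+a_n L^{-2}w^{-1}+o(w^{-1})$, so $g_n\in\Sigma$ with constant term $b_0=0$. Lemma~\ref{lem:Sigma} therefore yields $\C\setminus g_n(\disk^*)\subset\overline{B(0,2)}$, i.e.\ every $\zeta$ with $|\zeta|>2L$ lies in $f_n(\Delta(0,L))$.

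To conclude I localise the preimage to $\uhp$. The extended $f_n$ obeys the symmetry $f_n(\bar z)=\overline{f_n(z)}$, so by injectivity $f_n$ sends $\Delta(0,L)\cap\uhp$ into $\uhp$, $\Delta(0,L)\cap\Pi\uhp$ into $\Pi\uhp$, and $\Delta(0,L)\cap\R$ into $\R$. For $\zeta\in\Delta(0,2L)\cap\uhp$, the preimage $z'\in\Delta(0,L)$ furnished in the previous step is then forced to lie in $\uhp$; since $\Delta(0,L)\cap\uhp\subset D_n$ by (K.1), we get $\zeta=f_n(z')\in f_n(D_n)=\tilde{D}_n$, finishing the proof. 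The only subtle point in the plan is the vanishing of the constant term in the Laurent expansion of $g_n$: without $b_0=0$, Lemma~\ref{lem:Sigma} would merely rule out the disc $\overline{B(b_0,2)}$ and the rescaling would no longer deliver the sharp constant $2L$.
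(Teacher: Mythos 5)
Your proof is correct and follows the same route as the paper: rescale the Schwarz-extended map so that $L^{-1}f_n(Lz)\in\Sigma$ with vanishing constant term (using (K.2)), and apply Lemma~\ref{lem:Sigma} to conclude $f_n(\Delta(0,L))\supset\Delta(0,2L)$. Your explicit verification that $b_0=0$ and the reflection-symmetry argument localizing the preimage to $\uhp$ are details the paper leaves implicit, but the approach is identical.
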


\begin{proof}
Since $L^{-1}f_n(Lz) \in \Sigma$ by (K.1)--(K.3),
we have $\C \setminus L^{-1}f_n(L\disk^*) \subset \overline{B(0, 2)}$
by Lemma~\ref{lem:Sigma}, that is, $f_n(\Delta(0, L)) \supset \Delta(0, 2L)$.
\end{proof}

By Lemma~\ref{lem:K1forIm}, we can define the kernel of $\{\tilde{D}_n\}$,
which is denoted by $\tilde{D}$.

\begin{proof}[Proof of Theorem~\ref{prop:kerthmH}]
\noindent
$\eqref{cond:ucconv} \Rightarrow \eqref{cond:kerconv}$:
Assume \eqref{cond:ucconv}.
Note that $f$ is univalent on $D$ by Proposition~\ref{prop:univlim}.
What we should prove is that any subsequence of $\{\tilde{D}_n\}$
has the same kernel $\tilde{D}$.

We first show that $f(D) \subset \tilde{D}$.
Fix an arbitrary compact subset $K$ of $f(D)$.
We take a bounded domain $V$ with smooth boundary so that
$K \subset V \subset \overline{V} \subset f(D)$ and
put $\delta:=\dist(K, \partial V)/2>0$.
We then have $\lvert f(z)-w \rvert > \delta$ for $w \in K$ and
$z \in \partial f^{-1}(V)$.
On the other hand, there is some $n_{K, V} \in \N$ such that
$\lvert f_n(z)-f(z) \rvert < \delta$ for $z \in \overline{f^{-1}(V)}$ and
$n \geq n_{K, V}$, since $\{f_n\}$ converges to $f$ uniformly
on the compact subset $\overline{f^{-1}(V)}(=f^{-1}(\overline{V}) \subset D)$.
Thus by the equation
\[
f_n(z)-w = (f_n(z)-f(z)) + (f(z)-w),
\]
we can conclude from Rouch\'e's theorem that all the functions $f_n(z)-w$
for $w \in K$ and $n \geq n_{K, V}$ have exactly one zero in $f^{-1}(V)$.
This implies that $K \subset f_n(f^{-1}(V)) \subset \tilde{D}_n$
for $n \geq n_{K, V}$, and so $f(D) \subset \tilde{D}$ by definition.

We next consider the inverse map $f_n^{-1}$.
By the Laurent expansion of $f_n$ and Lemma~\ref{lem:K1forIm},
$f_n^{-1}$ also has the expansion $f_n^{-1}(z)=z-a_n/z+\cdots$, $z \to \infty$.
By Corollary~\ref{cor:precptSigma} and Montel's theorem~\ref{prop:Montel},
$\{f_n^{-1}\}$ is a normal family and so
has a subsequence $\{f_{n_k}^{-1}\}$ converging u.c.\ on $\tilde{D}$.
We can check that the limiting univalent function $g:=\lim_{k \to \infty}f_{n_k}^{-1}$
is the inverse map of $f$ on $f(D)$ as follows:
For a fixed $z \in D$, we take $N$ so large that
$\{f_{n_k}(z); k \geq N\} \cup \{f(z)\}$ is a bounded subset of $f(D)$.
Since $\{f_{n_k}^{-1}\}_k$ converges and is equicontinuous
uniformly on this compact set, we have
\begin{align*}
g(f(z))-z&=\{g(f(z))-f_{n_k}^{-1}(f(z))\}+\{f_{n_k}^{-1}(f(z))-f_{n_k}^{-1}(f_{n_k}(z))\} \\
&\to 0 \quad \text{as} \quad k \to \infty.
\end{align*}
Hence $g|_{f(D)}=f^{-1}$, independent of the choice
of the subsequence $\{f_{n_k}^{-1}\}$.
By the identity theorem, any convergent subsequence of $\{f_n^{-1}\}$
has the same limit $g$ on the whole $\tilde{D}$.
Thus the original sequence $\{f_n^{-1}\}$ converges to $g$ u.c.\ on $\tilde{D}$.

Reversing the roles of $f_n$ and $f_n^{-1}$ at the beginning of this proof,
we have $g(\tilde{D}) \subset D$.
In particular, since $g|_{f(D)}=f^{-1}$ and $f(D) \subset \tilde{D}$, it follows that
$D=g(f(D)) \subset g(\tilde{D}) \subset D$, which yields $f(D)=\tilde{D}$.
If we repeat the argument so far for any subsequence of $\{\tilde{D}_n\}$,
then we see that it has the same kernel $f(D)$.
Hence $\tilde{D}_n \to f(D)$, which completes the proof of
$\eqref{cond:ucconv} \Rightarrow \eqref{cond:kerconv}$.
Note that this proof also establishes the latter part of the proposition,
that is, $f_n^{-1} \to f^{-1}$ u.c.\ on $\tilde{D}=f(D)$.

\noindent
$\eqref{cond:kerconv} \Rightarrow \eqref{cond:ucconv}$:
Assume \eqref{cond:kerconv}.
Contrary to our claim, we suppose that \eqref{cond:ucconv} is false.
Since $\{f_n\}$ is a normal family on $D$,
there are at least two subsequences $\{f^{(1)}_n\}_n$ and $\{f^{(2)}_n\}_n$
of $\{f_n\}$ converging to distinct limits $f^{(1)}$ and $f^{(2)}$, respectively,
u.c.\ on $D$.
By the implication~$\eqref{cond:ucconv} \Rightarrow \eqref{cond:kerconv}$
already proven, $\{f^{(k)}_n(D_n)\}_n$, $k=1, 2$, converge in the sense of
kernel convergence, and their limits are the same domain $\tilde{D}$
by our hypothesis~\eqref{cond:kerconv}.
Then the composite ${f^{(1)}}^{-1} \circ f^{(2)}$ is a conformal automorphism on $D$
that is not the identity map $\mathrm{id}_D$.
If $A_1$, ..., $A_N$ are all continua, then we can take the canonical map
$g \colon D \to \hat{D}$, where $\hat{D}$ is a standard slit domain.
In this case, $g \circ {f^{(1)}}^{-1} \circ f^{(2)}$ is also the canonical map on $D$,
and by the uniqueness of canonical map we have
${f^{(1)}}^{-1} \circ f^{(2)}=\mathrm{id}_D$, a contradiction.
If some of $A_k$, say $A_{l+1}$, $A_{l+2}$, ..., $A_N$, are singletons,
then we can easily see, as in \cite[Exercise~15.2.1]{Co95},
that ${f^{(1)}}^{-1} \circ f^{(2)}$ is extended to a conformal automorphism
on $D':=\uhp \setminus \bigcup_{j=0}^{l} A_j$ since the singularities
$A_{l+1}$, $A_{l+2}$, ..., $A_N$ are removable.
Hence ${f^{(1)}}^{-1} \circ f^{(2)}=\mathrm{id}_{D'}$ by the same argument.
Thus in any case we arrive at a contradiction, which yields \eqref{cond:ucconv}.
\end{proof}

Note that, in the proof
of the implication~$\eqref{cond:ucconv} \Rightarrow \eqref{cond:kerconv}$ above,
we do not use the hypothesis that the kernel $D$ of $\{D_n\}$ has the form
$\uhp \setminus \bigcup_{j} A_j$.
We need this hypothesis only for proving the uniqueness of automorphism on $D$.
In Goluzin's proof of \cite[Theorem~V.5.1]{Go69}, this uniqueness follows
from the property of $\Sigma$ applied to ${f^{(1)}}^{-1} \circ f^{(2)}$,
but in our case, $f^{(1)}$ and $f^{(2)}$ extend only to $\Delta(0, L) \cup D$,
not enough to mimic his argument.
This is the reason why we have to suppose that $D=\uhp \setminus \bigcup_{j} A_j$.

\section{Komatu--Loewner equation for a family of growing hulls}
\label{sec:KLeq}

\subsection{Deduction of the Komatu--Loewner equation}
\label{subsec:deduction}

In this subsection, we define the continuity of a family of growing hulls
and deduce the Komatu--Loewner equation for such hulls.

Here is our basic setting throughout this subsection.
Let $\{F_t; 0 \leq t < t_0\}$ be a family of growing hulls
(i.e.\ strictly increasing hulls) in a fixed standard slit domain $D$.
For each $t$, let $g_t \colon D \setminus F_t \to D_t$ be the canonical map,
$a_t:=\hcap^D(F_t)$ and $\slit(t) \in \Slit$ correspond to the slits $\{C_j(t)\}$
of $D_t$.
$C_j(t)$ is sometimes denoted by $C_{j,t}$ as well.
We further define, for $0 \leq s \leq t < t_0$,
\[
g_{t,s}:=g_s \circ g_t^{-1} \colon D_t \to D_s \setminus g_s(F_t \setminus F_s).
\]
Clearly $g_s(F_t \setminus F_s)$ is a hull, and
$g_{t,s}^{-1}$ is the canonical map on $D_s \setminus g_s(F_t \setminus F_s)$.
Moreover for a fixed $t_1 \in (0, t_0)$,
the family $\{(D \setminus F_t, g_t); t \in [0, t_1]\}$
satisfies Assumptions (K.1)--(K.3) in Section~\ref{subsec:kernel}.
Indeed, the constant $L=L_{t_1}$ in (K.1) can be taken so that
$F_{t_1} \cup \bigcup_j C_j \subset B(0, L_{t_1})$.
(K.2) and (K.3) are obvious.
Thus we can apply the theory developed in Section~\ref{subsec:kernel}
to $g_t$ and $g_{t, s}$ over each compact subinterval of $[0, t_0)$.

\begin{figure}
\centering
\includegraphics[width=13cm]{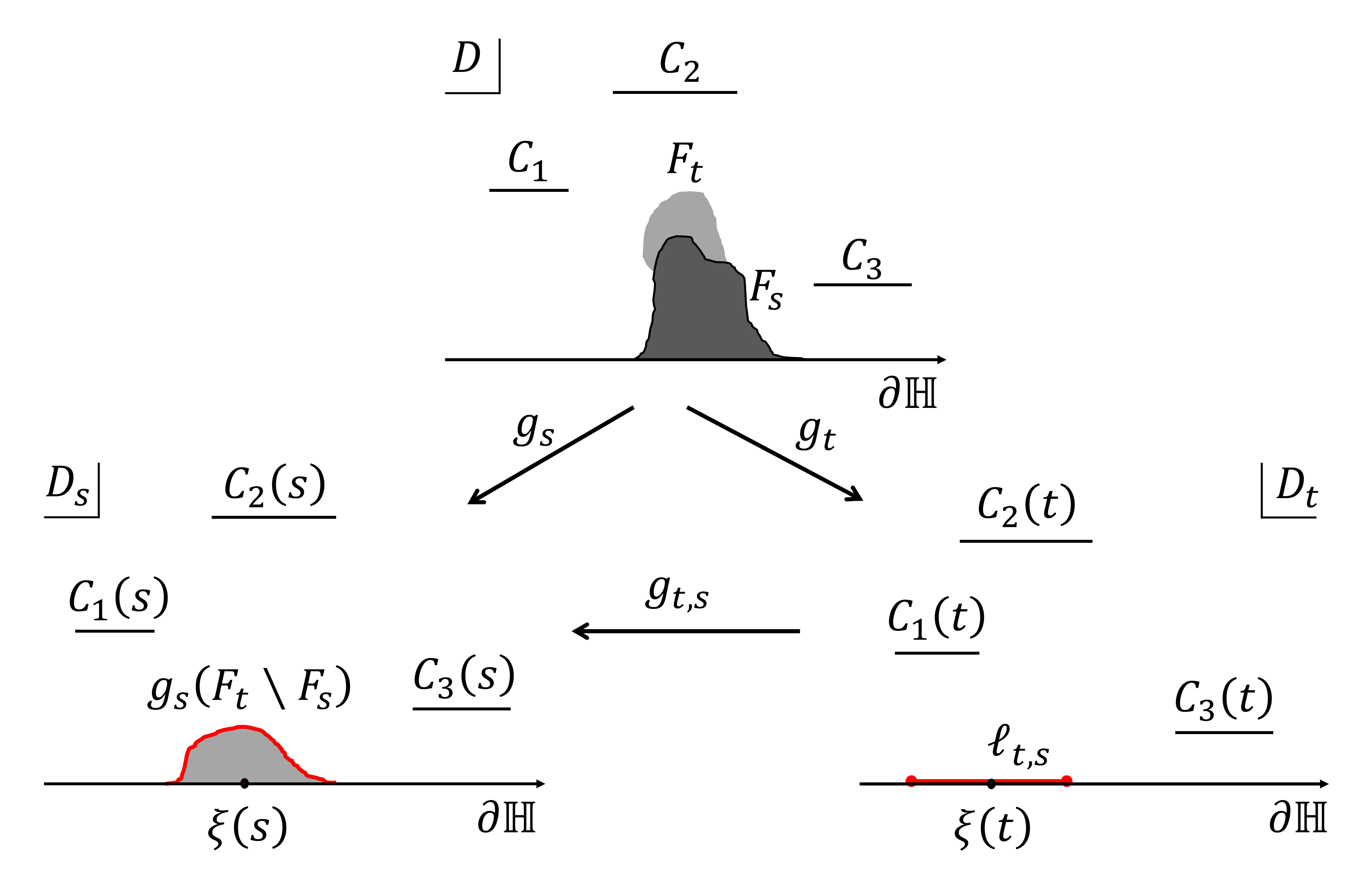}
\caption{Conformal map $g_{t,s}$}
\end{figure}

In what follows, several conditions are imposed on $\{F_t\}$.
If there exists a function $\xi \colon [0, t_0) \to \R$ such that
\eqref{eq:hull_limit} holds for any $t \in [0, t_0)$,
then we call $\xi$ the \emph{driving function of $\{F_t\}$}.
The condition \eqref{eq:hull_limit} is sometimes called the right continuity of $\{F_t\}$
and employed in the existing literature, for example,
\cite[Section~4.1]{La05}, \cite[Section~4]{La06} and \cite[Section~6]{CF18}.
One reason is that, for a family of growing hulls having this property,
we can obtain the Komatu--Loewner equation
in the right derivative sense as in Proposition~\ref{prop:KLright}.
However, it should be noted that we mean a weaker condition
than \eqref{eq:hull_limit} by the ``right continuity'' in Definition \ref{dfn:cont}.

\begin{prop} \label{prop:KLright}
Let $\{F_t\}_{t \in [0,t_0)}$ be a family of growing hulls in $D$
with driving function $\xi \colon [0, t_0) \to \R$.
\begin{enumerate}
\item \label{prop:hcapright}
The half-plane capacity $a_t$ is strictly increasing and right continuous in $t$.
\item \label{prop:ucright}
$g_{t,s}^{-1}(z) \to z$ u.c.\ on $D_s$ as $t \downarrow s$ for any $s \in [0, t_0)$.
\item \label{prop:rdiff}
$g_t(z)$ is right differentiable in $a_t$
for each $z \in D \cup \bigcup_j \partial_p C_j$, and
\begin{equation} \label{eq:KLright}
\frac{\partial^+ g_t(z)}{\partial a_t} = -\pi \Psi_{D_t}(g_t(z), \xi(t)),
\quad g_0(z)=z,\quad t \in [0, t_0).
\end{equation}
Here $\partial^+ g_t(z)/\partial a_t$ denotes the right derivative of $g_t(z)$
with respect to $a_t$.
\end{enumerate}
\end{prop}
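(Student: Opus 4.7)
The three statements will be proved in order, all resting on the comparison estimate \eqref{eq:2ndorder} applied not to $g_t$ itself but to the intermediate canonical map $g_{t,s}^{-1}$ for the ``incremental'' hull $F^{s,t}:=g_s(F_t\setminus F_s)\subset D_s$ as $t\downarrow s$. By construction $g_{t,s}^{-1}$ is the canonical map from $D_s\setminus F^{s,t}$ onto $D_t$, so Proposition~\ref{prop:canonical}~\eqref{prop:can_reflect} supplies the Laurent expansion $g_{t,s}^{-1}(z)=z+c_{s,t}/z+o(z^{-1})$ with $c_{s,t}:=\hcap^{D_s}(F^{s,t})>0$ whenever $F_s\subsetneq F_t$. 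Expanding the identity $g_t=g_{t,s}^{-1}\circ g_s$ at infinity yields the additivity $a_t=a_s+c_{s,t}$, which is the backbone of the whole argument.

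For \eqref{prop:hcapright}, strict monotonicity is immediate from $c_{s,t}>0$. For right continuity I set $r_{s,t}:=\inf\{R>0;\,F^{s,t}\subset B(\xi(s),R)\}$, which tends to zero by the hypothesis \eqref{eq:hull_limit}. Since every $z\in F^{s,t}$ satisfies $\Im z\leq r_{s,t}$, the integral representation \eqref{eq:hcap} of $\hcap^{D_s}(F^{s,t})$ with any fixed $R$ exceeding the extent of the slits of $D_s$ gives the direct estimate $c_{s,t}\leq(4R/\pi)\,r_{s,t}\to 0$, and hence $a_t\downarrow a_s$.

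For \eqref{prop:ucright}, I invoke Proposition~\ref{prop:2ndorder} in the slit domain $D_s$ at the boundary point $\xi_0=\xi(s)\in\partial\uhp$, with $F=F^{s,t}$ and $f_F=g_{t,s}^{-1}$. This gives, for every $z$ with $|z-\xi(s)|>r_{s,t}$,
\[
\bigl|z-g_{t,s}^{-1}(z)-\pi c_{s,t}\Psi_{D_s}(z,\xi(s))\bigr|\leq C(z)\,r_{s,t}\,c_{s,t},
\]
with $C$ locally bounded on $D_s$ (and extended to $\bigcup_j\partial_p C_j(s)$ via Schwarz reflection, as remarked immediately after the proof of Proposition~\ref{prop:2ndorder}). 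Since $c_{s,t}\to 0$ by (i) and $\Psi_{D_s}(\cdot,\xi(s))$ is bounded on every compactum of $D_s$ that avoids $\xi(s)$, both terms on the right vanish uniformly on compacta, giving the u.c.\ convergence $g_{t,s}^{-1}\to\mathrm{id}$.

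For \eqref{prop:rdiff}, the key identity is $g_t(z)-g_s(z)=g_{t,s}^{-1}(g_s(z))-g_s(z)$. Substituting $w=g_s(z)$ into the displayed estimate and dividing by $c_{s,t}=a_t-a_s$ yields
\[
\left|\frac{g_t(z)-g_s(z)}{a_t-a_s}+\pi\Psi_{D_s}(g_s(z),\xi(s))\right|\leq C(g_s(z))\,r_{s,t},
\]
and letting $t\downarrow s$ produces \eqref{eq:KLright} at $t=s$, i.e.\ right differentiability at $s$ with the asserted right derivative. The main technical point to watch is that Proposition~\ref{prop:2ndorder} requires $r_{s,t}\leq r_0$ for a fixed $r_0$ with $B(\xi(s),r_0)\cap\uhp\subset D_s$; such an $r_0$ exists because the slits of $D_s$ are bounded away from $\partial\uhp$, and $r_{s,t}<r_0$ holds for $t$ sufficiently close to $s$, which is precisely what one needs to apply the estimate with a constant $C(\cdot)$ that is independent of $t$.
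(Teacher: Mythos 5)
Your proof is correct and follows essentially the same route as the paper: additivity $a_t-a_s=\hcap^{D_s}(g_s(F_t\setminus F_s))$ together with the probabilistic formula \eqref{eq:hcap} for part \eqref{prop:hcapright}, and Proposition~\ref{prop:2ndorder} applied to the incremental canonical map $g_{t,s}^{-1}$ for parts \eqref{prop:ucright} and \eqref{prop:rdiff}. The paper merely states that \eqref{prop:ucright} and \eqref{prop:rdiff} are immediate consequences of \eqref{prop:hcapright} and Proposition~\ref{prop:2ndorder}; you have supplied exactly the intended details, including the point about $r_{s,t}\le r_0$ for $t$ near $s$ and the extension to $\bigcup_j\partial_p C_j$.
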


\begin{proof}
\eqref{prop:hcapright}
Let $0 \leq s \leq t < t_0$.
We can easily observe that
\[
a_t-a_s=\hcap^{D_s}(g_s(F_t \setminus F_s)).
\]
Since $g_s(F_t \setminus F_s)$ is non-polar (with respect to the ABM on $D_s$)
for $t>s$, the right hand side is positive by \eqref{eq:hcap}.
\eqref{eq:hcap} also implies $\lim_{t \downarrow s}(a_t-a_s)=0$
because $\sup\{\Im z; z \in g_s(F_t \setminus F_s)\} \to 0$ as $t \downarrow s$.

\eqref{prop:ucright} and \eqref{prop:rdiff} are immediate consequences
of \eqref{prop:hcapright} and Proposition~\ref{prop:2ndorder}.
\end{proof}

The left continuity of $a_t$ and left differentiability of $g_t(z)$ do not follow
from \eqref{eq:hull_limit}.
To proceed further, we define the continuity of $\{F_t\}$
as the continuity of $D \setminus F_t$ in the sense of kernel convergence.

\begin{dfn} \label{dfn:cont}
$\{F_t\}_{t \in [0,t_0)}$ is said to be
\emph{(left/right) continuous in $D$ at $t \in [0, t_0)$}
if $D \setminus F_u \to D \setminus F_t$
as $u$ approaches $t$ (from left/right).
\end{dfn}

Such a continuity condition did not appear
in the recent studies \cite{BF08, La06, Dr11, CFR16, CF18},
but it is not new in complex analysis.
Indeed, a similar condition was imposed
when Pommerenke established a version of the radial Loewner equation
in \cite[Section~6.1]{Po75}.
Below we show that Definition~\ref{dfn:cont} works well
even when the domain has multiple connectivity.

\begin{lem} \label{lem:sff_rcont}
If $\{F_t\}$ satisfies \eqref{eq:hull_limit} for some $\xi(s) \in \R$ at $s \in [0, t_0)$,
then it is right continuous in $D$ at $s$.
\end{lem}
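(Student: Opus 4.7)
The plan is to verify the kernel convergence $D \setminus F_u \to D \setminus F_s$ as $u \downarrow s$ directly from Definition~\ref{def:kernel}, using \eqref{eq:hull_limit} and the fact that $g_s$ is a conformal homeomorphism between $D \setminus F_s$ and $D_s$. First I would fix an arbitrary sequence $u_n \downarrow s$ and check that $\{D \setminus F_{u_n}\}$ satisfies Assumption~(K.1) uniformly: since $F_{u_n} \subset F_{u_1}$ and the slits $\{C_j\}$ of $D$ are all bounded, one can pick a single $L$ with $\Delta(0,L)\cap\uhp \subset D \setminus F_{u_n}$ for every $n$, so its kernel is well-defined.

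Next I would establish the two inclusions. Because $\{F_t\}$ is increasing, $F_s \subset F_{u_n}$ for all $n$, so any $z \in F_s$ belongs to every $F_{u_n}$ and therefore admits no neighborhood lying in $D \setminus F_{u_n}$; hence $z$ cannot lie in the (subsequential) kernel, and thus the kernel is contained in $D \setminus F_s$. Moreover $D \setminus F_s$ is itself an unbounded domain in $\uhp$: it is open by the relative closedness $F_s = \uhp \cap \overline{F_s}$, connected as the image $g_s^{-1}(D_s)$ of the standard slit domain $D_s$, and unbounded because $g_s^{-1}$ fixes $\infty$ via the hydrodynamic normalization provided by Proposition~\ref{prop:canonical}.

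The main step is the reverse inclusion: every compact $K \subset D \setminus F_s$ should lie in $D \setminus F_{u_n}$ for all sufficiently large $n$. Here I would exploit \eqref{eq:hull_limit} through $g_s$. The image $g_s(K)$ is a compact subset of $D_s$, hence disjoint from $\partial D_s \supset \partial \uhp$ and in particular from $\xi(s)$; so there is an open neighborhood $U$ of $\xi(s)$ in $\hat{\C}$ with $U \cap g_s(K) = \emptyset$. The sets $\overline{g_s(F_{s+\delta} \setminus F_s)}$ form a decreasing family of compact subsets of $\hat{\C}$ as $\delta \downarrow 0$ (compactness follows from the boundedness of $F_{s+\delta}$ together with the expansion $g_s(z) = z + o(1)$ near $\infty$), and their intersection is $\{\xi(s)\}$ by \eqref{eq:hull_limit}. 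By the finite intersection property there exists $\delta_0 > 0$ with $\overline{g_s(F_{s+\delta} \setminus F_s)} \subset U$ for all $0 < \delta < \delta_0$; applying the bijection $g_s^{-1}$ then gives $(F_{s+\delta} \setminus F_s) \cap K = \emptyset$, and combined with $F_s \cap K = \emptyset$ this yields $K \subset D \setminus F_{s+\delta}$.

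The main, and essentially only, obstacle is this translation between the image-side statement \eqref{eq:hull_limit} in $D_s$ and the preimage-side statement about kernel convergence in $D$; everything else is a direct unwinding of definitions. Since the argument applies to every subsequence of every $u_n \downarrow s$, the common kernel is always $D \setminus F_s$, which by Definition~\ref{def:kernel} is precisely the asserted right continuity of $\{F_t\}$ in $D$ at $s$.
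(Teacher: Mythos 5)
Your argument is correct, but it takes a genuinely different route from the paper. The paper's proof stays entirely on the image side and uses machinery: from \eqref{eq:hull_limit} and Proposition~\ref{prop:KLright}~\eqref{prop:ucright} it gets $D_s \setminus g_s(F_t\setminus F_s)\to D_s$ and $g_{t,s}^{-1}\to\mathrm{id}$ u.c., then applies the kernel theorem~\ref{prop:kerthmH} twice (once to obtain $g_{t,s}\to\mathrm{id}$, hence $g_t^{-1}=g_{t,s}\circ g_s^{-1}\to g_s^{-1}$ u.c., and once more to convert this back into $D\setminus F_t\to D\setminus F_s$). You instead verify Definition~\ref{def:kernel} by hand: the inclusion of the kernel in $D\setminus F_s$ is immediate from monotonicity, and the reverse inclusion is obtained by pushing a compact $K\subset D\setminus F_s$ forward under the bijection $g_s$, separating $g_s(K)$ from $\xi(s)$, and using the decreasing-compacts consequence of \eqref{eq:hull_limit} to force $g_s(F_{s+\delta}\setminus F_s)$ into a small neighborhood of $\xi(s)$. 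This is more elementary — it bypasses normal families, the kernel theorem, and the estimate of Proposition~\ref{prop:2ndorder} underlying Proposition~\ref{prop:KLright}~\eqref{prop:ucright} — at the cost of not producing the u.c.\ convergence $g_t^{-1}\to g_s^{-1}$ as a by-product (which the paper anyway re-derives in Lemma~\ref{lem:cont}). Two minor points to tighten: the compactness of $\overline{g_s(F_{s+\delta}\setminus F_s)}$ is best justified by noting that $g_s(F_{s+\delta}\setminus F_s)$ is a hull in $D_s$ (as the paper records at the start of Section~\ref{subsec:deduction}), rather than by the expansion of $g_s$ at $\infty$ alone, which only controls $g_s$ far away; and in the containment step one should also observe that kernel points necessarily lie in $D$ itself (not merely outside $F_s$), which follows by the same neighborhood argument since $D\setminus F_{u_n}\subset D$.
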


\begin{proof}
By \eqref{eq:hull_limit} and Proposition~\ref{prop:KLright}~\eqref{prop:ucright},
we get the following two convergences as $t \downarrow s$:
\[
D_s \setminus g_s(F_t \setminus F_s) \to D_s,\quad
g_{t,s}^{-1}(z) \to z \ \text{u.c.}
\]
Hence $D_t \to D_s$ and $g_{t,s}(z) \to z$ u.c.\ 
as $t \downarrow s$ by the kernel theorem~\ref{prop:kerthmH}.
Since $g_t^{-1}=g_{t,s} \circ g_s^{-1}$,
it also holds that $g_t^{-1} \to g_s^{-1}$ u.c.\ as $t \downarrow s$.
By the kernel theorem~\ref{prop:kerthmH} again,
we obtain $D \setminus F_t \to D \setminus F_s$.
\end{proof}

\begin{lem} \label{lem:cont}
\begin{enumerate}
\item \label{lem:lcont}
Suppose that $\{F_s\}$ is left continuous in $D$ at $t \in (0, t_0)$.
Then $D_s \to D_t$ as $s \uparrow t$, that is,
$\slit(s)$ is left continuous at $t$.
Moreover, $g_{t,s}^{-1}(z) \to z$ u.c.\ on $D_t$ as $s \uparrow t$,
and $a_s$ is left continuous at $t$.
\item \label{lem:rcont}
Suppose that $\{F_t\}$ is right continuous in $D$ at $s \in [0, t_0)$.
Then $D_t \to D_s$ as $t \downarrow s$, that is,
$\slit(t)$ is right continuous at $s$.
Moreover, $g_{t,s}^{-1}(z) \to z$ u.c.\ on $D_s$ as $t \downarrow s$,
and $a_t$ is right continuous at $s$.
\end{enumerate}
\end{lem}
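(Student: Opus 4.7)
The plan is to prove part~\eqref{lem:lcont}; part~\eqref{lem:rcont} is obtained identically with $t_n \downarrow s$. Fix an arbitrary sequence $s_n \uparrow t$ and apply the kernel theorem (Theorem~\ref{prop:kerthmH}) to $\{g_{s_n}\}$, then identify the limit using the uniqueness of the canonical map (Proposition~\ref{prop:canonical}). Choose $L > 0$ so that $F_t \cup \bigcup_j C_j \subset B(0, L)$; since $F_{s_n} \subset F_t$, Assumption (K.1) holds uniformly in $n$ with this $L$, and (K.2)--(K.3) follow from the hydrodynamic normalization and the boundary behavior of $g_{s_n}$. The left-continuity hypothesis gives $D \setminus F_{s_n} \to D \setminus F_t$ in the kernel sense, and because the constant term of the Laurent expansion of $g_{s_n}$ at $\infty$ vanishes, Corollary~\ref{cor:precptSigma} combined with Montel's theorem (Proposition~\ref{prop:Montel}) shows that $\{g_{s_n}\}$ is a normal family on $D \setminus F_t$.

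Extract any u.c.\ limit $g_\ast := \lim_k g_{s_{n_k}}$. By Proposition~\ref{prop:univlim}, $g_\ast$ is univalent on $D \setminus F_t$, and passing to the limit coefficient-by-coefficient in the Laurent expansion at $\infty$ yields $g_\ast(z) = z + a_\ast/z + o(z^{-1})$ with $a_\ast = \lim_k a_{s_{n_k}}$. Theorem~\ref{prop:kerthmH} then gives $D_{s_{n_k}} \to g_\ast(D \setminus F_t) =: \tilde{D}$ in the kernel sense. The crucial step is to show that $\tilde{D}$ is a standard slit domain; once this is done, Proposition~\ref{prop:canonical} forces $g_\ast = g_t$ and $\tilde{D} = D_t$. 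Each $g_{s_n}$ extends, via Schwarz reflection across $C_j^0$ and the chart $\psi(z) = (z - z_j)^{1/2}$ near each endpoint (as recalled at the end of Section~\ref{subsec:BMD_conf}), to a univalent holomorphic function on an open neighborhood $U_j$ of $\partial_p C_j$. These extensions form a normal family on $U_j$ whose u.c.\ limit agrees with $g_\ast$ by the identity theorem, so $g_\ast$ itself extends univalently to $U_j$. The restriction of $g_\ast$ to $C_j^0$ is therefore injective; and since each $g_{s_{n_k}}(z) \in C_j(s_{n_k})^0$ for $z \in C_j^0$, the image $g_\ast(C_j^0)$ lies on a horizontal line. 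Hence $\tilde{C}_j := g_\ast(C_j)$ is a proper horizontal slit for every $j$, and $\tilde{D} = \uhp \setminus \bigcup_j \tilde{C}_j$ is a standard slit domain.

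The hard part is this last nondegeneracy argument: ensuring no horizontal segment $\tilde{C}_j$ collapses to a point, which relies on pushing the normal-family argument through the Schwarz-reflected and desingularized extensions across each slit. Once $g_\ast = g_t$ has been identified for every u.c.-convergent subsequence, the whole family satisfies $g_{s_n} \to g_t$ u.c.\ on $D \setminus F_t$, which yields $\slit(s_n) \to \slit(t)$ in $\Slit$ (left continuity of $\slit$ at $t$) and $a_{s_n} \to a_t$ via the $z^{-1}$-coefficient. Finally, $g_{t,s}(z) = g_s(g_t^{-1}(z)) \to g_t(g_t^{-1}(z)) = z$ u.c.\ on $D_t$, and applying Theorem~\ref{prop:kerthmH} once more to $\{g_{t,s}\}$ gives $g_{t,s}^{-1}(z) \to z$ u.c.\ on $D_t$ as $s \uparrow t$. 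Part~\eqref{lem:rcont} is proved by the same procedure with $t_n \downarrow s$ and $L$ chosen so that $F_{s + \varepsilon} \subset B(0, L)$ for some small fixed $\varepsilon > 0$.
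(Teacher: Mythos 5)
Your argument is correct and its skeleton coincides with the paper's: establish normality of $\{g_{s_n}\}$ via Corollary~\ref{cor:precptSigma} and Montel's theorem, apply the kernel theorem~\ref{prop:kerthmH}, and identify every subsequential limit with $g_t$ through the uniqueness part of Proposition~\ref{prop:canonical}. Two sub-steps, however, are handled by genuinely different means. For the nondegeneracy of the limiting slits, the paper first extracts a subsequence along which $\slit(s_n)$ converges in $\R^{3N}$ (using the a priori bound $\bigcup_{s \leq t}\bigcup_j C_{j,s} \subset \overline{B(0, 2L_t)}$ coming from Lemma~\ref{lem:K1forIm}), so that the kernel $\tilde{D}$ is visibly a possibly degenerate slit domain, and then rules out degeneration because the limit is a conformal map of the $(N+1)$-connected domain $D \setminus F_t$; you instead push the normal-family argument through the Schwarz-reflected extensions $g^+_{s_n}$ on the rectangles $R$ and invoke Proposition~\ref{prop:univlim} to get injectivity of the limit on $C^0_j$. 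Both work: yours is more hands-on and avoids the (tacit in the paper) fact that conformal equivalences of finitely connected domains preserve nondegeneracy of boundary components, at the price of having to argue that $g_*(D\setminus F_t)$ really equals $\uhp$ minus the limiting segments, which you assert rather than prove. For the left continuity of $a_s$, you read off the $z^{-1}$ Laurent coefficient of $g_{s_n}$ by Cauchy's formula on a fixed circle such as $\lvert z\rvert = 2L_t$, whereas the paper applies the area theorem~\ref{prop:areathm} to $h_s(z)=(2L_t)^{-1}g_{t,s}^{-1}(2L_t z) \in \Sigma$ and bounds $a_t - a_s$ by $\lvert h_s(z)-z\rvert$ plus a tail; your route is shorter and equally valid. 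Your derivation of $g_{t,s}^{-1}(z)\to z$ (compose, then apply the inverse-map clause of Theorem~\ref{prop:kerthmH}) also matches the paper's in substance.

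One imprecision worth fixing: the four extensions $g^{\pm}_{s_n}$, $g^l_{s_n}$, $g^r_{s_n}$ do not glue into a single-valued univalent function on ``an open neighborhood $U_j$ of $\partial_p C_j$'' in $\C$ --- $g^+_{s_n}$ and $g^-_{s_n}$ disagree on $R_+\cup R_-$ in general, since each is obtained by reflecting from one side only. The argument should be run on each chart separately: univalence of the limit of $g^+_{s_n}$ on $R$ (it is non-constant there, so Proposition~\ref{prop:univlim} applies) already gives injectivity on $C^0_j$, and the endpoint charts give convergence of $z_j(s_n)$ and $z^r_j(s_n)$; that is all you need.
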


\begin{proof}
We prove only \eqref{lem:lcont}
because the proof of \eqref{lem:rcont} is quite similar.

Since the family $\{(D \setminus F_s, g_s); s \in [0, t]\}$
satisfies (K.1)--(K.3),
we have $D_s \supset \Delta(0, 2L_t) \cap \uhp$ by Lemma~\ref{lem:K1forIm}.
This implies $\bigcup_{s \in [0,t]} \bigcup_j C_{j,s} \subset \overline{B(0,2L_t)}$,
that is, $\{\slit(s); s \in [0, t]\}$ is bounded.
We can thus take a sequence $\{s_n\}$ with $s_n \uparrow t$ so that
$\widetilde{\slit}:=\lim_{n \to \infty}\slit(s_n)$ exists in $\R^{3N}$.
Though $\widetilde{\slit}$ is not necessary in $\Slit$,
it is obvious from definition that $D_{s_n}$ converges to a slit domain $\tilde{D}$
in the sense of kernel convergence.
Some of the slits of $\tilde{D}$ may degenerate.
Since $D \setminus F_{s_n} \to D \setminus F_t$ by the left continuity of $\{F_s\}$,
we can apply the kernel theorem~\ref{prop:kerthmH} to $\{g_{s_n}\}$
to obtain the limiting conformal map
$\tilde{g}:=\lim_{n \to \infty}g_{s_n} \colon D \setminus F_t \to \tilde{D}$.
Then, all the slits of $\tilde{D}$ must not degenerate, and
$\tilde{g}$ must be the canonical map on $D \setminus F_t$,
which yields $\tilde{g} = g_t$ and $\tilde{D} = D_t$
by the uniqueness in Proposition~\ref{prop:canonical}.
In particular, this limit is independent of the choice of $\{s_n\}$.
We therefore conclude that $D_s \to D_t$ as $s \uparrow t$.

The equivalence between the left continuity of $\{D_s\}$ and that of $\slit(s)$
can be checked easily from definition, and so we omit it.

Since $D \setminus F_s \to D \setminus F_t$ and $D_s \to D_t$ as $s \uparrow t$,
the kernel theorem~\ref{prop:kerthmH} implies $g_s^{-1} \to g_t^{-1}$ u.c.,
which in turn yields $g_{t,s}(z)^{-1} \to z$ u.c. as $s \uparrow t$.
To show the left continuity of $a_s$, we regard $h_s(z):=(2L_t)^{-1}g_{t,s}^{-1}(2L_tz)$
as an element of $\Sigma$ by Schwarz's reflection.
Writing the Laurent series expansion around infinity as
\[
h_s(z)=z+\frac{a_t-a_s}{4L_t^2}\frac{1}{z}+\sum_{n \geq 2}\frac{c_{n,s}}{z^n},
\]
we get, from the Cauchy--Schwarz inequality
and the area theorem~\ref{prop:areathm},
\begin{align*}
&\left\lvert h_s(z)-z \right\rvert
= \left\lvert \frac{a_t-a_s}{4L_t^2}\frac{1}{z}
	+ \sum_{n \geq 2}\frac{c_{n,s}}{z^n} \right\rvert
\geq \left\lvert \frac{a_t-a_s}{4L_t^2}\frac{1}{z} \right\rvert
	- \left\lvert \sum_{n \geq 2}\frac{c_{n,s}}{z^n} \right\rvert \\
&\geq \frac{a_t-a_s}{4L_t^2}\frac{1}{\lvert z \rvert}
	- \left(\sum_{n \geq 2}\lvert c_{n,s} \rvert^2\right)^{1/2}
	\left(\sum_{n \geq 2}\lvert z \rvert^{-2n}\right)^{1/2}
\geq \frac{a_t-a_s}{4L_t^2}\frac{1}{\lvert z \rvert}
	- \frac{\lvert z \rvert^{-4}}{1-\lvert z \rvert^{-2}}.
\end{align*}
Since $\lim_{s \uparrow t}\lvert h_s(z)-z \rvert = 0$ for any $z \in D_t$,
we have $\lim_{s \uparrow t}(a_t - a_s)=0$.
\end{proof}

By Lemma~\ref{lem:cont}, $a_t$ is a strictly increasing continuous function
on $[0, t_0)$ if $\{F_t\}$ is continuous.
We can thus reparametrize $\{F_t\}$ so that $a_t$ is differentiable in $t$.
As a particular case, we say that $\{F_t\}$ obeys
the \emph{half-plane capacity parametrization in $D$}
if $a_t=\hcap^D(F_t)=2t$.

\begin{lem} \label{lem:jointcont}
Suppose that $\{F_t\}$ is continuous in $D$ at every $t \in [0, t_0)$.
Then $g_t(z)$ is jointly continuous in
$(t,z) \in [0, t_0) \times (D \cup \bigcup_j \partial_p C_j)$.
\end{lem}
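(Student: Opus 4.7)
The plan is to reduce joint continuity at $(t_\ast, z_\ast)$ to two ingredients: uniform-on-compacta convergence $g_{t_n} \to g_{t_\ast}$ for every $t_n \to t_\ast$, and continuity of the fixed map $g_{t_\ast}$ in the spatial variable. These combine through the standard decomposition
\[
g_{t_n}(z_n) - g_{t_\ast}(z_\ast) = \bigl(g_{t_n}(z_n) - g_{t_\ast}(z_n)\bigr) + \bigl(g_{t_\ast}(z_n) - g_{t_\ast}(z_\ast)\bigr),
\]
in which the first summand is absorbed by the u.c.\ bound on a small closed neighborhood of $z_\ast$ and the second vanishes by continuity of $g_{t_\ast}$.

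To obtain the u.c.\ convergence on $D \setminus F_{t_\ast}$, I would invoke Theorem~\ref{prop:kerthmH}. The hypothesis gives $D \setminus F_{t_n} \to D \setminus F_{t_\ast}$, and Lemma~\ref{lem:cont} yields simultaneously $D_{t_n} \to D_{t_\ast}$ and $a_{t_n} \to a_{t_\ast}$. The kernel $D \setminus F_{t_\ast} = \uhp \setminus (\bigcup_j C_j \cup F_{t_\ast})$ has the form required by the kernel theorem (the slits as compact continua and $F_{t_\ast}$ as a hull), so its implication $(ii)\Rightarrow(i)$ produces a univalent limit $g^\ast$ with $g_{t_n} \to g^\ast$ u.c. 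Comparing Laurent coefficients at infinity and using $a_{t_n} \to a_{t_\ast}$ yields the hydrodynamic normalization of $g^\ast$, after which the uniqueness in Proposition~\ref{prop:canonical} forces $g^\ast = g_{t_\ast}$. When $z_\ast \in D \setminus F_{t_\ast}$, a small closed ball around $z_\ast$ lies in $D \setminus F_{t_n}$ for $n$ large by kernel convergence, and the decomposition above applies directly.

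For $z_\ast \in \bigcup_j \partial_p C_j$, the work lies in transporting the u.c.\ convergence through the Schwarz-reflection extensions from Section~\ref{subsec:BMD_conf}. Across an open side $C_j^0$ the extension $g_t^+$ on a fixed rectangle $R$ is given on $R_-$ by the reflection formula $g_t^+(z) = \overline{g_t(\bar z + 2 i y_j)} + 2 i y_j(t)$; since $y_j(t_n) \to y_j(t_\ast)$ by Lemma~\ref{lem:cont} and $g_{t_n} \to g_{t_\ast}$ u.c.\ on compact subsets of $R_+$, the family $\{g_{t_n}^+\}$ is locally bounded on $R$ and converges u.c.\ on $R_+$ and on $R_-$ to $g_{t_\ast}^+$. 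Normality via Montel's theorem~\ref{prop:Montel} together with the identity theorem then upgrades this to u.c.\ convergence on all of $R$, and the two-term decomposition delivers joint continuity. The slit endpoints are handled identically through the $t$-independent uniformization $\psi(w) = (w - z_j)^{1/2}$ on $B(0, \sqrt{\varepsilon})$, whose reflection constant is again the continuous function $y_j(t)$. The main obstacle is precisely this slit-boundary step: the kernel theorem is stated only for domains in $\uhp$, so u.c.\ convergence cannot be transported across the reflection by a single invocation and must instead be pieced together from the interior convergence and continuity of the slit parameter in each local chart.
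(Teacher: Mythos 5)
Your proof is correct and follows essentially the same route as the paper: joint continuity on $[0,t_0)\times D$ from the locally uniform convergence $g_s\to g_t$ (obtained via Lemma~\ref{lem:cont} and the kernel theorem~\ref{prop:kerthmH}), and then extension to $\bigcup_j\partial_p C_j$ through the Schwarz-reflection extensions, normality, and the identity theorem. The only cosmetic difference is that you use the explicit reflection formula to get convergence on $R_-$ directly, whereas the paper obtains convergence on all of $R$ at once from the normal-family argument after noting $g^+_t(C^0_j)\subset C_{j,t}\subset B(0,2L_{t_1})$ to secure local boundedness along $C^0_j$ itself --- a point you should also record, since locally uniform convergence on $R_+\cup R_-$ alone does not bound the family on compacta of $R$ meeting $C^0_j$.
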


\begin{proof}
$g_t(z)$ is jointly continuous on $[0, t_0) \times D$
since $g_s \to g_t$ u.c.\ on $D$ as $s \to t$ for any $t \in [0, t_0)$
by Lemma~\ref{lem:cont}.
Recall from Section~\ref{subsec:BMD_conf} that the canonical map $g_t=f_{F_t}$
can be extended holomorphically to $\bigcup_j \partial_p C_j$.
For a fixed $j$, let $g^+_t$ be the extended map of $g_t$ from
\[
R_+=\{z \in \C ;\; x_j < \Re z < x^r_j,\; y_j < \Im z <y_j + \delta\}
\]
to $R=R_+ \cup C_j \cup R_-$ across $C^0_j$.
Here we use the notation in Section~\ref{subsec:BMD_conf}.
For a fixed $t_1\in (0, t_0)$, $\{g^+_t\}_{t \in [0, t_1]}$ is locally bounded
on $R_+ \cup R_-$ by the local boundedness of $\{g_t\}$,
and also on $C^0_j$ since $g^+_t(C^0_j) \subset C_{j,t} \subset B(0, 2L_{t_1})$.
Hence $\{g^+_t\}_{t \in [0, t_1]}$ is a normal family on $R$.
Any sequence $\{g^+_{s_n}\}$, $s_n \to t \in [0, t_1]$,
converging u.c.\ when $n \to \infty$ has the same limit,
because it converges to $g_t$ on $R_+$ and so the identity theorem applies
to $\lim_{k \to \infty}g^+_{s_n}$.
Thus $g^+_s \to g^+_t$ u.c.\ on $R$ as $s \to t \in [0, t_0)$,
which yields the joint continuity of $g_t(z)$ on $[0, t_0) \times C^+_j$.
The joint continuity of $g_t(z)$ on $[0, t_0) \times (C^-_j \cup \{z_j, z^r_j\})$
is obtained in the same way.
\end{proof}

We now arrive at our main result.
Here, the dot $\dot{ }$ denotes the $t$-derivative.

\begin{thm} \label{thm:gKLeq}
Suppose that $\xi(t)$ is continuous and
$a_t$ is strictly increasing and differentiable over $[0, t_0)$.
Then the following are equivalent:
\begin{enumerate}
\item \label{cond:nicehull}
$\{F_t\}_{t \in [0, t_0)}$ is a family of continuously growing hulls in $D$
with driving function $\xi$ and half-plane capacity $a_t$.
\item \label{cond:gKLeq}
$\slit(t)$ and $g_t(z)$ solve the ODEs
\begin{gather} 
\frac{d}{dt}z_{j}(t) = -\pi \dot{a}_t \Psi_{\slit(t)}(z_{j}(t),\xi(t)), \quad
\frac{d}{dt}z^{r}_{j}(t) = -\pi \dot{a}_t \Psi_{\slit(t)}(z^{r}_{j}(t),\xi(t)),
\label{eq:gKLs} \\
\frac{d}{dt}g_{t}(z) = -\pi \dot{a}_t \Psi_{\slit(t)}(g_{t}(z),\xi(t)), \quad g_{0}(z)=z \in D.
\label{eq:gKL}
\end{gather}
\end{enumerate}
\end{thm}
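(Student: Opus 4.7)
The plan is to establish the two implications separately. Direction (i) $\Rightarrow$ (ii) is the main content; the converse is an application of the uniqueness theory for the initial value problem recalled in Section~\ref{subsec:initKL}.

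For (i) $\Rightarrow$ (ii), I would start from Proposition~\ref{prop:KLright}, whose conclusion \eqref{eq:KLright} already provides the right-derivative identity with respect to $a_t$ for every $z \in D \cup \bigcup_j \partial_p C_j$. Since $a_t$ is differentiable in $t$, the chain rule rewrites this as
\[
\frac{\partial^+ g_t(z)}{\partial t} = -\pi \dot a_t \Psi_{\slit(t)}(g_t(z), \xi(t)).
\]
Under the full (two-sided) kernel continuity of $\{F_t\}$, Lemma~\ref{lem:cont} yields continuity of $\slit(t)$ and Lemma~\ref{lem:jointcont} yields joint continuity of $(t,z) \mapsto g_t(z)$ on $[0,t_0) \times (D \cup \bigcup_j \partial_p C_j)$; together with the continuity of $\xi$ and the (locally Lipschitz, and hence) continuous dependence of $\Psi_{\slit}(\cdot, \xi_0)$ on $(\xi_0, \slit)$, this makes the right-hand side continuous in $t$. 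The classical theorem that a continuous function with a continuous right derivative is continuously differentiable then upgrades the identity to the ODE \eqref{eq:gKL} (applied componentwise). Equation \eqref{eq:gKLs} is obtained identically, specialised to $z=z_j$ and $z=z_j^r$: the holomorphic extensions of $g_t$ across $\partial_p C_j$ constructed in Section~\ref{subsec:BMD_conf} provide the identifications $g_t(z_j)=z_j(t)$ and $g_t(z_j^r)=z_j^r(t)$ via the boundary correspondence, and Lemma~\ref{lem:jointcont} explicitly covers these boundary points.

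For (ii) $\Rightarrow$ (i), I would reparametrise by $\tau = a_t/2$, which turns \eqref{eq:gKLs}--\eqref{eq:gKL} into the standard Komatu--Loewner system \eqref{eq:KLs2}--\eqref{eq:KL2} driven by the continuous function $\tilde{\xi}(\tau) := \xi(t(\tau))$. The existence and uniqueness theory of \cite{CF18} summarised in Section~\ref{subsec:initKL} guarantees that this system admits a unique solution $(\tilde{\slit}(\tau), \tilde{g}_\tau)$, and the associated family $\{\tilde{F}_\tau\}$ is a family of growing hulls with driving function $\tilde{\xi}$ and $\hcap^D(\tilde{F}_\tau)=2\tau$. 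Uniqueness of the canonical map in Proposition~\ref{prop:canonical} forces $\tilde{g}_\tau = g_{t(\tau)}$ and hence $\tilde{F}_\tau = F_{t(\tau)}$, so that $\{F_t\}$ has driving function $\xi$, and Lemma~\ref{lem:sff_rcont} then yields its right continuity in the kernel sense. Left continuity is obtained by coupling the kernel theorem with the uniqueness of canonical maps: since $\slit(t)$ is continuous by \eqref{eq:gKLs} and $g_t(z)$ is continuous in $t$ by \eqref{eq:gKL}, any subsequential kernel limit of $\{D \setminus F_{s_n}\}$ with $s_n \uparrow t$ admits, through Theorem~\ref{prop:kerthmH}, a univalent limit of $\{g_{s_n}\}$, which must agree with $g_t$ on $D \setminus F_t$ by the identity theorem; the uniqueness of canonical maps then forces the kernel limit itself to be $D \setminus F_t$.

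The chief obstacle is the upgrade from right to full differentiability in the forward direction. Proposition~\ref{prop:KLright} uses only the weaker existence condition \eqref{eq:hull_limit} and therefore gives no information about $g_t(z)$ as $s \uparrow t$; it is precisely the two-sided kernel continuity of $\{F_t\}$, funnelled through Lemmas~\ref{lem:cont} and \ref{lem:jointcont}, that renders the right-hand side of \eqref{eq:gKL} continuous in $t$ and so permits the upgrade. This is what makes kernel convergence the right notion of continuity for the present problem.
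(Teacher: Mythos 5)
Your treatment of \eqref{eq:gKL} at interior points and your converse direction agree in substance with the paper's proof: the right derivative comes from Proposition~\ref{prop:KLright}, the upgrade to a genuine $t$-derivative uses the continuity of $\slit(t)$, $g_t(z)$ and $\xi(t)$ supplied by Lemmas~\ref{lem:cont} and \ref{lem:jointcont}, and the implication \eqref{cond:gKLeq}$\Rightarrow$\eqref{cond:nicehull} reduces to the initial-value-problem theory of \cite{CF18} together with the kernel theorem. (For left continuity the paper argues slightly more directly, deducing the joint continuity of $f_t=g_t^{-1}$ from the ODE $\dot f_t(z)=2\pi\Psi_{\slit(t)}(z,\xi(t))/g_t'(f_t(z))$ and then applying Theorem~\ref{prop:kerthmH}; your subsequential-limit argument reaches the same conclusion.)

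The genuine gap is in your derivation of the slit equation \eqref{eq:gKLs}. The identifications $g_t(z_j)=z_j(t)$ and $g_t(z^r_j)=z^r_j(t)$ are false in general. The extended map carries the doubled boundary $\partial_p C_j$ homeomorphically onto $\partial_p C_{j}(t)$, but the endpoints of the image slit $C_j(t)$ are the images of the two critical points of the reflected extensions $g^{\pm}_t$ on $\partial_p C_j$ (the points where the image curve reverses direction), and these critical points move with $t$; they coincide with $z_j$ and $z^r_j$ only at $t=0$. One must therefore write $z_j(t)=g_t(\tilde z_j(t))$ with a $t$-dependent preimage $\tilde z_j(t)\in\partial_p C_j$, control the motion of $\tilde z_j(t)$ via the implicit function theorem applied to $(g^{\pm}_t)'(z)=0$, and use the vanishing of $(g^{\pm}_t)'$ at $\tilde z_j(t)$ to kill the chain-rule term. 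This requires the $t$-differentiability and joint continuity of $(g^+_t)'(z)$ and $(g^+_t)''(z)$, which is precisely the content of Lemmas~2.1, 2.2 and Theorem~2.3 of \cite{CF18}; accordingly, the second half of the paper's proof is devoted to verifying the analogue of \cite[Lemma~2.1]{CF18} for a family of growing hulls, in particular establishing \eqref{eq:KLforderiv} for $(g^+_t)'$ by means of Proposition~\ref{prop:2ndorder} and Cauchy's integral formula. Your proposal replaces all of this with an incorrect boundary identification, so \eqref{eq:gKLs} is not proved.
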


\begin{proof}
It is sufficient to prove the theorem when $a_t=2t$,
for the general case is established by the time-change of $a_t$.
Under this half-plane capacity parametrization,
the ODEs \eqref{eq:gKLs} and \eqref{eq:gKL} reduce to
\eqref{eq:KLs2} and \eqref{eq:KL2}, respectively.

\noindent
$\eqref{cond:gKLeq} \Rightarrow \eqref{cond:nicehull}$:
Assume \eqref{cond:gKLeq}.
The conditions \eqref{eq:hull_limit} and $a_t=2t$ are already mentioned
in Section~\ref{subsec:initKL}.
To show the continuity of $\{F_t\}$,
observe that the continuity of $\slit(t)$ implies that of $\{D_t\}$
in the sense of kernel convergence.
It suffices to prove the joint continuity of $f_t(z):=g_t^{-1}(z)$ in $(t,z)$
because then $f_s \to f_t$ u.c.\ as $s \to t$ and
the kernel theorem~\ref{prop:kerthmH} implies
$D \setminus F_s \to D \setminus F_t$.

As a solution to the ODE~\eqref{eq:KL2}, $g_t(z)$ is jointly continuous.
We then see from Cauchy's integral formula that $g_t'(z)$ is also jointly continuous.
Note that it is non-vanishing since $g_t$ is univalent.
The joint continuity of $f_t(z)$ now follows from the fact that
it is a solution to the ODE
\[
\dot{f_t}(z)=2\pi f_t'(z)\Psi_{\slit(t)}(z, \xi(t))
	=\frac{2\pi}{g_t'(f_t(z))}\Psi_{\slit(t)}(z, \xi(t)).
\]

\noindent
$\eqref{cond:nicehull} \Rightarrow \eqref{cond:gKLeq}$:
We have already seen that \eqref{eq:KL2} holds
for $z \in D \cup \bigcup_j \partial_p C_j$
in the right derivative sense in Proposition~\ref{prop:KLright}~\eqref{prop:rdiff}.
Since $\xi(t)$, $\slit(t)$ and $g_t(z)$ are continuous in $t$
by \eqref{cond:nicehull} and Lemmas~\ref{lem:cont} and \ref{lem:jointcont},
\eqref{eq:KL2} holds as a genuine ODE
by the same proof as that of Theorems~9.8 and 9.9 in \cite{CFR16}.

To establish \eqref{eq:KLs2},
we can use the same method as in \cite[Section~2]{CF18}.
Hence it suffices to prove \cite[Lemma~2.1]{CF18} in our case,
because the proof of Lemma~2.2 and Theorem~2.3 in \cite{CF18}
depends only on this lemma, not on whether $F_t$ is a simple curve.
Assertions (i), (ii), (iv) and (v) of \cite[Lemma~2.1]{CF18} follows from
Cauchy's intergral formula and Lemma~\ref{lem:jointcont}.
The proofs of (iii), (vi) and (vii) are quite similar, and so we prove only (iii) here.

For a fixed $j$, let $g^+_t$ be the extended map of $g_t$ from $R_+$ to $R$
as in the proof of Lemma~\ref{lem:jointcont}.
We can check that $\eta_t(z, \xi_0):=\Psi_{\slit(t)}(g_t(z), \xi_0)$
can also be extended from $R_+$ to a holomorphic function on $R$,
which is denoted by $\eta^+_t(z, \xi_0)$,
and that $\eta^+_t(z, \xi_0)$ is continuous in $(t, z, \xi_0)$. 
By Proposition~\ref{prop:2ndorder} and Cauchy's integral formula we have,
for $0 \leq s < t < t_0$,
\[
\lvert (g^+_s)'(z)-(g^+_t)'(z)-2\pi (t-s) (\eta^+_s(z, \xi(s)))' \rvert
\leq 2(t-s) C' r_{s,t},
\]
where $r_{s,t}:=\inf\{R>0; g_s(F_t \setminus F_s) \subset B(\xi(s), R)\}$
and $C'$ is a constant.
Since $r_{s,t} \to 0$ as $t \downarrow s$, we obtain
\begin{equation} \label{eq:KLforderiv}
\frac{\partial^+(g^+_t)'(z)}{\partial t}=-2\pi (\eta^+_s(g^+_s(z), \xi(s)))'.
\end{equation}
The right hand side of \eqref{eq:KLforderiv} is jointly continuous in $(t,z)$
in view of (ii) of \cite[Lemma~2.1]{CF18}, and thus
the left hand side becomes the genuine derivative by \cite[Lemma~4.3]{La05}.
Therefore, $(g^+_t)'(z)$ is differentiable in $t$ and
$\partial_t(g^+_t)'(z)$ is continuous in $(t, z)$.

In this way, we can prove the assertions corresponding to \cite[Lemma~2.1]{CF18}
and then Lemma~2.2 and Theorem~2.3 of \cite{CF18}
tell us that \eqref{eq:KLs2} holds under
our assumption~\eqref{cond:nicehull} of the theorem.
\end{proof}

Every assertion in Section~\ref{subsec:deduction} remains valid
for the upper half-plane $\uhp$ in place of the standard slit domain $D$
by replacing $\Psi_D$ with $\Psi_{\uhp}=-\frac{1}{\pi}\frac{1}{z-\xi_0}$.

\subsection{Transformation of the chains, half-plane capacities and driving functions}
\label{subsec:trans}

From Theorem~\ref{thm:gKLeq}, the Komatu--Loewner evolution
defined in Section~\ref{subsec:initKL} proves to be
nothing but a family of continuously growing hulls
with continuous driving function and differentiable half-plane capacity $2t$.
In this subsection, we check that such nice properties on growing hulls
are independent of the domain $D$ and conformally invariant.
The transformation of the chains described in Section~\ref{sec:intro}
is thus always possible.
We take over the notations in Section~\ref{subsec:deduction}.

\begin{prop} \label{prop:cont}
$\{F_t\}$ is continuous with continuous driving function
and differentiable half-plane capacity in $D$
if and only if it has the same property in $\uhp$.
\end{prop}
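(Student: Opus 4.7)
My plan hinges on the conformal factorization $\tilde g_t = \Phi_t \circ g_t$ on $D \setminus F_t$, where $\Phi_t := \tilde g_t \circ g_t^{-1}$ is the unique conformal map from the standard slit domain $D_t$ onto $\uhp \setminus \tilde g_t(\bigcup_j C_j)$ with hydrodynamic normalization $\Phi_t(w) = w - c_t/w + O(w^{-2})$ and $c_t = a_t - \tilde a_t$. Each of the three properties in $D$ will correspond to its $\uhp$-analogue via this factorization; by symmetry (using $g_t = \Phi_t^{-1} \circ \tilde g_t$) I will only address the direction $D \Rightarrow \uhp$ in detail.

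For the continuity, my argument proceeds directly from the kernel definition. Given $D \setminus F_u \to D \setminus F_t$, since $F_u \subset D$ is disjoint from the slits, $K \cap F_u = (K \cap D) \cap F_u$ for any compact $K \subset \uhp \setminus F_t$. Under the proposition's hypotheses---which via Theorem~\ref{thm:gKLeq} imply that the hull family is smooth and $F_t$ stays bounded away from the slits locally in $t$---the intersection $K \cap D$ can be handled by the $D$-kernel hypothesis, giving $K \cap F_u = \emptyset$ eventually. A case analysis on whether candidate kernel points lie in $D$ or on a slit then confirms the kernel is exactly $\uhp \setminus F_t$. The converse is immediate because $D \setminus F_t$-compacta are automatically $\uhp \setminus F_t$-compacta.

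For the driving function, given the continuity equivalence, the kernel theorem~\ref{prop:kerthmH} together with Lemma~\ref{lem:jointcont} (valid in $\uhp$ by the closing remark of Section~\ref{subsec:deduction}) yield joint continuity of $g_t$ and $\tilde g_t$ on their extended domains. Hence $\Phi_t = \tilde g_t \circ g_t^{-1}$ is jointly continuous, and the Schwarz reflection afforded by the hydrodynamic normalization extends $\Phi_t$ across $\partial \uhp \setminus \{\xi(t)\}$ preserving joint continuity. From $\tilde g_t = \Phi_t \circ g_t$ on $D \setminus F_t$, passing to the intersection limit in~\eqref{eq:hull_limit} yields $\tilde \xi(t) = \Phi_t(\xi(t))$, whose continuity follows from that of $\Phi_t$ and $\xi$. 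The reverse uses $\xi(t) = \Phi_t^{-1}(\tilde \xi(t))$.

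The main obstacle is transferring differentiability of the half-plane capacity. Since $\tilde a_t = a_t - c_t$, the task is to show $c_t$ is differentiable in $t$ whenever $a_t$ is. My plan is to reparametrize by $\tilde a$ (continuous and strictly increasing by Lemma~\ref{lem:cont} applied in $\uhp$); in this parametrization Theorem~\ref{thm:gKLeq} in $\uhp$ yields the classical Loewner ODE, so $\tilde g_{\tilde a}$ is $C^1$ in $\tilde a$ and the image slits $\tilde g_{\tilde a}(\bigcup_j C_j)$ vary $C^1$ in $\tilde a$. A careful analysis using Proposition~\ref{prop:2ndorder} applied to the composite increment $\tilde g_{t+\delta,t}^{-1} = \Phi_{t+\delta} \circ g_{t+\delta,t}^{-1} \circ \Phi_t^{-1}$ (matching leading Laurent coefficients at infinity) then shows $c(\tilde a)$ is $C^1$ in $\tilde a$, so $a(\tilde a) = \tilde a + c(\tilde a)$ is $C^1$ with positive derivative. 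The chain rule transfers differentiability from $a_t$ to $\tilde a_t = \tilde a(a_t)$. The most delicate step is establishing the $C^1$-regularity of $c(\tilde a)$, where Proposition~\ref{prop:2ndorder} plays the decisive role in controlling the Laurent coefficients of $\Phi_t$ as $t$ varies.
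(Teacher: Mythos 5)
Your treatment of the continuity and of the driving function coincides with the paper's own proof: your $\Phi_t=\tilde g_t\circ g_t^{-1}$ is exactly the map $\iota_t=g^0_t\circ\iota\circ g_t^{-1}$ used there, the transferred driving function is $\Phi_t(\xi(t))$, and its continuity follows from the joint continuity of $\Phi_t$ obtained from Lemma~\ref{lem:cont} and the kernel theorem~\ref{prop:kerthmH}. Your kernel argument for the continuity is also the paper's: points of a compact $K\subset\uhp\setminus F_t$ lying on or near the slits are absorbed by small balls around $\bigcup_j C_j$ that avoid the hulls, and the remainder is handled by the kernel hypothesis in $D$. (A minor caveat: $K\cap D$ need not be compact, so one should cover $K$ by a compact subset of $D\setminus F_t$ together with finitely many such balls; your appeal to Theorem~\ref{thm:gKLeq} to separate $F_t$ from the slits is heavier than necessary but harmless.)

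The gap is in the half-plane capacity step, which you yourself flag as the delicate one. Matching the Laurent coefficients at infinity in $\tilde g_{t+\delta,t}^{-1}=\Phi_{t+\delta}\circ g_{t+\delta,t}^{-1}\circ\Phi_t^{-1}$ yields, for the $1/w$-coefficient, $\tilde a_{t+\delta}-\tilde a_t=(a_{t+\delta}-a_t)-(c_{t+\delta}-c_t)$, which is nothing but the identity $c=a-\tilde a$ restated; it gives no control on the regularity of $c(\tilde a)$. What is actually required is a comparison of $\hcap^{D_t}$ and $\hcap^{\uhp}$ of the \emph{same} shrinking hull $g_t(F_{t+\delta}\setminus F_t)$, together with the distortion of $\hcap^{\uhp}$ under $\Phi_t$ near $\xi(t)$; this is the content of the capacity comparison theorem \cite[Theorem~A.1]{CF18} combined with the argument behind \eqref{eq:hcaptrans}, giving $\dot{\tilde a}_t=\Phi_t'(\xi(t))^2\,\dot a_t$, and it is precisely what the paper invokes at this point. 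Proposition~\ref{prop:2ndorder} alone operates within a single reference domain; to extract the ratio of the two capacity increments one must compare the singular parts of the two expansions at the driving point $\tilde\xi(t)$ (as in \cite[(2.7)]{LSW01}), not the coefficients at infinity. Finally, your concluding chain-rule step tacitly needs $1+c'(\tilde a)>0$, which again comes from the identification $da/d\tilde a=\Phi_t'(\xi(t))^{-2}>0$ rather than from monotonicity of $a$ in $\tilde a$ alone.
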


\begin{proof}
Let $\iota \colon D \hookrightarrow \uhp$ be the inclusion map,
$g^0_t \colon \uhp \setminus F_t \to \uhp$, $t \in [0, t_0)$, be the canonical map
and $\iota_t := g^0_t \circ \iota \circ g_t^{-1}$.
By Schwarz's reflection, $\iota_t \colon D_t \hookrightarrow \uhp$ extends
to a conformal map from $D_t \cup \Pi D_t \cup \partial \uhp$
onto $g^0_t(D) \cup \Pi g^0_t(D) \cup \partial \uhp$.
It is especially a homeomorphism between these domains.

Assume that $\{F_t\}_{t \in [0,t_0)}$ is continuous
with continuous driving function $\xi$ and differentiable half-plane capacity $a_t$
in $D$.
We set $U(t):=\iota_t(\xi(t))$.
It holds that
\begin{align*}
\bigcap_{\delta>0}\overline{g^0_t(F_{t+\delta} \setminus F_t)}
&=\bigcap_{\delta>0}\overline{\iota_t \circ g_t(F_{t+\delta} \setminus F_t)} 
=\iota_t\left(\bigcap_{\delta>0}\overline{g_t(F_{t+\delta} \setminus F_t)}\right) \\
&=\iota_t(\{\xi(t)\}) =\{U(t)\}.
\end{align*}
Hence $\{F_t\}$ has driving function $U(t)$ in $\uhp$.
Next we fix $t \in (0, t_0)$.
For any $z \in \bigcup_j C_j$, there is some $r>0$ such that
$\overline{B(z, r)} \subset \uhp \setminus F_t \subset \uhp \setminus F_s$
for all $s \leq t$.
Combining this with the assumption that $D \setminus F_s \to D \setminus F_t$
as $s \uparrow t$,
we can conclude that $\uhp \setminus F_s \to \uhp \setminus F_t$.
This means the left continuity of $\{F_t\}$ in $\uhp$.
By Lemma~\ref{lem:cont}, $\{g^0_t\}$ is continuous in
the sense of uniform convergence on compacta.
$\iota_t(z)$ is then jointly continuous
in $(t,z) \in [0, t_0) \times (D_t \cup \Pi D_t \cup \partial \uhp)$.
Hence $U(t)=\iota_t(\xi(t))$ is continuous in $t$.
Finally the differentiability of $a^0_t:=\hcap^{\uhp}(F_t)$ is obtained from
the capacity comparison theorem \cite[Theorem~A.1]{CF18}.
Thus $\{F_t\}$ is continuous with continuous driving function $U$
and differentiable half-plane capacity $a^0_t$ in $\uhp$.

The proof of the converse implication is quite similar, and we omit it.
\end{proof}

Proposition~\ref{prop:cont} implies that,
if $\{F_t\}$ is continuous with continuous driving function and
differentiable half-plane capacity in one standard slit domain,
then we get the Komatu--Loewner equation on every standard slit domain.
In almost the same way,
we can also show that this condition is conformally invariant.
More precisely, \cite[(2.7)]{LSW01} combined with
the capacity comparison theorem \cite[Theorem~A.1]{CF18} yields the following:

\begin{thm} \label{thm:hcaptrans}
Denote by $D$ either a standard slit domain or the upper half-plane $\uhp$.
Denote also by $\tilde{D}$ a standard slit domain or $\uhp$,
but the degree of connectivity of $\tilde{D}$ can be different from that of $D$.
Let $\{F_t\}_{t \in [0, t_0)}$ be a family of continuously growing hulls
with continuous driving function $\xi$ and differentiable half-plane capacity $a_t$
in $D$.
Suppose that a domain $V \subset D$ and a univalent function
$h \colon V \hookrightarrow \tilde{D}$ satisfy the following conditions:
\begin{enumerate}
\item \label{cond:inclusion} $\bigcup_{t \in [0, t_0)}F_t \subset V$;
\item $\partial V \cap \partial \uhp$ is locally connected;
\item $h$ maps $\partial V \cap \partial \uhp$ into $\partial \uhp$, that is,
$\lim_{y \downarrow 0}\Im h(x+iy)=0$ for all $x \in \partial V \cap \partial \uhp$.
\end{enumerate}
Under these assumptions,
$\{h(F_t)\}$ is a family of continuously growing hulls in $\tilde{D}$.
Further let $g_t$ and $\tilde{g}_t$ be the canonical maps on $D \setminus F_t$ and
$\tilde{D} \setminus h(F_t)$, respectively, and set
$h_t := \tilde{g}_t \circ h \circ g_t^{-1}$ with the domain of definition
being $g_t(V \setminus F_t) \subset \uhp$.
By Schwarz's reflection, $h_t$ is extended to be holomorphic on
\[
g_t(V \setminus F_t) \cup \Pi g_t(V \setminus F_t) \cup \left(\partial \uhp
\setminus (\overline{D_t \setminus g_t(V \setminus F_t)})\right)\subset \C.
\]
$h_t(\xi(t))$ is then the continuous driving function of $\{h(F_t)\}$.
Moreover, $\tilde{a}_t:=\hcap^{\tilde{D}}(h(F_t))$ is differentiable with
\begin{equation} \label{eq:hcaptrans}
\dot{\tilde{a}}_t=h_t'(\xi(t))^2\dot{a}_t, \quad t \in [0, t_0).
\end{equation}
\end{thm}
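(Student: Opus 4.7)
The plan is to reduce to the upper half-plane via Proposition~\ref{prop:cont}, apply the classical half-plane transformation rule \cite[(2.7)]{LSW01} there, and finally translate the conclusion back to $D$ and $\tilde D$ using the capacity comparison theorem \cite[Theorem~A.1]{CF18}. Concretely, Proposition~\ref{prop:cont} applied to $\{F_t\}$ in $D$ yields that $\{F_t\}$ is continuously growing in $\uhp$ with continuous driving function $\xi^0(t)$ and differentiable half-plane capacity $a^0_t := \hcap^{\uhp}(F_t)$. Let $g^0_t \colon \uhp \setminus F_t \to \uhp$ and $\tilde g^0_t \colon \uhp \setminus h(F_t) \to \uhp$ denote the corresponding canonical maps in $\uhp$, and set $h^0_t := \tilde g^0_t \circ h \circ (g^0_t)^{-1}$.

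Next I would verify that $\{h(F_t)\}$ is a continuous family of growing hulls in $\uhp$: boundedness of $h(F_t)$ follows from the continuous extension of $h$ to $\partial V \cap \partial \uhp$ guaranteed by hypotheses~(ii)--(iii); simple connectedness of $\uhp \setminus h(F_t)$ follows from univalence of $h$ on $V$ together with the observation that each slit of $\tilde D$ is a contractible subset of $\uhp \setminus h(F_t)$; and $\uhp \setminus h(F_s) \to \uhp \setminus h(F_t)$ in the kernel sense transfers from $\uhp \setminus F_s \to \uhp \setminus F_t$ through $h$ and its local inverses. By Schwarz reflection $h^0_t$ extends holomorphically across a neighborhood of $\xi^0(t)$. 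Using the identity $\tilde g^0_t(h(F_{t+\delta}) \setminus h(F_t)) = h^0_t(g^0_t(F_{t+\delta} \setminus F_t))$ together with the continuity of $h^0_t$ at $\xi^0(t)$, condition \eqref{eq:hull_limit} for $\{h(F_t)\}$ in $\uhp$ holds with driving function $h^0_t(\xi^0(t))$. Applying \cite[(2.7)]{LSW01} then yields the $\uhp$-version
\[
\dot{\tilde a}^0_t = (h^0_t)'(\xi^0(t))^2 \, \dot a^0_t,
\]
where $\tilde a^0_t := \hcap^{\uhp}(h(F_t))$.

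Finally I translate back to $D$ and $\tilde D$. Writing $\iota_t := g^0_t \circ \iota \circ g_t^{-1}$ (the conformal inclusion appearing in the proof of Proposition~\ref{prop:cont}) and analogously $\tilde\iota_t$ on the $\tilde D$-side, one has the factorization $h^0_t = \tilde\iota_t \circ h_t \circ \iota_t^{-1}$. The capacity comparison theorem \cite[Theorem~A.1]{CF18} produces the first-order relations $\dot a^0_t = \iota_t'(\xi(t))^2 \, \dot a_t$ and $\dot{\tilde a}^0_t = \tilde\iota_t'(h_t(\xi(t)))^2 \, \dot{\tilde a}_t$ at the driving point; combined with the chain rule $(h^0_t)'(\xi^0(t)) = \tilde\iota_t'(h_t(\xi(t))) \, h_t'(\xi(t)) \, \iota_t'(\xi(t))^{-1}$, the $\iota_t, \tilde\iota_t$ factors cancel pairwise in the squared derivative and leave $\dot{\tilde a}_t = h_t'(\xi(t))^2 \, \dot a_t$. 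Proposition~\ref{prop:cont} applied in reverse transfers the continuity, the continuity of the driving function, and the differentiability of the half-plane capacity from $\uhp$ back to $\tilde D$. The main obstacle is the bookkeeping in this last step: one has to extract from \cite[Theorem~A.1]{CF18} precisely the squared-derivative factor at the driving point (rather than at infinity) so that it matches the chain-rule contribution; a secondary point is the purely topological verification that $h(F_t)$ is genuinely a $\uhp$-hull, which is routine given the boundary-behavior hypotheses~(i)--(iii).
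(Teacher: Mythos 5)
Your proposal is correct and follows essentially the same route as the paper: the paper's (very terse) proof likewise obtains \eqref{eq:hcaptrans} by combining \cite[(2.7)]{LSW01} with the capacity comparison theorem \cite[Theorem~A.1]{CF18} (via the argument of \cite[Theorem~6.8]{CF18}), and handles the continuity of $\{h(F_t)\}$ and of the driving function $h_t(\xi(t))$ by repeating the proof of Proposition~\ref{prop:cont} with $\iota$ replaced by $h$. The only quibble is attributional: the relations $\dot a^0_t=\iota_t'(\xi(t))^2\dot a_t$ and $\dot{\tilde a}^0_t=\tilde\iota_t'(h_t(\xi(t)))^2\dot{\tilde a}_t$ do not come from \cite[Theorem~A.1]{CF18} alone (which compares capacities of the \emph{same} set in two domains) but require \cite[(2.7)]{LSW01} as well, to account for the distortion by the conformal maps $\iota_t$, $\tilde\iota_t$ — exactly the point you flag as the ``main obstacle,'' and one your chain-rule cancellation handles correctly.
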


\begin{figure}
\centering
\includegraphics[width=12cm]{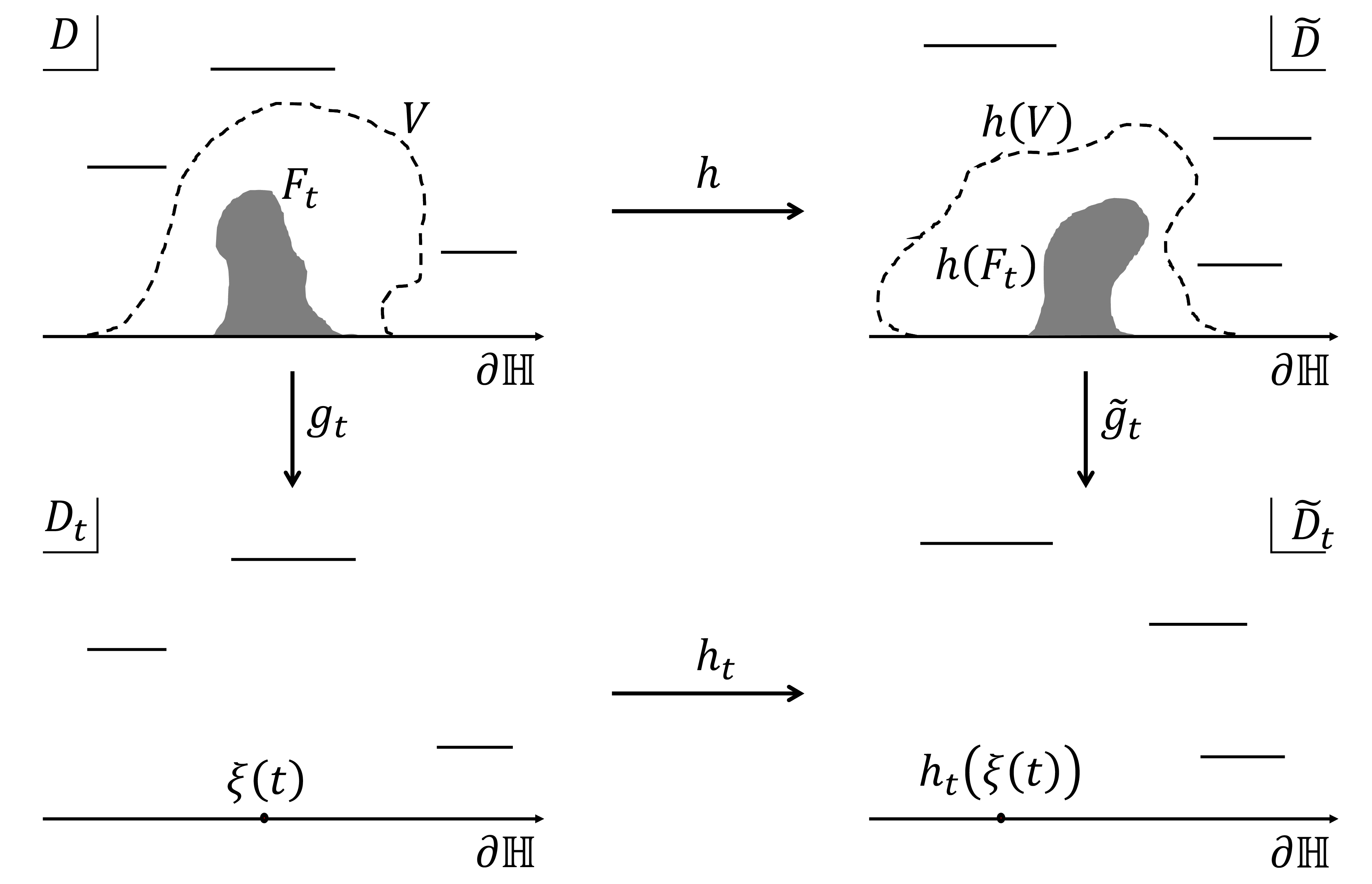}
\caption{Conformal maps $h$ and $h_t$}
\end{figure}

\begin{proof}
\eqref{eq:hcaptrans} can be shown in the same way as the proof of
\cite[Theorem~6.8]{CF18} by using the capacity comparison theorem in it.
Note that $h_t'(\xi(t)) \neq 0$ because $h_t$ is univalent.
The rest of the assertion can be shown in the same way as the proof of
Proposition~\ref{prop:cont} except that $\iota$ is replaced by $h$.
\end{proof}

We note that the degrees of connectivity of $D$ and $\tilde{D}$ can be different
in Theorem~\ref{thm:hcaptrans}.
Thus Theorems~\ref{thm:gKLeq} and \ref{thm:hcaptrans} establish
the transformation of Komatu--Loewner chains
under any possible conformal transformation.
More precisely, if $\{F_t\}$ is a Komatu--Loewner evolution $D$ driven by $\xi$,
then $\{h(F_t)\}$ in Theorem~\ref{thm:hcaptrans} is a family
of continuously growing hulls on $\tilde{D}$, and
we can reparametrize $\{F_t\}$ so that $\{h(F_t)\}$ obeys the
half-plane capacity parametrization on $\tilde{D}$ by setting
\begin{equation} \label{eq:hcaprep}
\check{F}_t:=h(F_{\tilde{a}^{-1}(2t)}),\ \check{g}_t:=\tilde{g}_{\tilde{a}^{-1}(2t)},\
\check{\xi}(t):=h_{\tilde{a}^{-1}(2t)}(\xi(\tilde{a}^{-1}(2t))).
\end{equation}
Now $(\check{g}_t, \check{F}_t)_{t \in [0, \tilde{a}_{t_0-}/2)}$ is
a Komatu--Loewner evolution in $\tilde{D}$ driven by $\check{\xi}$.
Note that the half-plane capacity is bounded if the hull is bounded
by \eqref{eq:hcap}.
Hence, the time-change $\tilde{a}_{\cdot}/2$ maps a compact subinterval
of $[0, t_0)$ onto a compact.
In particular, if $\bigcup_{t \in [0, t_0)} F_t$ is bounded,
then $\tilde{a}_{t_0-}<\infty$.

Finally, let $\{F_t\}$ be an $\skle_{\alpha, b}$ defined
at the end of Section~\ref{subsec:initKL}.
By Theorems~5.8 and 5.12 of \cite{CF18} and Theorem~\ref{thm:gKLeq}$,
\{F_t\}$ is a family of continuously growing hulls on $D$
driven by the solution $\xi(t)$ of the SDEs~\eqref{eq:driv} and \eqref{eq:slit}
with the half-plane capacity $a_t=2t$.
Under the setting of Theorem~\ref{thm:hcaptrans} on $D$, $\tilde{D}$, $V$ and $h$,
$\{h_t(F_t)\}$ becomes a family of continuously growing hulls in $\tilde{D}$
with the driving function $\tilde{\xi}(t)=h_t(\xi(t))$ and
with the half-plane capacity $\tilde{a}_t=2h_t'(\xi(t))^2t$.
Consequently, $\tilde{\slit}(t)$
with $D(\tilde{\slit}(t))=\tilde{g}_t(\tilde{D} \setminus h(F_t))$
and $\tilde{g}_t$ satisfy the ODEs~\eqref{eq:gKLs} and \eqref{eq:gKL}
for these choices of $\tilde{\xi}(t)$ and $\tilde{a}_t$ by Theorem~\ref{thm:gKLeq}
again. Denote these ODEs by \eqref{eq:gKLs}' and \eqref{eq:gKL}'.

In a similar manner to the proof of \cite[Theorem~6.9]{CF18},
one can then derive from \eqref{eq:gKL}'
the following semimartingale decomposition of the driving process
$\tilde{\xi}(t)=h_t(\xi(t))$ of $\{h(F_t)\}$:
\begin{align}
d\tilde{\xi}(t)&=
h_t'(\xi(t))\left(b(\slit(t)-\widehat{\xi}(t))+b_{\bmd}(\xi(t), \slit(t))\right)\,dt
\notag \\
&+\frac{1}{2}h_t''(\xi(t))\left(\alpha(\slit(t)-\widehat{\xi}(t))^2-6\right)\,dt
\label{eq:trans_semimart} \\
&-h_t'(\xi(t))^2b_{\bmd}(\tilde{\xi}(t), \tilde{\slit}(t))\,dt
+h_t'(\xi(t))\alpha(\slit(t)-\widehat{\xi}(t))\,dB_t, \quad t < T_V, \notag
\end{align}
where $T_V=\zeta \wedge \sup\{t>0; F_t \subset V\}$.
Here $b_{\bmd}(\xi_0, D)$ for $\xi_0 \in \partial \uhp$ and
for a standard slit domain $D$ is defined by
\[
b_{\bmd}(\xi_0, D) := 2\pi\lim_{z \to 0}
\left(\Psi_D(z, \xi_0)-\frac{1}{\pi}\frac{1}{ z-\xi_0}\right)
(=2\pi\mathbf{H}_D(\xi_0, \xi_0)),
\]
where $\mathbf{H}_D$ is the function appearing in \eqref{eq:PK}.
We let
\[
b_{\bmd}(\xi_0, \uhp)=0, \quad \xi_0 \in \partial \uhp
\]
accordingly.
We also write $b_{\bmd}(\xi_0, D)$ as $b_{\bmd}(\xi_0, \slit)$ in terms of
$\slit=\slit(D) \in \Slit$ for $D$.
We set $b_{\bmd}(\slit)=b_{\bmd}(0, \slit)$ and
call it the \emph{BMD domain constant} of $D=D(\slit)$.
By \cite[Lemma~6.1]{CF18}, the BMD domain constant $b_{\bmd}(\slit)$
is homogeneous with degree $-1$ and locally Lipschitz continuous so that
\[
b_{\bmd}(\xi_0, \slit)=b_{\bmd}(\slit-\widehat{\xi_0}),
\quad \slit \in \Slit, \xi_0 \in \partial \uhp.
\]

\eqref{eq:trans_semimart} is derived from the same computation
as in the proof of \cite[Theorem~6.9]{CF18} based on a generalized It\^o formula
\cite[Exercise~IV.3.12]{RY99}.
As pointed out in \cite[Remark~2.9]{CFS17},
one needs more assumptions than stated in \cite{RY99} to verify the formula.
Accordingly the following property of $h_t$ is necessary
to legitimize \eqref{eq:trans_semimart}:
\begin{enumerate}
\item[(C)] $h_t(z)$, $h_t'(z)$ and $h_t''(z)$ are jointly continuous
in $(t,z)$ for $z$ in a neighborhood of $\xi(t)$ in $\C$.
\end{enumerate}
We here prove Property (C) in a way similar to the proof
of Lemma~\ref{lem:jointcont}.

\begin{proof}[Proof of Property \rm{(C)}]
Fix $t \in [0, t_0)$.
Since $g_s \to g_t$ u.c.\ on $D \setminus F_t$,
we have $g_s \to g_t$ u.c.\ on $\Pi (D \setminus F_t)$
for the maps $g_s$ extended by Schwarz's reflection.
Since $\{g_s\}_s$ is a normal family on
$(D \setminus F_t) \cup\Pi (D \setminus F_t)
\cup (\partial \uhp \setminus \overline{F_t})$
by Corollary~\ref{cor:precptSigma} and Montel's theorem~\ref{prop:Montel},
$g_s \to g_t$ u.c.\ on this domain,
which we can check by the identity theorem
as in the proof of Lemma~\ref{lem:jointcont}.
Hence we can take a bounded open subset $U$ of $\C$ so that
\[
\xi(s) \in U \subset g_s(V \setminus F_s) \cup \Pi g_s(V \setminus F_s) \cup
\left(\partial \uhp \setminus (\overline{D_s \setminus g_s(V \setminus F_s)})\right),
\quad s \in [t_-, t_+],
\]
for some $0 \leq t_- \leq t < t_+ <t_0$.

We observe from the same argument as for $g_s$ that
$g_s^{-1} \to g_t^{-1}$ uniformly on $U$ as $s$ tends to $t$ in $[t_-, t_+]$
and that $\tilde{g}_s \to \tilde{g}_t$ u.c.\ on
$(\tilde{D} \setminus h(F_t)) \cup \Pi(\tilde{D} \setminus h(F_t))
\cup \left( \partial \uhp \setminus (\overline{\tilde{D} \setminus h(F_t)}) \right)$.
Thus the composite $h_s=\tilde{g}_s \circ h \circ g_s^{-1}$
converges to $h_t$ as $s \to t$ in $[t_-, t_+]$ uniformly on $U$.
As a consequence, Cauchy's integral formulae for $h_t'$ and $h_t''$
yield Property (C).
\end{proof}

Using the formula~\eqref{eq:trans_semimart} along with the ODEs~\eqref{eq:gKLs}'
and \eqref{eq:gKL}',
we arrive at the following theorem in exactly the same manner as
the proof of \cite[Theorem~6.11]{CF18}:

\begin{thm}[Conformal invariance of Chordal $\skle_{\sqrt{6},-b_{\bmd}}$]
\label{cor:locality}
Let $D$, $\tilde{D}$, $V$ and $h$ be as in Theorem~\ref{thm:hcaptrans}
except for Condition \eqref{cond:inclusion} for $V$.
For any $\skle_{\sqrt{6}, -b_{\bmd}}$ $\{F_t\}_{t \in [0, \zeta)}$ in $D$,
we set $T_V:=\zeta \wedge \sup\{t>0; F_t \subset V\}$.
Then, $\{\check{F}_t\}$ defined by \eqref{eq:hcaprep} is
$\skle_{\sqrt{6}, -b_{\bmd}}$ in $\tilde{D}$ up to $\tilde{a}(T_V-)/2$.
\end{thm}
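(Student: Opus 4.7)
The plan is to substitute $\alpha\equiv \sqrt{6}$ and $b\equiv -b_{\bmd}$ into the semimartingale decomposition~\eqref{eq:trans_semimart}, observe that the two ``drift obstructions'' cancel, and then rewrite the remaining SDE in the new time parameter $u=\tilde{a}_t/2$ to recognize an $\skle_{\sqrt{6},-b_{\bmd}}$ in $\tilde D$. Concretely, with $\alpha(\slit-\widehat{\xi})^2-6=0$ and $b(\slit-\widehat{\xi})+b_{\bmd}(\xi,\slit)=0$, the first two lines on the right of~\eqref{eq:trans_semimart} vanish identically, so \eqref{eq:trans_semimart} reduces, for $t<T_V$, to
\[
d\tilde{\xi}(t)=-h_t'(\xi(t))^2 b_{\bmd}(\tilde{\xi}(t),\tilde{\slit}(t))\,dt
+\sqrt{6}\,h_t'(\xi(t))\,dB_t.
\]

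Next I would change the time variable to the half-plane capacity parameter of $\{h(F_t)\}$ in $\tilde{D}$. Since $\dot{\tilde{a}}_t=2h_t'(\xi(t))^2$ by~\eqref{eq:hcaptrans} (using $\dot{a}_t=2$), the map $t\mapsto u=\tilde{a}_t/2$ is a strictly increasing $C^1$-diffeomorphism onto $[0,\tilde{a}(T_V-)/2)$ and its inverse $\tau(u)$ satisfies $\dot\tau(u)=1/h_{\tau(u)}'(\xi(\tau(u)))^2$. Substituting in the drift gives $-b_{\bmd}(\check{\xi}(u),\check{\slit}(u))\,du$. For the martingale term, the continuous local martingale
\[
M_t:=\sqrt{6}\int_0^t h_s'(\xi(s))\,dB_s
\]
has quadratic variation $\langle M\rangle_t=6\int_0^t h_s'(\xi(s))^2\,ds=3\tilde{a}_t=6\,(\tilde{a}_t/2)$, so by the Dambis--Dubins--Schwarz theorem (applied via a standard time-change argument) $\check{B}_u:=M_{\tau(u)}/\sqrt{6}$ is a one-dimensional standard Brownian motion adapted to the time-changed filtration. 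Altogether,
\[
d\check{\xi}(u)=-b_{\bmd}(\check{\xi}(u),\check{\slit}(u))\,du+\sqrt{6}\,d\check{B}_u,
\quad 0\le u<\tilde{a}(T_V-)/2.
\]

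For the slit component I would argue in parallel: the slit endpoints of $D_s=D(\slit(s))$ relative to $\tilde{g}_s$ evolve, by Theorem~\ref{thm:gKLeq} applied in $\tilde{D}$ to the family of continuously growing hulls $\{h(F_t)\}$ from Theorem~\ref{thm:hcaptrans}, according to the ODE
\[
\frac{d}{ds}\tilde{\slit}_l(s)=\tfrac12\dot{\tilde{a}}_s\,b_l(\tilde{\xi}(s),\tilde{\slit}(s))
=h_s'(\xi(s))^2\,b_l(\tilde{\xi}(s),\tilde{\slit}(s)).
\]
Composing with $\tau$ and using the chain rule yields $\frac{d}{du}\check{\slit}_l(u)=b_l(\check{\xi}(u),\check{\slit}(u))$, which is precisely the slit equation~\eqref{eq:slit}. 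Combined with the $\check{\xi}$-equation, $(\check{\xi},\check{\slit})$ is a (strong) solution to the SDE system~\eqref{eq:driv}--\eqref{eq:slit} with $\alpha=\sqrt{6}$ and $b=-b_{\bmd}$ up to the explosion time of the $\tilde{D}$-system, and its lifetime includes the interval $[0,\tilde{a}(T_V-)/2)$. By definition this means $\{\check{F}_u\}$ is an $\skle_{\sqrt{6},-b_{\bmd}}$ in $\tilde{D}$ up to $\tilde{a}(T_V-)/2$, as claimed.

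The main obstacles I expect are bookkeeping rather than conceptual. First, one must verify that \eqref{eq:trans_semimart} is legitimately applicable here---this is exactly what Property~(C) on $h_t$ (jointly continuous $h_t,h_t',h_t''$ near $\xi(t)$), already proved in the excerpt, is for; so I would simply cite it rather than reprove it. Second, some care is needed to check that the time-change $\sigma(t)=\tilde{a}_t/2$ and the resulting filtration $\mathcal{F}_{\tau(u)}$ are such that $\check{B}$ is a genuine Brownian motion on $[0,\tilde{a}(T_V-)/2)$; this follows because $\sigma$ is absolutely continuous with $\dot\sigma>0$ and $\tau$ is a stopping-time change for each fixed $u$. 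Finally, I would note explicitly that the coefficient $-b_{\bmd}(\slit)$ is locally Lipschitz and homogeneous of degree $-1$ as required by the definition of SKLE, so the solution identified above is the $\skle_{\sqrt{6},-b_{\bmd}}$ of Section~\ref{subsec:initKL}.
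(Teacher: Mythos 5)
Your proposal is correct and follows essentially the same route as the paper: the paper's proof consists precisely of substituting $\alpha\equiv\sqrt{6}$, $b\equiv-b_{\bmd}$ into \eqref{eq:trans_semimart} so that the first two drift terms cancel (via $b_{\bmd}(\xi_0,\slit)=b_{\bmd}(\slit-\widehat{\xi_0})$ and $\alpha^2-6=0$), then performing the time change $u=\tilde{a}_t/2$ together with a Dambis--Dubins--Schwarz argument and the time-changed slit ODE, exactly as in the proof of Theorem~6.11 of \cite{CF18} to which the paper defers. Your write-up simply makes explicit the computation that the paper cites by reference, including the needed appeal to Property (C) and the local Lipschitz/homogeneity hypotheses on $-b_{\bmd}$.
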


Theorem~\ref{cor:locality} extends \cite[Theorem~4.2]{CFS17}
in which case $\tilde{D}=\uhp$ and $h$ is the inclusion map from $D$ into $\uhp$.
It also extends \cite[Theorem~6.11]{CF18} in which case
$h$ is the canonical map $\Phi_A$ from $D \setminus A$ for any hull $A$ in $D$
and $\tilde{D}=\Phi_A(D \setminus A)$.
The special case of \cite[Theorem~6.11]{CF18} where $D=\tilde{D}=\uhp$
and accordingly $b_{\bmd}=0$ was discovered
by Lawler, Schramm and Werner~\cite{LSW01, LSW03} and shown recently
in \cite{CFS17} more rigorously.
Such a property of $\sle_6$ has been called its \emph{locality}
under a phrase that \emph{$\sle_6$ does not feel the boundary before hitting it}.
Theorem~\ref{cor:locality} resolves some of the problems posed
in \cite[Section~5]{CFS17} as well.

\section*{Acknowledgments}
I would like to thank my supervisor Masanori Hino
for suggesting me to read \cite{CFR16} and
for many helpful conversations on my research.
I also wish to express my gratitude to Professors Masatoshi Fukushima and
Roland Friedrich for answering my questions kindly and
giving me a lot of valuable comments,
and to the referee for a careful reading of the paper.




\end{document}